\newtheorem{theo}{Theorem}[section]
\newtheorem{lem}[theo]{Lemma}
\newtheorem{prop}[theo]{Proposition}
\newtheorem{cor}[theo]{Corollary}
\renewcommand{\P}{{\mathbb P}}
\newcommand{\B}{{\bf B}}
\newcommand{\e}{{\text{e}}}
\theoremstyle{remark}
\begin{document}

\baselineskip=14pt

\title[Joint and double coboundaries of commuting contractions]
{Joint and double coboundaries \\ of commuting contractions}

\author{Guy Cohen}
\address{Department of Electrical 
Engineering, Ben-Gurion University, Beer-Sheva, Israel}
\email{guycohen@bgu.ac.il}

\author{Michael Lin}
\address{Department of Mathematics, Ben-Gurion University, Beer-Sheva, Israel}
\email{lin@math.bgu.ac.il}

\subjclass[2010]{Primary: 47A35, 47A10, 37A05, 37A30. Secondary: 37A45, 47B15}
\keywords{commuting contractions, double coboundaries, joint coboundaries, $\mathbb Z^2$ 
actions, maximal spectral type, ergodic circle rotations, Diophantine approximation, 
joint spectrum}

\begin{abstract}
Let $T$ and $S$ be commuting contractions on a Banach space $X$. The elements of 
$(I-T)(I-S)X$ are called {\it double coboundaries}, and the elements of 
$(I-T)X \cap (I-S)X$ are called {\it joint cobundaries}. For $U$ and $V$ the unitary 
operators induced on $L_2$ by commuting invertible measure preserving transformations
which generate an aperiodic $\mathbb Z^2$-action, 
we show that there are joint coboundaries in $L_2$ which are not double coboundaries. 
We prove that if $\alpha$,$\beta \in (0,1)$ are irrational, with $T_\alpha$ and $T_\beta$ 
induced on $L_1(\mathbb T)$ by the corresponding rotations, then there are joint 
coboundaries in $C(\mathbb T)$ which are not measurable double cobundaries (hence not double 
coboundaries in $L_1(\mathbb T)$).
\end{abstract}

\maketitle
\vspace*{-0.8 cm}

\section{Introduction}

Let $\alpha$ be irrational, and let $\theta_\alpha x =x+\alpha \mod 1$ for $x \in[0,1)$. 
Then $\theta_\alpha$ preserves Lebesgue's measure, and the operator $T_\alpha h=h\circ \theta$
defines an invertible isometry on all the spaces $L_p[0,1)$, $\ 1\le p \le \infty$.
Motivated by Euler's formal approach to Fourier series, Wintner \cite{Wi} studied  the 
existence of solutions $g \in L_1$ of the equation $(I-T_\alpha)g=f$ for a given $f \in L_1$ 
(or $L_2$). The translation $\theta_\alpha$ corresponds to the rotation 
$z= \e^{2\pi i x} \to  \e^{2\pi i(x+\alpha)}=\e^{2\pi i\alpha}z$ of the unit circle $\mathbb T$. 
This rotation is minimal (all orbits are dense in $\mathbb T$), since $\alpha$ is irrational. 

Gottschalk and Hedlund \cite[p. 135]{GH} proved that  if $\theta$ is a minimal homeomorphism
of a compact Hausdorff space $K$, then a continuous function $ f$ is of the form 
$f=g-g\circ\theta $ for some continuous $g$ if and only if 
$\sup_n \|\sum_{k=0}^{n-1}f \circ \theta^k \|_{C(K)} < \infty$. Browder \cite{Br} proved
that if $T$ is a power-bounded operator on a reflexive Banach space $X$, then
\begin{equation} \label{browder}
y\in (I-T)X \qquad \text{if and only if} \qquad \sup_n \|\sum_{k=0}^{n-1} T^k y\| <\infty.
\end{equation}
Lin and Sine \cite[Theorem 7]{LS} proved (\ref{browder}) for {\it contractions} of $L_1$.

When the equation $(I-T)x=y$ (for $y$ given) has a solution, i.e. $y \in (I-T)X$,
 $y$ is called a {\it coboundary}. Note that when $T$ is induced by a measure preserving 
transformation and $f \in L_p$, the solution of $(I-T)g=f$ may be in a larger space (e.g. 
$f \in (I-T)L_1$), or even measurable and non-integrable (and then $f$ is called a 
{\it measurable} coboundary).

Recently, Adams and Rosenblatt \cite{AR1} studied the following problem: let $(\Omega,\P)$
be a standard probability space; given $f \in L_p(\P)$, is there some ergodic invertible
measure preserving transformation $\theta$ such that $f =g -g\circ\theta$ for some $g$,
and what are the integrability properties of $g$?

We refer to the introduction of \cite{CL} for additional discussion
of developments following the results of Gottschalk-Hedlund and of Browder.
\smallskip

A two-dimensional extension of Browder's result was obtained by the present authors in 
\cite[Theorem 3.1]{CL}:
Let $T$ and $S$ be {\it commuting} contractions on a reflexive Banach space $X$. Then
\begin{equation} \label{CL}
y \in (I-T)(I-S)X \qquad \text{if and only if} \qquad 
\sup_n \big\|\sum_{k=0}^{n-1}\sum_{j=0}^{n-1}T^kS^jy\big\| <\infty.
\end{equation}
The elements of $(I-T)(I-S)X$ were called in \cite{CL} {\it double coboundaries}. 
Clearly double coboundaries are in $(I-T)X \cap (I-S)X$ (i.e. are {\it joint (common) 
coboundaries}).  

The paper deals with the existence of joint coboundaries  (of commuting
contractions) which are not double coboundaries.
We mention that Adams and Rosenblatt \cite{AR} studied the {\it existence} of 
joint coboundaries of {\it non-commuting} contractions. 
\smallskip

Double and joint cobundaries can be interpreted in terms of rates of convergence in 
mean ergodic theorems. Denote $A_n(T):= \frac1n\sum_{k=0}^{n-1} T^k$. Then $A_n(T)x \to 0$
if and only if $x \in \overline{(I-T)X}$, and Browder's theorem means that the rate is
$1/n$ if and only if $x$ is a coboundary. When $X$ is reflexive, 
$(1/n^2)\sum_{j=0}^{n-1}\sum_{k=0}^{n-1} T^jS^k = A_n(T)A_n(S)$ converges strongly, and
$A_n(T)A_n(S)y \to 0$ if and only if $y \in \overline{(I-T)X+(I-S)X}$ (see Proposition
\ref{squares-se}); (\ref{CL}) means that the rate is $1/n^2$ if and only if $y$ is a 
double coboundary. The question becomes whether rates of $1/n$ in the convergence to zero of 
$A_n(T)y$ and $A_n(S)y$ imply the rate of $1/n^2$ for $A_n(T)A_n(S)y \to 0$.
\smallskip

Let $P$ be an ergodic Markov operator on a general state space $(\mathcal S,\Sigma)$,
with invariant probability $\pi$, and let $(\xi_k)$ be the induced stationary Markov chain
on $(\Omega, \mathcal B,\mathbb P_\pi)$. Gordin and Lifshits \cite{GL} proved a central limit 
theorem for $f(\xi_k)$ when $f$ is a coboundary of $P$ on $L_2(\mathbb S,\pi)$. Extensions
to central limit theorems for random fields lead to the study of double coboundaries,
which play a role in obtaining martingale-coboundary decompositions \cite{Go} 
(see also \cite{Vo} and references therein).
\medskip

Let $\mathbb S$ be a topological semi-group, and let $\mathbf R(s)$ be a bounded representation 
of $\mathbb S$ by linear operators on a Banach space $X$. A {\it cocycle} for $\mathbf R$ is a
function $F: \mathbb S \longmapsto X$ satisfying
$$
F(s_1s_2)= F(s_1) +\mathbf R(s_1)F(s_2) \quad \text{ for } \ s_1,s_2 \in \mathbb S.
$$
$F: \mathbb S \longmapsto X$ is called a {\it coboundary} if there exists $x \in X$ such that
$F(s) =(I- \mathbf R(s))x$ for every $s \in \mathbb S$.  Parry and Schmidt \cite{PS} proved that
when $X$ is reflexive and $\mathbb S$ is Abelian, a cocycle is a coboundary if and only if 
it is bounded; when $\mathbb S =\mathbb N$, we recover (\ref{browder}),
since then cocycles are of the form $F(n)=\sum_{k=0}^{n-1} T^k F(1)$.
\smallskip

The following observation by Y. Derriennic (see also \cite{BD}) relates 
our definitions to classical cocycles of representations of $\mathbb N^2$.
\begin{prop} \label{yves}
Let $T$ and $S$ be commuting contractions of a Banach space $X$, and let 
$\mathbf R(\vec u):=T^{u_1}S^{u_2}$ for $\vec u=(u_1,u_2) \in \mathbb N^2$.

(i) If $F(\vec u)$ is a cocycle for $\mathbf R$, then $(I-S)F(\vec e_1)=(I-T)F(\vec e_2)$
is a joint coboundary.

(ii) If $z=(I-T)y=(I-S)x$ is a joint coboundary, then there exists a cocycle $F$ for 
$\mathbf R$ with $F(\vec e_1)=x$ and $F(\vec e_2)=y$ (the cocycle generated by $x$ and $y$).

(iii) If $F(\vec u) =(I-\mathbf R(\vec u))y$ is a coboundary for $\mathbf R$, then 
$(I-S)F(\vec e_1)=(I-T)F(\vec e_2)$ is a double coboundary.

(iv) If $z=(I-T)(I-S)h$ is a double coboundary, then the cocycle generated by 
 $x=(I-T)h$ and $y=(I-S)h$ is the coboundary $F(\vec u)=(I-\mathbf R(\vec u))h$.
\end{prop}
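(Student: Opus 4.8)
The plan rests on a single structural observation: a cocycle $F$ for $\mathbf R$ over the monoid $\N^2$ is completely determined by the pair of vectors $F(\vec e_1),F(\vec e_2)$, because iterating the cocycle relation forces $F(m,0)=\sigma_m(T)F(\vec e_1)$ and $F(0,n)=\sigma_n(S)F(\vec e_2)$, hence $F(m,n)=\sigma_m(T)F(\vec e_1)+T^m\sigma_n(S)F(\vec e_2)$, where $\sigma_m(R):=\sum_{k=0}^{m-1}R^k$; and the only compatibility constraint linking $F(\vec e_1)$ and $F(\vec e_2)$ is precisely the joint-coboundary identity. Part (i) drops out of this at once: writing $\vec e_1+\vec e_2$ in the two orders in the cocycle relation gives $F(\vec e_1)+TF(\vec e_2)=F(\vec e_2)+SF(\vec e_1)$, i.e. $(I-S)F(\vec e_1)=(I-T)F(\vec e_2)$, and the common value obviously lies in $(I-T)X\cap(I-S)X$.

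For part (ii), given $z=(I-T)y=(I-S)x$ I would simply \emph{define}
\[
F(m,n):=\sigma_m(T)\,x+T^m\sigma_n(S)\,y\qquad\text{for }(m,n)\in\N^2,
\]
so that $F(\vec e_1)=x$ and $F(\vec e_2)=y$, and then verify that this is a cocycle. Two checks are needed. The first is \emph{path independence}: $F(m,n)$ must also equal $\sigma_n(S)\,y+S^n\sigma_m(T)\,x$; the difference of the two expressions, after using $I-R^m=\sigma_m(R)(I-R)$ and $TS=ST$, equals $\sigma_n(S)\sigma_m(T)z-\sigma_m(T)\sigma_n(S)z=0$. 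The second is the \emph{cocycle identity} $F(\vec u+\vec v)=F(\vec u)+\mathbf R(\vec u)F(\vec v)$; expanding both sides with the telescoping relations $\sigma_{m+p}(R)=\sigma_m(R)+R^m\sigma_p(R)$ and cancelling leaves exactly an instance of the path independence already established.

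Parts (iii) and (iv) are then short. For (iii), a coboundary $F(\vec u)=(I-\mathbf R(\vec u))y$ of the representation $\mathbf R$ is automatically a cocycle, and here $F(\vec e_1)=(I-T)y$, $F(\vec e_2)=(I-S)y$, so commutativity gives $(I-S)F(\vec e_1)=(I-S)(I-T)y=(I-T)(I-S)y=(I-T)F(\vec e_2)\in(I-T)(I-S)X$. For (iv), with $x=(I-T)h$ and $y=(I-S)h$ we have $z=(I-S)x=(I-T)y$, so part (ii) furnishes the (unique) cocycle generated by $x$ and $y$; plugging these into the explicit formula,
\[
F(m,n)=\sigma_m(T)(I-T)h+T^m\sigma_n(S)(I-S)h=(I-T^m)h+T^m(I-S^n)h=(I-T^mS^n)h,
\]
which is $(I-\mathbf R(m,n))h$.

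The only genuine computation is the pair of verifications in part (ii); everything else is a one-line use of $TS=ST$. I expect the cocycle-identity check to be the most bookkeeping-heavy step, but it is routine once path independence is in hand, since it reduces to the same elementary identity among $\sigma_m$, $\sigma_n$, and $z$.
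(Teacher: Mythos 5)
Your proposal is correct and follows essentially the same route as the paper: the same identity on $\vec e_1+\vec e_2$ for (i), the same explicit formula $F(m,n)=\sum_{k=0}^{m-1}T^k x+T^m\sum_{j=0}^{n-1}S^j y$ for (ii), and the same one-line deductions for (iii) and (iv). The only difference is that you spell out the telescoping/path-independence verification that the paper summarizes as ``some computations,'' and that verification is carried out correctly.
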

\begin{proof}
Since $\mathbf R(\vec e_1)=T$ and $\mathbf R(\vec e_2)=S$, (i) follows from
$$
F(\vec e_2) +SF(\vec e_1) =F(\vec e_1+\vec e_2) = F(\vec e_1)+TF(\vec e_2).
$$

We define (with empty sum defined as zero)
$$
F((n,m))= F(n\vec e_1+m\vec e_2) = \sum_{k=0}^{n-1}T^k x+T^n \sum_{j=0}^{m-1}S^j y.
$$
Some computations, using $(I-S)x=(I-T)y$, show that $F$ is a cocycle, and (ii) then follows.

Since a coboundary for $\mathbf R$ is a cocycle, (iii) follows from (i) and the definition of $F$.

Let $F$ be the cocycle generated by $x$ and $y$, given by (ii).  Then
$$
F(n\vec e_1+m\vec e_2) = \sum_{k=0}^{n-1}T^k x+T^n \sum_{j=0}^{m-1}S^j y=(I-T^n)h+T^n(I-S^m)h=
(I-T^nS^m)h,
$$
which proves (iv).
\end{proof}

%Parry and Schmidt \cite{PS} proved that when $X$ is reflexive and $\mathbb S$ is locally
%compact and Abelian, any bounded cocycle is a coboundary. Hence when the cocycle generated 
%by a joint coboundary of $T$ and $S$, as in Proposition \ref{yves}(ii), is bounded, 
%the joint coboundary is a double coboundary.
\smallskip

As mentioned above, in this work we investigate the existence of joint coboundaries for the 
commuting $T$ and $S$, which are {\it not} double coboundaries. In view of Proposition \ref{yves},
the problem is, for actions of $\mathbb N^2$ or $\mathbb Z^2$, to find cocycles of the 
representation $\mathbf R$ (in different spaces) which are not coboundaries (non-triviality of
the first cohomology group). For example, if we have an ergodic action of a countable group on a 
measure space with an atom, then every cocycle is a (measurable) coboundary \cite[Exercise 2.9]{Sc}.
\smallskip

When we have an action of $\mathbb Z^2$ generated by commuting homeomorphisms $\theta$ and 
$\tau$ of a compact metric space $M$, it induces a representation $\bf R$ on $C(M)$, and any 
cocycle $F(\vec u) \in C(M)$ is a function on $M$. A special case of interest is that of an 
Anosov action on a differentiable manifold $M$ (see \cite{KaS}). In that case the study of 
cocyles and coboundaries is connected to rigidity proerties of the action. Katok and Spatzier 
\cite[Theorem 2.9]{KaS} proved that every $C^\infty$ (H\"older) cocycle  $F(\vec u)(t)$ 
of integral zero is a $C^\infty$ (H\"older) coboundary, and gave some applications.
Proposition \ref{yves} allows us to express this result in terms of joint and double 
coboundaries. The case of irrational rotations of the circle in Theorem \ref{irrational-pair} 
below shows that the analogue result of the Katok-Spatzier theorem need not hold for 
continuous cocycles of commuting (non-hyperbolic) diffeomorphisms.
\medskip

In Section 2 we study the existence of non-trivial double coboundaries in Banach spaces. 
We show that if $T \ne I$ and $S$ have the same fixed points, then there exist non-trivial 
double coboundaries.  If in addition $T$ and $S$ are mean ergodic, then the set of double 
coboundaries is closed if and only if both $T$ and $S$ are uniformly ergodic; if one of 
the operators is uniformly ergodic, then every joint coboundary is a double one.
\smallskip

In Section 3 we show that if $\theta$ and $\tau$ are commuting invertible measure preserving 
transformations of a standard probability space which generate an aperiodic $\mathbb Z^2$-action, 
then their induced unitary operators on $L_2$ have a joint coboundary which is not a double 
coboundary in $L_2$.
\smallskip

In Section 4 we study in detail pairs of irrational rotations of the unit circle $\mathbb T$,
with induced operators $T_\alpha$ and $T_\beta$ on different function spaces.  
We show the existence of a joint coboundary 
$\psi \in (I-T_\alpha)C(\mathbb T) \cap (I-T_\beta)C(\mathbb T)$ which is not even a measurable 
double coboundary -- there is no measurable $h$ such that $(I-T_\alpha)(I-T_\beta)h=\psi$.
\smallskip

In Section 5 we prove that when $T$ and $S$ are commuting mean ergodic contractions, then
$A_n(T)A_n(S)$ converges in operator norm if and only if $(I-T)X +(I-S)X$ is closed. We
prove that if $\theta$ and $\tau$ are  commuting ergodic measure preserving transformations
of a non-atomic probability space, and $U$ and $V$ are the isometries  they induce on $L_p$,
$1 \le p <\infty$, then $(I-U)L_p+(I-V)L_p$ is not closed.

\bigskip

\section{On double coboundaries of commuting contractions}

In this section we study the existence of non-trivial double coboundaries, and show that
when $X$ is reflexive, the operators have the same fixed points, and one of them
 is uniformly ergodic, then every joint coboundary is a double coboundary.
\smallskip

For a bounded operator $T$ on a Banach space $X$, we denote by $F(T)$ the space of fixed points
$\{x \in X: Tx=x\}$.
The following "ergodic decomposition" induced by commuting contractions was proved in
\cite[Theorem 2.4]{CL}.

\begin{theo}
Let $T_1,T_2,\dots ,T_d$ be commuting mean ergodic contractions of a Banach space $X$.
Then 
\begin{equation}\label{dec}
X = \overline{\sum_{1\le j \le d} F(T_j)} \oplus \overline{\prod_{1 \le j \le d}(I-T_j)X}.
\end{equation}
\end{theo}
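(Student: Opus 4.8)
The plan is to construct an explicit bounded projection $P$ on $X$ whose range is $\overline{\sum_j F(T_j)}$ and whose kernel is $\overline{\prod_j(I-T_j)X}$; since $X=PX\oplus\ker P$ for any bounded projection, the decomposition follows at once. The starting point is the one-variable mean ergodic theorem: since each $T_j$ is mean ergodic, $A_n(T_j)=\frac1n\sum_{k=0}^{n-1}T_j^k$ converges strongly to an idempotent $E_j$, and from $\|T_j^n\|\le1$ one gets $\frac1n(T_j^n-I)\to0$, hence $T_jE_j=E_jT_j=E_j$, so that $E_j(I-T_j)=0$, $E_jx=x$ for $x\in F(T_j)$, and $E_jX\subseteq F(T_j)$. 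Because each $A_n(T_j)$ is a polynomial in $T_j$ and the $T_j$ commute, passing to strong limits shows that the $E_j$ commute with one another and with every $T_k$. I then set $Q:=\prod_{j=1}^d(I-E_j)$ (the order is irrelevant) and $P:=I-Q$; commutativity together with $(I-E_j)^2=I-E_j$ makes $Q$, hence $P$, a bounded projection.

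For the range of $P$: expanding $P=\sum_{\emptyset\ne A\subseteq\{1,\dots,d\}}(-1)^{|A|+1}\prod_{j\in A}E_j$ and using $E_jX\subseteq F(T_j)$ gives $PX\subseteq\sum_jF(T_j)$, while if $x\in F(T_j)$ then $(I-E_j)x=0$, so $Qx=0$ and $Px=x$; thus $F(T_j)\subseteq PX$ for each $j$. Since $PX$ is closed, $PX=\overline{\sum_jF(T_j)}$.

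The remaining, more delicate step is to identify $\ker P=QX$ with $\overline{\prod_j(I-T_j)X}$, and I expect this to be the main obstacle. One inclusion is easy: since $E_j(I-T_j)=0$ we have $(I-E_j)(I-T_j)=I-T_j$, so for $w=\prod_j(I-T_j)v$, rearranging the commuting factors gives $Qw=w$, whence $\prod_j(I-T_j)X\subseteq QX$ and, $QX$ being closed, $\overline{\prod_j(I-T_j)X}\subseteq QX$. For the reverse inclusion, factor $I-A_n(T_j)=\frac1n\sum_{k=0}^{n-1}(I-T_j^k)=(I-T_j)D_n^{(j)}$ with $D_n^{(j)}$ bounded and commuting with every $T_k$; peeling the factors of $Q=\prod_j(I-E_j)$ off one at a time and substituting these approximations, a finite iteration of strong limits (legitimate because the operators involved are uniformly bounded, commute, and satisfy $I-E_j=\lim_n(I-A_n(T_j))$ strongly) shows $Qx\in\overline{\prod_j(I-T_j)X}$ for every $x$. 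Combining the two inclusions yields $\ker P=\overline{\prod_j(I-T_j)X}$, and hence the theorem; the only points requiring care are the justification of the iterated limits and the repeated rearrangement of commuting but non-idempotent operator factors, the rest being routine bookkeeping.
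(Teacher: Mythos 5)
Your argument is correct, and it is worth noting that the paper itself gives no proof of this theorem: it is quoted from \cite{CL} (Theorem 2.4 there), so there is no in-text argument to compare with. Your route, building the commuting ergodic projections $E_j=\lim_n A_n(T_j)$ and the complementary projections $Q=\prod_j(I-E_j)$, $P=I-Q$, is the natural one and is presumably close in spirit to the cited proof; all the algebraic identities you invoke ($T_jE_j=E_jT_j=E_j$, $E_j(I-T_j)=0$, commutation of the $E_j$ with each other and with every $T_k$, idempotence and boundedness of $Q$) are correctly justified, and the identification of the range of $P$ with $\overline{\sum_j F(T_j)}$ is complete. On the step you flag as delicate, $QX\subseteq\overline{\prod_j(I-T_j)X}$: your sketch does close, and more easily than your wording suggests. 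From $(I-E_j)u=\lim_n(I-T_j)D_n^{(j)}u$ one gets $(I-E_j)u\in\overline{(I-T_j)X}$ for every $u$, and since $D_n^{(j)}$ commutes with all the $T_k$, the bounded operator $I-E_j$ maps $\overline{(I-T_{j+1})\cdots(I-T_d)X}$ into $\overline{(I-T_j)(I-T_{j+1})\cdots(I-T_d)X}$; peeling off the factors of $Q$ is then a finite induction in which each step is a single strong limit followed by continuity of a fixed bounded operator, so no interchange of limits arises. In particular the appeal to uniform boundedness is a red herring: the $D_n^{(j)}$ are not uniformly bounded in $n$ (their norms grow linearly), but nothing in the argument needs that. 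Finally, your construction yields slightly more than the statement: since each $F(T_j)$ is fixed by $P$ and the inclusion--exclusion expansion gives $PX\subseteq\sum_j F(T_j)$, one gets $PX=\sum_j F(T_j)$, so the sum of the fixed spaces is automatically closed and the closure bar on the first summand of the decomposition is redundant.
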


\begin{prop} \label{homogeneous}
Let $T$ and $S$ be commuting contractions of a reflexive Banach space $X$. Then

(i) $(I-T)(I-S)z=0$ if and ony if $z \in \overline{F(T)+F(S)}$.

(ii) If $y \in (I-T)(I-S)X$, then there exists a unique $x \in \overline{(I-T)(I-S)X}$
with $(I-T)(I-S)x=y$.
\end{prop}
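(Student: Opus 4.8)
The plan is to treat the two parts separately, but both will rely on the ergodic decomposition \eqref{dec} applied to the pair $\{T,S\}$ together with the corresponding decomposition applied to the single operators $T$ and $S$ (each mean ergodic, being a contraction on a reflexive space).

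For part (i), first I would note that one inclusion is immediate: if $z\in F(T)+F(S)$ then writing $z=u+v$ with $Tu=u$, $Sv=v$ gives $(I-T)(I-S)z=(I-T)(I-S)u+(I-T)(I-S)v=(I-S)(I-T)u+(I-T)(I-S)v=0$, and since $(I-T)(I-S)$ is bounded this extends to the closure $\overline{F(T)+F(S)}$. For the converse, I would apply the decomposition \eqref{dec} with $d=2$: $X=\overline{F(T)+F(S)}\oplus\overline{(I-T)(I-S)X}$. Given $z$ with $(I-T)(I-S)z=0$, decompose $z=z_0+z_1$ along this direct sum; it suffices to show $z_1=0$. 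Here $z_1\in\overline{(I-T)(I-S)X}$, and since the first summand is killed by $(I-T)(I-S)$ we get $(I-T)(I-S)z_1=0$ as well. So the real content is: $(I-T)(I-S)$ is injective on $\overline{(I-T)(I-S)X}$. This is exactly what part (ii) asserts (uniqueness), so I would prove (ii) first — or rather prove the injectivity statement and deduce both.

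For part (ii), existence of \emph{some} $x\in\overline{(I-T)(I-S)X}$ with $(I-T)(I-S)x=y$ when $y\in(I-T)(I-S)X$: take any $h$ with $(I-T)(I-S)h=y$, decompose $h=h_0+h_1$ along \eqref{dec}, and set $x=h_1$; then $(I-T)(I-S)x=(I-T)(I-S)h=y$ and $x\in\overline{(I-T)(I-S)X}$. The crux is uniqueness, i.e. that $(I-T)(I-S)w=0$ together with $w\in\overline{(I-T)(I-S)X}$ forces $w=0$. I expect this to be the main obstacle. The idea is to use mean ergodicity of the individual operators: by Browder/mean ergodic theory on reflexive $X$, $X=F(T)\oplus\overline{(I-T)X}$ and $X=F(S)\oplus\overline{(I-S)X}$, and moreover $\overline{(I-T)(I-S)X}\subseteq\overline{(I-S)X}$ (apply $A_n(S)$: $A_n(S)(I-T)(I-S)u=(I-T)(I-S)A_n(S)u\to 0$ since $(I-S)A_n(S)\to 0$ strongly, hence the closed subspace $\overline{(I-S)X}$, which is $\{v:A_n(S)v\to 0\}$, contains $(I-T)(I-S)X$). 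Now if $(I-T)(I-S)w=0$ with $w\in\overline{(I-T)(I-S)X}\subseteq\overline{(I-S)X}\cap\overline{(I-T)X}$: set $v=(I-S)w\in\overline{(I-S)X}$ — wait, I need $v\in\overline{(I-T)X}$ too. Since $w\in\overline{(I-S)X}$ and $(I-T)$ commutes with $A_n(S)$, $(I-S)$, one checks $(I-S)w\in\overline{(I-S)X}$; and $(I-T)v=(I-T)(I-S)w=0$ means $v\in F(T)$. But $v\in\overline{(I-S)X}$, and $\overline{(I-S)X}$ is $T$-invariant (as $T,S$ commute), so on the reduced reflexive space $\overline{(I-S)X}$ apply the decomposition for $T$: $v\in F(T)\cap\overline{(I-S)X}$. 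Hmm, this need not be zero directly, so instead: $v=(I-S)w$ with $w\in\overline{(I-T)X}$ forces — since $(I-T)X$ is $S$-invariant and $\overline{(I-S)[\overline{(I-T)X}]}$ sits inside $\overline{(I-T)X}$... The clean route: $v\in F(T)$ and $v\in\overline{(I-S)X}$; but $F(T)\cap\overline{(I-S)X}$: apply $A_n(S)$, $v=A_n(S)v\to$ (projection onto $F(S)$ part of $v$), but $v\in\overline{(I-S)X}$ means $A_n(S)v\to 0$, so $v=0$. Thus $(I-S)w=0$, i.e. $w\in F(S)$; similarly $w\in\overline{(I-T)X}$ and $F(S)$ is $T$-invariant, so by the same argument applied with $T$ in place of $S$ — $w\in F(S)$, and $w\in\overline{(I-T)X}$ — gives $A_n(T)w\to 0$ while $w=(I-S)$-fixed does not immediately help; instead use $w\in\overline{(I-T)(I-S)X}$ directly: $A_n(T)w\to 0$. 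But also $w\in F(S)$, and I want to conclude $w\in\overline{(I-T)X}$... which I already have. To finish: $w\in F(S)$ and $(I-T)(I-S)w=0$ is automatic; I need another relation. Use that $w\in\overline{(I-T)(I-S)X}$ and $w\in F(S)$: then $A_n(T)w\to 0$ (as $\overline{(I-T)(I-S)X}\subseteq\overline{(I-T)X}$), but $w\in F(S)$ so nothing. The honest finish: since $w\in F(S)$, $w=(I-T)(I-S)($stuff$)$ in closure means $w\in\overline{(I-T)(I-S)X}\cap F(S)\subseteq\overline{(I-T)X}\cap F(S)$; now $F(S)$ is $T$-invariant reflexive, decompose $F(S)=F(T)\cap F(S)\oplus\overline{(I-T)F(S)}$, and $w\in\overline{(I-T)X}\cap F(S)=\overline{(I-T)F(S)}$ (one checks these agree). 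Finally $w\in\overline{(I-T)(I-S)X}$ and $(I-S)=0$ on... no, $(I-S)w=0$. So $w\in\overline{(I-T)(I-S)X}$; approximate $w\approx(I-T)(I-S)u_n$; apply nothing new. At this point I would instead invoke the symmetric decomposition cleanly: \eqref{dec} gives $\overline{(I-T)(I-S)X}\cap\bigl(F(S)+F(T)\bigr)$-closure direct-sum-complement is $0$, and $w\in F(S)\subseteq\overline{F(T)+F(S)}$ while $w\in\overline{(I-T)(I-S)X}$, so $w=0$ by directness of the sum in \eqref{dec}. That last line is the whole point and it closes the argument cleanly; the earlier steps just showed $(I-T)(I-S)w=0,\ w\in\overline{(I-T)(I-S)X}\Rightarrow w\in\overline{F(T)+F(S)}$, which with \eqref{dec} gives $w=0$. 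With uniqueness in hand, part (i)'s converse follows as sketched, and (ii)'s existence is the decomposition-truncation above. The main obstacle is organizing these intertwined reductions — deducing $w\in\overline{F(T)+F(S)}$ from $(I-T)(I-S)w=0$ — so I would make sure to state and use at the outset that each $(I-T_j)X$ closure equals $\{x:A_n(T_j)x\to0\}$ and is invariant under the other operators.
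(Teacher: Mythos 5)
Your overall architecture (reduce everything to the injectivity of $(I-T)(I-S)$ on $Y:=\overline{(I-T)(I-S)X}$, get existence in (ii) by truncating any solution along \eqref{dec}, and finish uniqueness by the directness of \eqref{dec}) is sound, but the proof has a genuine gap exactly at its crux. The decisive step is showing $v:=(I-S)w=0$ when $w\in Y$ and $(I-T)(I-S)w=0$. Your ``clean route'' argues: $v\in F(T)$, $v\in\overline{(I-S)X}$, and then ``$v=A_n(S)v\to 0$''. But $A_n(S)v=v$ requires $Sv=v$, and you only know $Tv=v$; from $v\in\overline{(I-S)X}$ you get $A_n(S)v\to 0$, which says nothing about $v$ itself. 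The statement you are implicitly invoking, $F(T)\cap\overline{(I-S)X}=\{0\}$, is false in general: in the paper's own example $X=H\oplus H$, $T(u,v)=(Uu,v)$, $S(u,v)=(u,Uv)$ with $U$ unitary and $F(U)=\{0\}$, one has $F(T)=\{0\}\oplus H=\overline{(I-S)X}$. You even flagged this (``this need not be zero directly'') and then reverted to the same claim, so as written the uniqueness argument does not close; everything downstream ($w\in F(S)\subseteq\overline{F(T)+F(S)}$, hence $w\in Y\cap\overline{F(T)+F(S)}=\{0\}$ by \eqref{dec}) is fine but rests on this unproved step.

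The repair is precisely the route you started and abandoned mid-sentence: use the $S$-invariance of $\overline{(I-T)X}$ rather than membership in $\overline{(I-S)X}$. Since $w\in Y\subseteq\overline{(I-T)X}$ and $S\,\overline{(I-T)X}\subseteq\overline{(I-T)X}$ by commutativity, $v=(I-S)w\in\overline{(I-T)X}$; combined with $v\in F(T)$, the mean ergodic decomposition for $T$ alone gives $v=A_n(T)v\to 0$, so $v=0$, and your ending then applies verbatim. With this one fix your proof is complete and is genuinely different from the paper's: the paper instead averages the identities $0=(I-T^n)(I-S^n)z_2$ over $n\le N$, uses reflexivity to pass to the limits of $\frac1N\sum_{n=1}^N T^nz_2$, $\frac1N\sum_{n=1}^N S^nz_2$ and $\frac1N\sum_{n=1}^N (TS)^nz_2$, kills the first two via \eqref{dec}, and derives $z_2=-z_2$. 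The corrected version of your argument is, if anything, more elementary, using only the one-operator mean ergodic theorem plus the directness of \eqref{dec}; so do fix the step rather than discard the approach.
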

\begin{proof} (i) By commutativity and continuity, $z \in \overline{F(T)+F(S)}$
satisfies $(I-T)(I-S)z=0$. For the converse, let $(I-T)(I-S)z=0$. Denote 
$Y:=\overline{(I-T)(I-S)X}$, which is obviously $T$ and $S$ invariant. By the ergodic 
decomposition (\ref{dec}), $z=z_1+z_2$ with $z_1 \in \overline{F(T)+F(S)}$ and $z_2 \in Y$,
so $(I-T)(I-S)z_2=0$. Hence
$$
0= \frac1{N} \sum_{n=1}^N \sum_{k=0}^{n-1} \sum_{j=0}^{n-1} T^kS^j (I-T)(I-S)z_2 =
\frac1{N} \sum_{n=1}^N (I-T^n)(I-S^n)z_2 =
$$
$$
 z_2 + \frac1{N}  \sum_{n=1}^N (TS)^nz_2 -\frac1{N}\sum_{n=1}^N T^n z_2
 -\frac1{N}\sum_{n=1}^N S^n z_2.
$$
By reflexivity, the three averages converge. Each of the last two limits is in $F(T)+F(S)$
with $z_2 \in Y$, so (\ref{dec}) yields that each of these is zero.  The first limit is 
$TS$-invariant, so by the above $z_2$ is $TS$-invariant, which yields
$$
z_2 = \lim_{N\to \infty} \frac1{N} \sum_{n=1}^N (TS)^n z_2 = -z_2,
$$
 which proves $z_2=0$.
\smallskip

Proof of (ii): If $(I-T)(I-S)x_1 = (I-T)(I-S)x_2 $ for $x_1,x_2 \in Y$, then by (i)
$x_1-x_2 \in \overline{F(T)+F(S)}$, so $x_1-x_2=0$ by (\ref{dec}).
\end{proof}

{\bf Remark.} The construction of a solution $(I-T)(I-S)x=y$ given in \cite[Theorem 3.1]{CL}
yields the (unique) solution $x \in Y$,  by the invariance of $Y$ under $T$ and $S$.
\medskip

\begin{theo} \label{closures}
Let $T$ and $S$ be commuting contractions of a reflexive Banach space $X$. Then
$$
\overline{(I-T)(I-S)X} = \overline{(I-T)X\cap (I-S)X} = \overline{(I-T)X}\cap \overline{(I-S)X} .
$$
\end{theo}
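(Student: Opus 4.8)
I would split the statement into the (essentially trivial) chain of ``$\subseteq$'' inclusions and the one reverse inclusion that carries all the content. For the first, note that commutativity gives both $(I-T)(I-S)x=(I-T)\bigl[(I-S)x\bigr]\in(I-T)X$ and $(I-T)(I-S)x=(I-S)\bigl[(I-T)x\bigr]\in(I-S)X$, so $(I-T)(I-S)X\subseteq(I-T)X\cap(I-S)X\subseteq\overline{(I-T)X}\cap\overline{(I-S)X}$; taking closures yields
$$
\overline{(I-T)(I-S)X}\ \subseteq\ \overline{(I-T)X\cap(I-S)X}\ \subseteq\ \overline{(I-T)X}\cap\overline{(I-S)X}.
$$
Thus the whole theorem comes down to proving $\overline{(I-T)X}\cap\overline{(I-S)X}\subseteq\overline{(I-T)(I-S)X}$.

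My plan for that inclusion is to turn the two-operator statement into the ordinary one-operator mean ergodic decomposition on an invariant subspace. Since $X$ is reflexive, $T$ and $S$ are mean ergodic; let $E_T$ and $E_S$ be their mean ergodic projections, so that $\overline{(I-T)X}=\ker E_T$ and $\overline{(I-S)X}=\ker E_S$. Put $Y:=\overline{(I-T)X}$. Because $S$ commutes with each average $A_n(T)$, it commutes with $E_T$, so $Y=\ker E_T$ is $S$-invariant. The space $Y$ is reflexive, hence $S|_Y$ is a mean ergodic contraction on $Y$; and since each $A_n(S)$ maps $Y$ into $Y$, the mean ergodic projection of $S|_Y$ is $E_S|_Y$. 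Consequently the $S|_Y$-decomposition of $Y$ gives $\ker(E_S|_Y)=\overline{(I-S)Y}$.

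Now I would take $y\in\overline{(I-T)X}\cap\overline{(I-S)X}$, so $y\in Y$ and $E_Sy=0$; then $(E_S|_Y)y=0$, whence $y\in\overline{(I-S)Y}$. Finally, using continuity of $I-S$ and $TS=ST$,
$$
\overline{(I-S)Y}=\overline{(I-S)\,\overline{(I-T)X}}=\overline{(I-S)(I-T)X}=\overline{(I-T)(I-S)X},
$$
so $y\in\overline{(I-T)(I-S)X}$, which finishes the proof.

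There is no delicate estimate in this argument; the point to get right is simply the reduction --- that one should pass to the $S$-invariant subspace $Y=\overline{(I-T)X}$ and check that $E_S$ restricts to the mean ergodic projection of $S|_Y$ there. An alternative, staying closer to this section's tools, is to use the ergodic decomposition (\ref{dec}) with $d=2$ to write $y=y_1+y_2$ with $y_1\in\overline{F(T)+F(S)}$ and $y_2\in\overline{(I-T)(I-S)X}$; since $y_2$, hence $y_1=y-y_2$, lies in $\ker E_T\cap\ker E_S$, it suffices to show $\overline{F(T)+F(S)}\cap\ker E_T\cap\ker E_S=\{0\}$. For $z$ in this set, approximating by $a_n+b_n$ with $a_n\in F(T),\ b_n\in F(S)$ and using $E_Tz=0$ gives $z=\lim_n(I-E_T)b_n$; as $E_S$ and $E_T$ commute and $b_n\in F(S)$, each $(I-E_T)b_n$ lies in the closed subspace $F(S)$, so $z\in F(S)$, i.e.\ $E_Sz=z$, while $E_Sz=0$ forces $z=0$. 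I would present whichever of the two arguments comes out shorter.
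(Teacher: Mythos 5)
Both of your arguments are correct, but they take a genuinely different route from the paper. The paper argues by contradiction via duality: if some $z\in\overline{(I-T)X}\cap\overline{(I-S)X}$ were not in $\overline{(I-T)(I-S)X}$, Hahn--Banach would give $\phi\in X^*$ with $\phi(z)\ne 0$ and $(I-T^*)(I-S^*)\phi=0$; Proposition \ref{homogeneous}(i), applied in the reflexive dual $X^*$, places $\phi$ in $\overline{F(T^*)+F(S^*)}$, and evaluating such functionals on $z$ (whose Ces\`aro averages under $T$ and under $S$ tend to $0$) forces $\phi(z)=0$, a contradiction. Your first argument is instead direct and purely primal: you pass to the $S$-invariant subspace $Y=\ker E_T=\overline{(I-T)X}$, note that $E_S$ restricts to the mean ergodic projection of $S|_Y$ (because $A_n(S)$ commutes with $E_T$ and $Y$ is closed), and read off $\ker(E_S|_Y)=\overline{(I-S)Y}=\overline{(I-T)(I-S)X}$. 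This buys something: it avoids Hahn--Banach and Proposition \ref{homogeneous} altogether, uses only the one-operator mean ergodic theorem, and in fact proves the statement under the weaker hypothesis that $T$ and $S$ are commuting \emph{mean ergodic} power-bounded operators --- reflexivity enters only to guarantee mean ergodicity, whereas the paper's proof needs reflexivity of $X^*$ in an essential way. Your second alternative, via the decomposition (\ref{dec}) with $d=2$, is closer to the section's toolkit but still dispenses with duality; the only point to state carefully there is that $E_T$ maps $F(S)$ into itself, which follows from $SE_T=E_TS$ (equivalently from $E_SE_T=E_TE_S$, as you say), so that $(I-E_T)b_n\in F(S)$ and the limit $z$ lies in $F(S)\cap\ker E_S=\{0\}$. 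Either write-up is a valid and somewhat more elementary substitute for the paper's proof.
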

\begin{proof}
Obviously 
$$
\overline{(I-T)(I-S)X} \subset \overline{(I-T)X\cap (I-S)X}  \subset
\overline{(I-T)X}\cap \overline{(I-S)X} .
$$
We show that 
$\overline{(I-T)(I-S)X} =  \overline{(I-T)X} \cap \overline{(I-S)X}$.
If not, by the inclusion above  there exists $z \in \overline{(I-T)X} \cap \overline{(I-S)X}$
which is not in $\overline{(I-T)(I-S)X} $; by the Hahn-Banach theorem  there exists 
$\phi \in X^*$ with $\phi(z) \ne 0$ and $\phi[(I-T)(I-S)X]=\{0\}$. We then have 
$(I-T^*)(I-S^*)\phi=0$, and since $X^*$ is reflexive, Proposition \ref{homogeneous}(i) yields 
that $\phi \in \overline{F(T^*)+F(S^*)}$.  But $\|\frac1n\sum_{k=1}^nT^k z\| \to 0$ and
$\|\frac1n\sum_{k=1}^n S^k z\| \to 0$, so for $\psi_1 \in F(T^*)$ and $\psi_2 \in F(S^*)$
we have
$$
(\psi_1+\psi_2)(z)= \frac1n\sum_{k=1}^n \psi_1(T^k z) + \frac1n\sum_{k=1}^n \psi_2(S^kz) \to 0.
$$ 
This yields that $\phi(z)=0$, a contradiction, which proves the theorem.
\end{proof}

Remember that the elements of $(I-T)(I-S)X$ are called {\it double coboundaries} (for $T$ 
and $S$). If $T$ or $S$ is the identity, then all double coboundaries are trivial (zero).
\smallskip

{\bf Example.} {\it $T$ and $S$ not the identity, with only trivial double coboundaries.}

\noindent 
Let $0\ne E \ne I$ be a (continuous linear) projection on a Banach space $X$, and put 
$T=E$, $S=I-E$; then $(I-T)(I-S)=0$. A sightly less trivial example is this:
Let $U$ be a unitary operator on a Hilbert space $H$, put 
$X =H\oplus H =\{(u,v): u,v \in H\}$, and define $T(u,v)=(Uu,v)$ and $S(u,v)=(u,Uv)$. 
Then $(I-T)(I-S)=0$.
\medskip

\begin{theo} \label{non-triv}
Let $T$ and $S$ be commuting contractions of a Banach space $X$. If $F(T) \subset F(S) \ne X$,
then there exist non-trivial double coboundaries.
\end{theo}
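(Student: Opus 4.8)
The goal is to produce a nonzero element of $(I-T)(I-S)X$; equivalently, to show that the operator $(I-T)(I-S)$ does not vanish identically on $X$. The plan is to argue by contradiction: assume $(I-T)(I-S)=0$ on $X$ and deduce $F(S)=X$, which contradicts the hypothesis $F(S)\ne X$. Since $(I-T)(I-S)=I-T-S+TS$, the assumption is the operator identity $T+S=I+TS$.

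The first step is to use the fixed-point hypothesis to upgrade this to $(I-S)^2=0$. For an arbitrary $x\in X$, applying $T+S=I+TS$ to $x$ and rearranging gives $Sx-x=TSx-Tx=T(Sx-x)$, so $Sx-x\in F(T)$. By hypothesis $F(T)\subset F(S)$, hence $S(Sx-x)=Sx-x$, i.e.\ $(I-S)^2x=0$; as $x$ was arbitrary, $(I-S)^2=0$.

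The second step invokes that $S$ is a contraction in order to rule out a nonzero $Q:=I-S$. Since $Q^2=0$ and $S=I-Q$, the binomial expansion (or a short induction along the orbit $x,Sx,S^2x,\dots$) collapses to $S^n=I-nQ$ for every $n\ge1$. If $Q\ne0$, choose $x$ with $Qx\ne0$; then $\|S^nx\|=\|x-nQx\|\ge n\|Qx\|-\|x\|\to\infty$, contradicting $\|S^n\|\le1$. Hence $Q=0$, i.e.\ $S=I$, so $F(S)=X$ --- the contradiction sought. Therefore $(I-T)(I-S)\ne0$, and any vector $(I-T)(I-S)x\ne0$ is a non-trivial double coboundary (and its scalar multiples give infinitely many).

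I do not anticipate a serious obstacle; the one point requiring care is that $T$ and $S$ are merely commuting contractions --- there is no reflexivity, and neither operator need be mean ergodic --- so the ergodic decomposition \eqref{dec} is unavailable, and the argument must rest only on commutativity, the inclusion $F(T)\subset F(S)$, and the power-boundedness $\|S^n\|\le1$, which is used precisely in the implication $(I-S)^2=0\Rightarrow S=I$.
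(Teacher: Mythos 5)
Your proof is correct and takes essentially the same route as the paper: assume $(I-T)(I-S)=0$, observe that $(I-S)x\in F(T)\subset F(S)$ for every $x$, and then use contractivity (power-boundedness) of $S$ to force $(I-S)x=0$, hence $S=I$, contradicting $F(S)\ne X$. The only cosmetic difference is the final step, where the paper telescopes the Ces\`aro average $\frac1n\sum_{k=1}^n S^k(I-S)x$ while you expand $S^n=I-n(I-S)$ from $(I-S)^2=0$; both are the same use of $\|S^n\|\le 1$.
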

\begin{proof}
Assume all double coboundaries are zero, so for $x \in X$ we have $(I-T)(I-S)x=0$. Thus,
$(I-S)x \in F(T) \subset F(S)$, so $(I-S)x= \frac1n\sum_{k=1}^nS^k(I-S)x \to 0$, which yields
$Sx=x$. Since this is for any $x \in X$, $S=I$, a contradiciton.
\end{proof}

{\bf Examples.} 1. Let $T$ and $S$ be induced by commuting ergodic probability preserving 
transformations, e.g. irrational rotations of the unit circle. Then the theorem applies.

2. let $\mu$ and $\nu \ne \delta_1$ be probabilities on the unit circle $\mathbb T$  
such that the closed subgroup of $\mathbb T$ generated by the support of $\mu$ is all of $G$, 
and put $Tf=\mu*f$ and $Sf=\nu*f$ on $L_p$. The condition on $\mu$ implies $F(T)=\{constants\}$,
so the theorem applies.
\medskip

\begin{theo} \label{closed}
Let $T$ and $S$ be commuting mean ergodic contractions on a Banach space $X$, with $F(T)=F(S)$.
Then $(I-T)(I-S)X$ is closed if and only if both $T$ and $S$ are uniformly ergodic.
\end{theo}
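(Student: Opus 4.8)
The plan is to reduce to the situation $F(T)=F(S)=\{0\}$, in which uniform ergodicity of a contraction $R$ means precisely that $\|A_n(R)\|\to 0$, and then to prove that in that case closedness of $(I-T)(I-S)X$ is equivalent to invertibility of both $I-T$ and $I-S$.

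First I would carry out the reduction. Since $T$ and $S$ are mean ergodic, the projections $P_T=\lim_n A_n(T)$ and $P_S=\lim_n A_n(S)$ exist in the strong operator topology, with ranges $F(T)$ and $F(S)$ and kernels $\overline{(I-T)X}$ and $\overline{(I-S)X}$, and they commute because $T$ and $S$ do. From $F(T)=F(S)=:F$ one gets $P_TP_S=P_S$ (the range of $P_S$ lies in $F=F(T)$, on which $P_T$ acts as the identity) and, symmetrically, $P_SP_T=P_T$; as the two projections commute this forces $P_T=P_S=:P$. Hence $\overline{(I-T)X}=\overline{(I-S)X}=\ker P=:X_0$, and $X=F\oplus X_0$ with $F$ and $X_0$ invariant under $T$ and $S$. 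Applying the ergodic decomposition (\ref{dec}) with $d=2$, and using that $F$ is closed, gives $X=F\oplus\overline{(I-T)(I-S)X}$; since $\overline{(I-T)(I-S)X}\subseteq X_0$ and two complements of $F$ one of which contains the other must coincide, we also get $\overline{(I-T)(I-S)X}=X_0$. Because $I-S$ annihilates $F$, $(I-T)(I-S)X=(I-T)(I-S)X_0$, so we may pass to $X_0$ in place of $X$ and to the restrictions $T_0,S_0$ in place of $T,S$: these are commuting mean ergodic contractions with $F(T_0)=F(S_0)=\{0\}$, i.e. $A_n(T_0)\to 0$ and $A_n(S_0)\to 0$ strongly on $X_0$; moreover $T$ (resp. $S$) is uniformly ergodic on $X$ iff $\|A_n(T_0)\|\to 0$ (resp. $\|A_n(S_0)\|\to 0$) on $X_0$, since $A_n(T)-P$ is, in block form relative to $X=F\oplus X_0$, just $A_n(T_0)$ on $X_0$.

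It then remains to establish, for the reduced pair, the chain: $(I-T_0)(I-S_0)X_0$ is closed $\iff$ it equals $X_0$ $\iff$ $(I-T_0)(I-S_0)$ is invertible $\iff$ $I-T_0$ and $I-S_0$ are both invertible $\iff$ $\|A_n(T_0)\|\to 0$ and $\|A_n(S_0)\|\to 0$. The first equivalence holds because $\overline{(I-T_0)(I-S_0)X_0}=X_0$ by the reduction. When $(I-T_0)(I-S_0)$ is onto it is also one-to-one, since $(I-T_0)(I-S_0)z=0$ forces $(I-S_0)z\in F(T_0)=\{0\}$ and then $z\in F(S_0)=\{0\}$, so by the open mapping theorem it is invertible. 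If two commuting bounded operators have an invertible product, each is invertible: the inverse $C$ of the product commutes with both factors, and $(I-T_0)(I-S_0)C=I=(I-S_0)C(I-T_0)$ exhibits $(I-T_0)^{-1}=(I-S_0)C$, and likewise for $I-S_0$; the converse is immediate. Finally, invertibility of $I-T_0$ is equivalent to $\|A_n(T_0)\|\to 0$: if $(I-T_0)^{-1}$ is bounded, the partial-geometric-sum identity gives $A_n(T_0)=\tfrac1n(I-T_0^n)(I-T_0)^{-1}$, of norm $O(1/n)$; conversely, once $\|A_n(T_0)\|<1$ the operator $I-A_n(T_0)=(I-T_0)D_n$, with $D_n$ a polynomial in $T_0$, is invertible, and since $D_n$ and $(I-A_n(T_0))^{-1}$ commute with $I-T_0$ this forces $I-T_0$ to be invertible. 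The same reasoning applies to $S_0$, so the chain closes both directions of the theorem.

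The routine parts are the partial-geometric-sum identities and the verification that $T_0,S_0$ inherit mean ergodicity and the trivial-fixed-space property. The step I expect to demand the most care is the identification of the three subspaces $\overline{(I-T)X}$, $\overline{(I-S)X}$ and $\overline{(I-T)(I-S)X}$ with a single $X_0$: this is exactly where the hypothesis $F(T)=F(S)$ is used, and it is what makes "$(I-T)(I-S)X$ closed" already mean surjectivity onto $X_0$, which drives the forward direction. One could instead invoke Lin's uniform ergodic theorem (a power-bounded operator is uniformly ergodic iff its coboundary range is closed) to shorten the last equivalence, but the elementary geometric-sum argument above suffices precisely because $F(T_0)=F(S_0)=\{0\}$.
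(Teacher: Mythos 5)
Your argument is correct, and its skeleton is the same as the paper's: everything rests on the fact that $F(T)=F(S)$ together with the decomposition (\ref{dec}) forces $\overline{(I-T)X}=\overline{(I-S)X}=\overline{(I-T)(I-S)X}$ (the paper quotes this as (\ref{closure}) from \cite[Remark 2.5]{CL}; you re-derive it via the coincidence of the two mean ergodic projections, which is fine), and your backward direction --- restrict to the common closure and invert $I-T$ and $I-S$ there --- is essentially the paper's part (b). Where you genuinely diverge is the forward direction. The paper observes that closedness of $(I-T)(I-S)X$ sandwiches $(I-T)X$ and $(I-S)X$ between a closed set and its closure, so each range is closed, and then invokes Lin's uniform ergodic theorem \cite{L} as a black box. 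You instead exploit the reduction to $X_0$ (where the fixed spaces are trivial) to note that a closed range is automatically all of $X_0$, injectivity of $(I-T_0)(I-S_0)$ is free, the open mapping theorem gives invertibility of the product, commutativity passes invertibility to each factor, and the elementary identities $A_n(T_0)=\tfrac1n(I-T_0^n)(I-T_0)^{-1}$ and $I-A_n(T_0)=(I-T_0)D_n$ convert invertibility of $I-T_0$ into $\|A_n(T_0)\|\to 0$, i.e.\ uniform ergodicity of $T$. In effect you re-prove the only case of Lin's theorem that is needed here (trivial fixed space, dense range), so your proof is self-contained and makes visible \emph{why} closedness is so strong under $F(T)=F(S)$ --- it already means surjectivity on $X_0$; the price is length, and you lose the slightly sharper intermediate conclusion the paper's route gives for free, namely that when $(I-T)(I-S)X$ is closed it coincides with $(I-T)X$ and $(I-S)X$. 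All the individual steps you flag as delicate (the identification of the three closures with one $X_0$, the two-complements argument, the commuting-inverse manipulations) do hold as you state them.
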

\begin{proof} 
By \cite[Remark 2.5]{CL}; see (\ref{dec}),  the assumption $F(T)=F(S)$ implies 
\begin{equation} \label{closure}
\overline{(I-S)X}=\overline{(I-T)X}=\overline{(I-T)(I-S)X}.
\end{equation}

(a) If $(I-T)(I-S)X$ is closed, then $(I-T)X$ and $(I-S)X$ are closed, and by \cite{L} $T$
and $S$ are uniformly ergodic.

(b) Denote $Y:= \overline{(I-T)(I-S)X}$. If $T$ and $S$ are uniformly ergodic, then
$(I-T)X=(I-S)X=Y$ and $I-T$ and $I-S$ are invertible on $Y$. Let $y \in Y$. Then there is 
$x \in Y$ with $(I-T)x=y$, and then a $z \in Y$ with $(I-S)z=x$. Hence $(I-T)(I-S)z=y$.
Thus $(I-T)(I-S)X$ is closed.
\end{proof}

{\bf Example.} Let $T$ and $S$ be induced on $L_2$ by commuting ergodic probability preserving 
transformations on a Lebesgue space. Then by \cite{IT} $\sigma(T)=\sigma(S) =\mathbb T$,
so neither is uniformly ergodic; hence by the theorem $(I-T)(I-S)L_2$ is not closed.
\medskip

Let $T$ and $S$ be contractions of a Banach space $X$. The elements of $(I-T)X \cap (I-S)X$
are called {\it joint (or common) coboundaries} (of $T$ and $S$). When $T$ and $S$ commute,
double coboundaries are joint coboundaries, and Theorem \ref{non-triv} yields existence
of non-trivial joint coboundaries. Theorem \ref{closures} shows that in reflexive spaces,
every joint coboundary can be approximated by double coboundaries. We want to address 
the question of existence of joint coboundaries which are {\it not} double coboundaries.
Adams and Rosenblatt \cite{AR} studied existence of non-trivial joint coboundaries in the 
non-commutative case. 
\smallskip

\begin{cor} \label{equivalences}
Let $T$ and $S$ be commuting mean ergodic contractions on a Banach space $X$, with $F(T)=F(S)$.
Then the following are equivalent:

(i) The space of double coboundaries $(I-T)(I-S)X$ is closed.

(ii) The space of joint coboundaries $(I-T)X\cap (I-S)X$ is closed.

(iii) Both $T$ and $S$ are uniformly ergodic.
\end{cor}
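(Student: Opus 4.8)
The plan is to deduce the corollary from Theorem \ref{closed}, which already proves that (i) and (iii) are equivalent; it then suffices to establish the two implications (iii) $\Rightarrow$ (ii) and (ii) $\Rightarrow$ (iii), and the three statements coincide. The only extra input needed is the identity (\ref{closure}), which holds because $F(T)=F(S)$, namely
\[
\overline{(I-S)X}=\overline{(I-T)X}=\overline{(I-T)(I-S)X},
\]
together with the theorem of \cite{L}: for a mean ergodic contraction $T$, the range $(I-T)X$ is closed if and only if $T$ is uniformly ergodic. Since the corollary does not assume reflexivity, one should avoid invoking Theorem \ref{closures}; the point of the argument is that (\ref{closure}) alone already suffices.

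For (iii) $\Rightarrow$ (ii): if $T$ and $S$ are uniformly ergodic then $(I-T)X$ and $(I-S)X$ are closed, hence by (\ref{closure}) both coincide with $\overline{(I-T)X}$, and in particular $(I-T)X=(I-S)X$. Therefore the space of joint coboundaries $(I-T)X\cap(I-S)X$ equals $(I-T)X$ and is closed -- this is essentially the computation in part (b) of the proof of Theorem \ref{closed}.

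For (ii) $\Rightarrow$ (iii), the key observation is that from the inclusions $(I-T)(I-S)X\subseteq (I-T)X\cap(I-S)X\subseteq (I-T)X$, whose outer terms have equal closure by (\ref{closure}), one gets
\[
\overline{(I-T)X\cap(I-S)X}=\overline{(I-T)X}=\overline{(I-S)X}.
\]
If $(I-T)X\cap(I-S)X$ is closed, it then equals $\overline{(I-S)X}$; being also contained in $(I-S)X\subseteq\overline{(I-S)X}$, this forces $(I-S)X$ to be closed, and symmetrically $(I-T)X$ to be closed, so by \cite{L} both $S$ and $T$ are uniformly ergodic. Combined with the equivalence (i) $\Leftrightarrow$ (iii) of Theorem \ref{closed}, this proves the corollary. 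There is no real obstacle here; the only subtlety is the non-reflexive setting, where Theorem \ref{closures} is unavailable and one must instead pin down the closure of the joint-coboundary space by squeezing it between the two equal closures provided by (\ref{closure}), after which everything reduces to routine manipulation of ranges and fixed spaces.
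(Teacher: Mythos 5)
Your proof is correct and follows essentially the same route as the paper: both arguments rest on the identity (\ref{closure}), the squeeze of $(I-T)X\cap(I-S)X$ between spaces with equal closure, the uniform ergodic theorem of \cite{L}, and Theorem \ref{closed} for the equivalence of (i) and (iii). The only difference is a trivial reorganization (the paper proves (i) $\Rightarrow$ (ii) directly, while you derive (iii) $\Rightarrow$ (ii) from the closed equal ranges), and your (ii) $\Rightarrow$ (iii) step is exactly the paper's argument made slightly more explicit.
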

\begin{proof} By (\ref{closure}), (i) implies 
$$
\overline{(I-T)X}=(I-T)(I-S)X \subset (I-T)X \cap (I-S)X \subset \overline{(I-T)X},
$$
which yields (ii).

By (\ref{closure}), and (ii), we have
$$
\overline{(I-T)X} = (I-T)X \cap (I-S)X \subset (I-T)X,
$$
which yields that $(I-T)X$ is closed, so $T$ is uniformly ergodic, and similarly for $S$.
\end{proof}

\begin{lem} \label{same}
Let $T$ and $S$ be commuting mean ergodic contractions on a Banach space $X$, with $F(T)=F(S)$,
and assume that $T$ is uniformly ergodic. Then:

(i)  Every coboundary of $S$, in particular every joint coboundary, is a double coboundary.

(ii) $\displaystyle{\lim_{\min(n,m) \to \infty} \|\frac1{nm}\sum_{k=0}^n\sum_{j=0}^m T^k S^j -E\|
\to 0}$, where $E$ is the projection on $F(T)$ with null space $(I-T)X$.
\end{lem}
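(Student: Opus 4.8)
The plan is to exploit that a uniformly ergodic mean ergodic contraction $T$ has $(I-T)X$ closed, so that $X=F(T)\oplus(I-T)X$, that $A_n(T)\to E$ in operator norm, and --- crucially for (i) --- that $I-T$ restricts to a bijection of the closed, $S$-invariant subspace $(I-T)X$ onto itself; call $R$ its bounded inverse there. Since $S$ commutes with $I-T$, a one-line computation shows $R$ commutes with $S$ on $(I-T)X$. First I would record, from (\ref{closure}) and $F(T)=F(S)$, that $\overline{(I-S)X}=\overline{(I-T)X}=(I-T)X$; hence every coboundary of $S$ already sits in $(I-T)X$, and the mean ergodic projection $E_S$ of $S$ (range $F(S)$, kernel $\overline{(I-S)X}$) coincides with $E$.

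For (i), take a coboundary $y=(I-S)x$ of $S$. Because $Ex\in F(T)=F(S)$ is annihilated by $I-S$, one may replace $x$ by $(I-E)x\in(I-T)X$, i.e. $y=(I-S)(I-E)x$. Set $h:=R(I-E)x\in(I-T)X$. Using that $R$ commutes with $S$ on $(I-T)X$ one gets $(I-S)h=R(I-S)(I-E)x=Ry$, and since $y\in(I-T)X$ we have $(I-T)Ry=y$; therefore $(I-T)(I-S)h=y$, so $y$ is a double coboundary. As a joint coboundary is in particular a coboundary of $S$, this proves (i).

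For (ii), by commutativity $\frac1{nm}\sum_{k=0}^n\sum_{j=0}^m T^kS^j=\frac{(n+1)(m+1)}{nm}A_{n+1}(T)A_{m+1}(S)$. Since $E=E_S$ and $E$ commutes with $S$ (its range $F(S)$ and kernel $(I-T)X$ are $S$-invariant), $EA_{m+1}(S)=E$, whence $A_{n+1}(T)A_{m+1}(S)-E=(A_{n+1}(T)-E)A_{m+1}(S)$ has norm at most $\|A_{n+1}(T)-E\|\cdot\|A_{m+1}(S)\|\le\|A_{n+1}(T)-E\|$, which tends to $0$ uniformly in $m$ by the uniform ergodicity of $T$. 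Combined with the fact that $\frac{(n+1)(m+1)}{nm}\to1$ as $\min(n,m)\to\infty$ (and $\|E\|<\infty$), this yields the claimed norm convergence.

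The only genuinely delicate point is in (i): one must choose the right preimage of $y$ under $I-T$, namely the one lying in the canonical complement $(I-T)X$, and use that the inverse of $I-T$ on that subspace commutes with $S$; everything else, including all of (ii), is routine bookkeeping with the two (coinciding) ergodic decompositions. I do not anticipate any obstruction beyond keeping track of which subspace each element lives in.
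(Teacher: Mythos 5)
Your proof is correct and follows essentially the same route as the paper: for (i) you invert $I-T$ on the closed, $S$-invariant subspace $(I-T)X$ (which equals $\overline{(I-S)X}$ because $F(T)=F(S)$) and apply this inverse to the canonical preimage of the $S$-coboundary, exactly as in the paper's use of $(I-T_{|Y})^{-1}$; for (ii) you use that $E$ is also the ergodic projection of $S$ to factor the difference as $(A_{n+1}(T)-E)A_{m+1}(S)$, which matches the paper's estimate via $SE=E$. The only cosmetic difference is that you verify the commutation of the inverse with $S$, which the paper avoids by writing the product in the order $(I-S)(I-T)h$.
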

\begin{proof} 
The assumption $F(T)=F(S)$ implies $\overline{(I-S)X}=\overline{(I-T)X}= \overline{(I-T)(I-S)X}$,
by \cite[Remark 2.5]{CL}.  

(i) Let $x=(I-S)z$ be a coboundary of $S$. By mean ergodicity of $S$, we can take (uniquely) 
$z \in \overline{(I-S)X}$. By uniform ergodicity of $T$, $Y:=(I-T)X$ is closed (and $T$ 
invariant), and $I-T$ is invertible on $Y$. Then by the above $z \in (I-T)X$, which yields that
$$
x=(I-S)z =(I-S)(I-T)(I-T_{|Y})^{-1}z
$$
is a double coboundary.

(ii) Put $M_n(T) = \frac1n\sum_{k=0}^n T^k$, and $M_m(S) =\frac1m\sum_{j=0}^m S^j$. 
By uniform ergodicity, $\|M_n(T)-E\| \to 0$. Since $T$ and $S$ commute and have the same ergodic
decomposition, $SE=E$. Hence
$$
\|M_m(S)M_n(T)-E\|=\|M_m(S)(M_n(T)-E)\| \le \|M_n(T)-E\| \to 0 
\quad \text{as } \min(n,m) \to \infty.
$$
\end{proof}
 
{\bf Remark.} The unitary operator $T$ induced on the complex $L_2$ by an ergodic invertible 
measure preserving transformation of a non-atomic probability space (with $T^k \ne I$ for 
$k \in \mathbb N$) is not uniformly ergodic, since its spectrum is $\mathbb T$ \cite{IT} 
(this is immediate for an irrational rotation of the unit circle, since the eigenvalues 
are dense in $\mathbb T$). 

%2. If $T$ and $S$ are commuting mean ergodic contractions with $F(T)=F(S)$, 
%and every coboundary of $S$ is a double coboundary, then $T$ is uniformly ergodic: Fix 
%$z \in \overline{(I-T)X}=\overline{(I-S)X}$ and put $x=(I-S)z$. Then there is 
%$y \in X$ with $x=(I-T)(I-S)y$. Hence $(I-S)[z-(I-T)y]=0$, so $z -(I-T)y \in F(T)$, 
%which yields $z=(I-T)y$, since $z \in \overline{(I-T)X}$. Thus $(I-T)X$ is closed, 
%so $T$ is uniformly ergodic by \cite{L}.

\begin{theo} \label{UE}
Let $T$ and $S$ be commuting mean ergodic contractions on a Banach space $X$ with $F(T)=F(S)$.
Then $T$ is uniformly ergodic if and only if every coboundary of $S$ is a double coboundary.
\end{theo}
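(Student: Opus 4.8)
The plan is to handle the two implications separately. The implication ``$T$ uniformly ergodic $\Longrightarrow$ every coboundary of $S$ is a double coboundary'' is exactly Lemma~\ref{same}(i), so all the work lies in the converse. Assume every coboundary of $S$ is a double coboundary; since $(I-T)(I-S)X\subseteq(I-S)X$ always holds, this is the same as the equality $(I-S)X=(I-T)(I-S)X$. By \cite{L} it suffices to prove that $(I-T)X$ is closed.

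First I would localize to $W:=\overline{(I-T)X}$. As $T$ and $S$ commute, $W$ is $T$- and $S$-invariant; as $F(T)=F(S)$, the ergodic decomposition (\ref{dec}) (see also \cite[Remark 2.5]{CL}) gives $W=\overline{(I-S)X}=\overline{(I-T)(I-S)X}$ and $X=F(T)\oplus W$. Since fixed vectors of $T$ (equivalently of $S$) are annihilated by $I-T$ and by $I-S$, one gets $(I-T)X=(I-T)W$, $(I-S)X=(I-S)W$ and $(I-T)(I-S)X=(I-T)(I-S)W$; moreover $F(T|_W)=F(S|_W)=\{0\}$, so $P:=I-T|_W$ and $Q:=I-S|_W$ are \emph{injective} on $W$. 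Because $(I-T)X=P(W)$ is dense in $W$, it is closed precisely when $P(W)=W$, so the goal reduces to: $P$ maps $W$ onto $W$.

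The core of the argument is then purely algebraic. Rewriting the hypothesis: $(I-S)X=Q(W)$ and $(I-T)(I-S)X=P\bigl(Q(W)\bigr)$, so the hypothesis says $P\bigl(Q(W)\bigr)=Q(W)$, and since $P$ is injective this means that $P$ restricts to a \emph{bijection} of the subspace $Q(W)\subseteq W$. On the other hand $Q\colon W\to Q(W)$ is itself a linear bijection (injectivity of $Q$), and since $T$ and $S$ commute we have $PQ=QP$, whence $Q^{-1}\circ\bigl(P|_{Q(W)}\bigr)\circ Q=P$ on $W$. As $Q$ and $Q^{-1}$ are bijections, bijectivity of $P|_{Q(W)}$ forces $P=Q^{-1}\bigl(P|_{Q(W)}\bigr)Q$ to be a bijection of $W$, i.e. $(I-T)W=W$. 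Hence $(I-T)X=(I-T)W=W$ is closed, and $T$ is uniformly ergodic by \cite{L}.

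The one genuine subtlety to watch is that $(I-S)X$ and $(I-T)X$ need not be closed, so one cannot simply pick an element of $\overline{(I-S)X}\setminus(I-T)X$ and argue with it; the point is that the hypothesis is strictly stronger than $(I-S)X\subseteq(I-T)X$ — it forces $I-T$ to carry the (possibly non-closed, dense) range $Q(W)$ \emph{onto itself}, and this surjectivity can be transported through the \emph{unbounded} map $Q^{-1}$ because the transport is algebraic, using no continuity of $Q^{-1}$. Once the decomposition $X=F(T)\oplus W$ is in hand, checking the identities $(I-T)X=(I-T)W$, $(I-S)X=Q(W)$, $(I-T)(I-S)X=P(Q(W))$ and $Q^{-1}\bigl(P|_{Q(W)}\bigr)Q=P$ is routine, so the only real idea is the passage to $W$ together with the conjugation by $Q$.
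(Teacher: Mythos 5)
Your proof is correct and is essentially the paper's argument in operator-theoretic dress: unwinding your conjugation identity $P=Q^{-1}\bigl(P|_{Q(W)}\bigr)Q$ on a single $w\in W$ reproduces exactly the paper's element chase (take $z\in\overline{(I-T)X}$, write $(I-S)z=(I-T)(I-S)y$ by the hypothesis, and use $F(T)=F(S)$ with the mean ergodic decomposition to cancel $I-S$ and conclude $z\in(I-T)X$). Both routes rest on the same ingredients: $\overline{(I-T)X}=\overline{(I-S)X}$ from \cite[Remark 2.5]{CL}, injectivity of $I-S$ on that subspace, and \cite{L} to pass from closedness of $(I-T)X$ to uniform ergodicity.
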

\begin{proof} If $T$ is uniformly ergodic, we apply Lemma \ref{same}(i). 

Assume that every coboundary of $S$ is a double coboundary.
Fix $z \in \overline{(I-T)X}=\overline{(I-S)X}$ and put $x=(I-S)z$. By assumption, there 
is  $y \in X$ with $x=(I-T)(I-S)y$. Hence $(I-S)[z-(I-T)y]=0$, so $z -(I-T)y \in F(T)$, 
which yields $z=(I-T)y$, since $z \in \overline{(I-T)X}$. Thus $(I-T)X$ is closed, 
so $T$ is uniformly ergodic by \cite{L}.
\end{proof}

{\bf Remark.} Lemma \ref{same}(i) yields that when $F(T)=F(S)$, a necessary condition for the 
existence of a joint coboundary which is not a double coboundary is that neither $T$ nor $S$
be uniformly ergodic.

\begin{cor} \label{joint1}
Let $T$ and $S$ be commuting mean ergodic contractions on a Banach space $X$ with $F(T)=F(S)$, 
and assume $T$ is not uniformly ergodic. If $(I-S)X \subset (I-T)X$, then there exists a joint 
coboundary which is not a double coboundary.
\end{cor}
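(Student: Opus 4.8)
The plan is to deduce this from Theorem \ref{UE} by a short contradiction argument, after first identifying what the joint coboundaries are under the extra hypothesis. The key preliminary observation is that the containment $(I-S)X \subset (I-T)X$ forces
$$
(I-T)X \cap (I-S)X = (I-S)X ,
$$
since $(I-T)X \cap (I-S)X \subset (I-S)X$ is automatic and the reverse inclusion is exactly the hypothesis. Thus, under the assumptions of the corollary, ``joint coboundary of $T$ and $S$'' and ``coboundary of $S$'' mean the same thing.

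With that in hand, I would argue as follows. Suppose, for contradiction, that every joint coboundary is a double coboundary. By the identity just noted, this says that every coboundary of $S$ lies in $(I-T)(I-S)X$. Since $T$ and $S$ are commuting mean ergodic contractions with $F(T)=F(S)$, Theorem \ref{UE} applies and its ``if'' direction yields that $T$ is uniformly ergodic. This contradicts the standing assumption that $T$ is not uniformly ergodic. Hence not every joint coboundary is a double coboundary; that is, there exists a joint coboundary which is not a double coboundary, which is precisely the assertion. (Equivalently, one may skip the contradiction and invoke the contrapositive of Theorem \ref{UE} directly: since $T$ is not uniformly ergodic, some coboundary of $S$ — hence some joint coboundary — fails to be a double coboundary.)

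I do not expect a genuine obstacle here: all the substance is packed into Theorem \ref{UE} (and, through it, into the characterization of uniform ergodicity via closedness of $(I-T)X$), and the only new ingredient is the elementary set-theoretic reduction ``$(I-S)X\subset(I-T)X$ makes joint coboundaries coincide with coboundaries of $S$.'' One mild point worth a sentence in the write-up is that the relevant hypotheses of Theorem \ref{UE} ($T,S$ commuting mean ergodic contractions, $F(T)=F(S)$) are inherited verbatim from the corollary, so nothing extra needs checking. If an explicit witness were desired rather than an existence proof, one could try to build one from a coboundary of $S$ whose solution of $(I-S)z=x$ cannot be taken in $(I-T)X$ (the obstruction produced inside the proof of Theorem \ref{UE}), but for the stated existence claim the contradiction argument is the cleanest route.
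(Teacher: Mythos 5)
Your proposal is correct and follows essentially the same route as the paper: the paper also invokes Theorem \ref{UE} (in its contrapositive form, since $T$ is not uniformly ergodic) to get a coboundary of $S$ that is not a double coboundary, and then uses $(I-S)X \subset (I-T)X$ to see it is a joint coboundary. Your contradiction framing and the explicit remark that joint coboundaries coincide with coboundaries of $S$ under the containment are just a mild repackaging of that same argument.
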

\begin{proof}
Since $T$ is not uniformly ergodic, 
by Theorem \ref{UE} there exists $y \in (I-S)X$ which is not a double coboundary. Then $y$
is a joint coboundary, since $(I-S)X \subset (I-T)X$. 
\end{proof}

%$(I-T)X$ is not closed, and we can take $x \in \overline{(I-T)X}$ which is not in 
%$(I-T)X$ (so $x \ne 0$). Put $y=(I-S)x$.  Then $y \ne 0$, since
%$$ 
%y=0 \Longrightarrow Sx=x \Longrightarrow Tx=x \Longrightarrow x \in F(T) \cap \overline{(I-T)X}
%\Longrightarrow x=0. $$
%Since $(I-S)X \subset (I-T)X$, $\ y$ is a joint coboundary. If $y=(I-T)(I-S)z$, 
%then $(I-S)[(I-T)z-x]=0$, so $x -(I-T)z \in F(S)=F(T)$. Since $x \in \overline{(I-T)X}$,
%we have $x =(I-T)z $, contradicting the choice of $x$.

{\bf Remark.}  If $T$ and $S$ are induced by invertible ergodic probability preserving 
transformations, then by Kornfeld \cite[Theorem 2]{Ko}, the assumption $(I-S)X \subset (I-T)X$ 
implies that $S=T^k$ for some $k \in \mathbb Z$. Therefore, if $T \ne S^k$ and $S \ne T^k$, 
then there are coboundaries of $T$ and of $S$ which are not joint coboundaries.

\begin{theo} \label{powers}
Let $R$ be a mean ergodic contraction, which is not uniformly ergodic,  on a Banach space $X$,
and let $T=R^k$ and $S=R^j$ be mean ergodic (e.g. $X$ is reflexive), with $F(T)=F(S)=F(R)$. 
Then $T$ and $S$ have a joint coboundary which is not a double coboundary.
\end{theo}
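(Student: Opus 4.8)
The plan is to exhibit, inside the space of joint coboundaries, an explicit subspace that contains every double coboundary but is strictly larger; the required joint coboundary that is not a double coboundary is then any element of the difference. Concretely, let $d=\gcd(k,j)$. Since $\gcd(1-x^k,1-x^j)=1-x^d$ in $\mathbb{Q}[x]$, the polynomial $L(x):=(1-x^k)(1-x^j)/(1-x^d)$ is their least common multiple; it is squarefree, satisfies $L(x)=(1-x^k)\cdot\frac{1-x^j}{1-x^d}=(1-x^j)\cdot\frac{1-x^k}{1-x^d}$, and $(1-x^k)(1-x^j)=(1-x^d)L(x)$. Substituting $R$, one gets first that $L(R)X\subseteq(I-R^k)X\cap(I-R^j)X$, so every element of $L(R)X$ is a joint coboundary of $T=R^k$ and $S=R^j$, and second that the double coboundaries satisfy $(I-R^k)(I-R^j)X=(I-R^d)L(R)X\subseteq L(R)X$. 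Hence it suffices to show $(I-R^d)L(R)X\neq L(R)X$.

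A routine verification shows $(I-R^d)L(R)X=L(R)X$ if and only if $X=(I-R^d)X+\ker L(R)$, so the next step is to control $\ker L(R)$. Writing $L=\prod_{e\,\mid\,k\ \text{or}\ e\,\mid\,j}\Phi_e$ as a product of pairwise coprime cyclotomic polynomials, one has $\ker L(R)=\bigoplus_e\ker\Phi_e(R)$ with $\ker\Phi_1(R)=F(R)$. If $e\ge 2$ and $e\mid d$, then $\Phi_e\mid x^d-1$, so $R^d$ acts as the identity on $\ker\Phi_e(R)$; thus $\ker\Phi_e(R)\subseteq F(R^d)=F(R)$ (here is the one place the hypothesis $F(R^k)=F(R)$, hence $F(R^d)=F(R)$, is used, since $d\mid k$), and since $\Phi_e$ is coprime to $\Phi_1$ this forces $\ker\Phi_e(R)=\{0\}$. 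If $e\ge 2$ and $e\nmid d$, then $\Phi_e(x)$ and $x^d-1$ are coprime, which yields a polynomial $b$ with $b(R)(R^d-I)=I$ on $\ker\Phi_e(R)$, so $I-R^d$ is invertible there and $\ker\Phi_e(R)=(I-R^d)\ker\Phi_e(R)\subseteq(I-R^d)X$. Consequently $(I-R^d)X+\ker L(R)=(I-R^d)X+F(R)$.

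It remains to check $(I-R^d)X+F(R)\neq X$. Since $R$ is mean ergodic, $X=F(R)\oplus\overline{(I-R)X}$ with a bounded projection $Q$ onto $\overline{(I-R)X}$ along $F(R)$; and $(I-R^d)X\subseteq(I-R)X\subseteq\overline{(I-R)X}$, so applying $Q$ to a hypothetical equality $(I-R^d)X+F(R)=X$ would give $(I-R^d)X=\overline{(I-R)X}$, hence $(I-R)X=\overline{(I-R)X}$ would be closed, contradicting the assumption that $R$ is not uniformly ergodic. Therefore $(I-R^d)L(R)X\subsetneq L(R)X$, and any $z\in L(R)X\setminus(I-R^k)(I-R^j)X$ is a joint coboundary of $T$ and $S$ that is not a double coboundary.

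The heart of the argument, and where I expect the main effort, is the second step: pinning down $\ker L(R)$ exactly and seeing that everything in it other than $F(R)$ is automatically absorbed into $(I-R^d)X$. This is precisely the point at which the hypothesis $F(R^k)=F(R)$ is essential — it kills the eigenvectors of $R$ whose eigenvalue is a nontrivial root of unity of order dividing $d$, which would otherwise obstruct the inclusion $\ker L(R)\subseteq(I-R^d)X+F(R)$ and could even make $(I-R^d)L(R)X=L(R)X$. The polynomial bookkeeping and the final closed‑range dichotomy are then routine.
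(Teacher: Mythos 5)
Your proof is correct, and it takes a genuinely different route from the paper's. The paper argues by reduction: Corollary \ref{joint1} (which rests on Theorem \ref{UE}) is applied to the pair $(R,S)$ --- possible because $(I-R^j)X\subseteq(I-R)X$ and $R$ is not uniformly ergodic --- to produce $u=(I-R)x=(I-S)y$ which is a joint but not a double coboundary of $R$ and $S$; then $v=(I-T)x=\sum_{n=0}^{k-1}R^nu$ is a joint coboundary of $T$ and $S$, and if $v=(I-T)(I-S)z$ then $x-(I-S)z\in F(T)=F(R)$, which forces $u=(I-R)(I-S)z$, a contradiction. You instead argue directly with polynomial identities in $R$: with $d=\gcd(k,j)$ and $L$ the least common multiple of $1-x^k$ and $1-x^j$, you note $L(R)X\subseteq(I-T)X\cap(I-S)X$ and $(I-T)(I-S)X=(I-R^d)L(R)X$, reduce strictness of the inclusion to $X\neq(I-R^d)X+\ker L(R)$, compute $\ker L(R)$ through the pairwise coprime cyclotomic factors (the fixed-point hypothesis, in the form $F(R^d)=F(R)$, kills the root-of-unity eigenspaces for $e\mid d$, and a Bezout identity absorbs the remaining kernels into $(I-R^d)X$), and conclude with the mean ergodic splitting $X=F(R)\oplus\overline{(I-R)X}$ and Lin's uniform ergodic theorem \cite{L} --- the same two ingredients the paper uses inside Theorem \ref{UE}. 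What your route buys: an explicit identification of the double coboundaries of $R^k,R^j$ as exactly $(I-R^d)L(R)X$, with the witnesses located inside $L(R)X$, and formally weaker hypotheses (mean ergodicity of $T$ and $S$ is never invoked, and only $F(R^d)=F(R)$ is needed rather than $F(R^k)=F(R^j)=F(R)$). What the paper's route buys: brevity, since Corollary \ref{joint1} is already available and applies beyond powers of a single operator, whereas your cyclotomic bookkeeping is tailored to the case $T=R^k$, $S=R^j$.
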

\begin{proof} 
Since $R$ is not uniformly ergodic, neither are $T$ nor $S$. By Theorem \ref{joint1}, there is 
a joint coboundary $u$ for $R$ and $S$ which is not a double coboundary for them. We put
$u=(I-R)x=(I-S)y$, and define $v=(I-T)x$. Then
$$
v=(I-T)x=(I-R^k)x= \sum_{n=0}^{k-1}R^n(I-R)x =\sum_{n=0}^{k-1}R^k(I-S)y = \sum_{n=0}^{k-1}R^ku,
$$
so $v$ is a joint coboundary for $T$ and $S$. We show it is not a double coboundary of 
$T$ and $S$. Assume $v=(I-T)(I-S)z$. Then 
$$
\sum_{n=0}^{k-1}R^ku = v=\sum_{n=0}^{k-1} R^k(I-R)(I-S)z,
$$
which yields $(I-R^k)[u-(I-R)(I-S)z]=0$, so $u-(I-R)(I-S)z \in F(T)=F(R)$, so we obtain
$$
(I-R)x= u=[u-(I-R)(I-S)z]+ (I-R)(I-S)z.
$$
Uniqueness in the ergodic decomposition (with respect to $R$) yields $u=(I-R)(I-S)z$, which
means that $u$ is a double coboundary of $R$ and $S$, contradicting the choice of $u$.
\end{proof}

{\bf Remark.} If $T$ is a mean ergodic contraction, taking $S=T$ we obtain that  $T$
is uniformly ergodic if and only if $(I-T)X=(I-T)^2X$.

\section{Joint coboundaries of commuting measure-preserving transformations}

In this section we show that for  commuting invertible measure-preserving transformations 
$\theta$ and $\tau$ of a standard probability space $(\Omega, \B, \P)$ which generate an
aperiodic $\mathbb Z^2$ action, their induced unitary operators on $L_2(\Omega,\P)$ 
have a joint coboundary in $L_2$ which is not a double coboundary in $L_2 $. 
\smallskip

A bounded operator $T$ on a Banach space is called {\it aperiodic} if $T^n \ne I$ for any 
$n \in\mathbb N$  (see \cite{Fo}).  If $T^k=I$, then $T$ is power-bounded and uniformly ergodic.
Hence for any commuting mean ergodic contractions with $F(T)=F(S)$, to have joint 
coboundaries which are not double coboundaries, it is necessary that both operators be 
aperiodic (by Lemma \ref{same}).
A probability preserving transformation $\theta$ is called {\it aperiodic} if $\theta^n \ne id$ 
for any $n\ge 1$, i.e. its induced operator on $L_2$ is aperiodic
(a more restrictive definition is given in \cite{IT}). 
%If $\theta^n=id$, then the induced operator $U$ is easily shown to be uniformly ergodic. 
Ergodic probability preserving transformations of a standard probability space are aperiodic.

If $\theta$ and $\tau$ are invertible probability preserving transformations on $(\Omega,\P)$, 
we say that the $\mathbb Z^2$-action they generate is {\it aperiodic} (see \cite{Co}, \cite{KW}) 
if for $j,k \in\mathbb Z$ which are not both zero, $\P(\{x \in \Omega: \theta^j\tau^kx=x\}) =0$;
in that case, the induced unitary operators $U$ and $V$ satisfy $U^jV^k \ne I$ whenever
$j$ and $k$ are not both zero.
\medskip

Let $U$ and $V$ be commuting unitary operators on a complex Hilbert space $H$. They generate
a unitary representation of $\mathbb Z^2$, to which we apply the general Stone spectral theorem 
(e.g. \cite{Am}) to obtain:
 {\it There exists a (unique) projection valued spectral measure $E(\cdot)$ on the Borel sets
of $\mathbb T^2 = \widehat{\mathbb Z^2}$ such that (in the strong operator topology)}
$$
U^nV^m = \int_{\mathbb T^2} z_1^nz_2^m dE(z_1,z_2) \qquad n,m \in \mathbb Z.
$$
Hence $P(U,V)= \int_{\mathbb T^2} P(z_1,z_2) dE(z_1,z_2)$ for every polynomial $P$ in two
commuting variables.
We denote by $\sigma_f(\cdot):=\langle E(\cdot)f,f \rangle$ the spectral measure of $f \in H$,
and obtain that $\|P(U,V)f\|^2 = \int_{\mathbb T^2} |P(z_1,z_2)|^2 d\sigma_f(z_1,z_2)$.
\smallskip

The spectral measures of $U$ and $V$ are obtained from $E$ by $E_U(B)=E(B\times \mathbb T)$
and $E_V(B)=E(\mathbb T \times B)$ for Borel subsets of $\mathbb T$ (see \cite[Theorem 3]{Am}).

\begin{prop} \label{spectral-cob}
Let $U$ and $V$ be commuting unitary operators on a complex Hilbert space $H$, and $f \in H$.

(i) $f \in (I-U)H$ if and only if 
$\displaystyle{ \int_{\mathbb T^2} \frac{d\sigma_f(z_1,z_2)}{|z_1-1|^2} < \infty.}$
\smallskip

(ii) $f \in (I-U)(I-V)H$ if and only if
$\displaystyle{ \int_{\mathbb T^2} \frac{d\sigma_f(z_1,z_2)}{|z_1-1|^2|z_2-1|^2} < \infty.}$
\end{prop}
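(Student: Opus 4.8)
The plan is to use the spectral theorem for the commuting unitary pair $(U,V)$ to reduce the operator equation $(I-U)g = f$ (resp. $(I-U)(I-V)h = f$) to a pointwise division problem in $L_2(\mathbb{T}^2, \sigma_f)$, and then to invoke Browder's theorem (\ref{browder}) — or rather its two-dimensional analogue (\ref{CL}) — to replace the existence of a solution by a uniform boundedness condition that can be checked spectrally. First I would record the elementary computation: since $U^n = \int_{\mathbb{T}^2} z_1^n\, dE$, one has
$$
\sum_{k=0}^{n-1} U^k f = \int_{\mathbb{T}^2} \frac{1-z_1^n}{1-z_1}\, dE(z_1,z_2) f,
$$
and hence, by the isometry property of the spectral measure,
$$
\Big\|\sum_{k=0}^{n-1} U^k f\Big\|^2 = \int_{\mathbb{T}^2} \Big|\frac{1-z_1^n}{1-z_1}\Big|^2\, d\sigma_f(z_1,z_2).
$$
For (i): Browder's theorem says $f \in (I-U)H$ iff $\sup_n \|\sum_{k=0}^{n-1} U^k f\| < \infty$ (here $H$ is a Hilbert space, hence reflexive, and $U$ is power-bounded, so (\ref{browder}) applies). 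So the task is to show that $\sup_n \int |\frac{1-z_1^n}{1-z_1}|^2 d\sigma_f < \infty$ is equivalent to $\int \frac{d\sigma_f}{|z_1-1|^2} < \infty$. The bound $|1-z_1^n| \le 2$ gives one direction immediately (the finite integral forces the sup to be bounded by $4\int d\sigma_f/|z_1-1|^2$, using also that $\sigma_f$ is finite so the region $|z_1-1|$ bounded below contributes nothing problematic). For the converse, I would use Fatou's lemma together with the fact that for each fixed $z_1 \ne 1$, $\frac{1}{n}\sum_{m=0}^{n-1}|\frac{1-z_1^m}{1-z_1}|^2 \to \frac{1}{|1-z_1|^2}$ (Cesàro limit of $|1-z_1^m|^2 = 2 - z_1^m - \bar z_1^m$, whose Cesàro average is $2$); averaging the sup-bounded quantities over $n$ keeps them bounded, and Fatou then yields $\int \frac{d\sigma_f}{|z_1-1|^2} < \infty$. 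One must note $\sigma_f(\{z_1 = 1\} \times \mathbb{T}) = 0$ is automatic whenever the integral is finite, and conversely if $f \in (I-U)H$ then $f \perp F(U)$ so that atom vanishes too; this point deserves a sentence.

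For (ii) the argument is the same with the double averages: by (\ref{CL}), $f \in (I-U)(I-V)H$ iff $\sup_n \|\sum_{k=0}^{n-1}\sum_{j=0}^{n-1} U^k V^j f\| < \infty$, and spectrally
$$
\Big\|\sum_{k=0}^{n-1}\sum_{j=0}^{n-1} U^k V^j f\Big\|^2 = \int_{\mathbb{T}^2} \Big|\frac{1-z_1^n}{1-z_1}\Big|^2 \Big|\frac{1-z_2^n}{1-z_2}\Big|^2 d\sigma_f(z_1,z_2).
$$
The "easy" direction again uses $|1-z_1^n|,|1-z_2^n| \le 2$ to bound the integral by $16\int \frac{d\sigma_f}{|z_1-1|^2|z_2-1|^2}$. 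For the converse I would average over $n$ and apply Fatou, but here the subtlety is that the product $\frac1n\sum_{m<n}|\frac{1-z_1^m}{1-z_1}|^2|\frac{1-z_2^m}{1-z_2}|^2$ does not factor; its pointwise liminf is at least a constant multiple of $\frac{1}{|1-z_1|^2|1-z_2|^2}$ when both $z_1,z_2 \ne 1$, but getting the clean constant requires a small Cesàro computation or a Cauchy–Schwarz splitting of the double average. I expect this — extracting the right lower bound for the liminf of the non-factoring double average, while handling the coordinate axes $\{z_1=1\}$ and $\{z_2=1\}$ (which carry no mass once the relevant integral is finite, and are null in any case once $f$ is known to be a double coboundary) — to be the main technical obstacle; everything else is bookkeeping with the spectral measure.

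Alternatively, and perhaps more cleanly, one can prove (i) and (ii) directly without invoking Browder: $f \in (I-U)H$ iff there is $g$ with $(I-U)g = f$, i.e. iff the function $(z_1,z_2) \mapsto f/(1-z_1)$ — interpreted via the spectral representation — defines an element of $H$, which by the spectral theorem happens precisely when $\int |1-z_1|^{-2} d\sigma_f < \infty$; the map $g \mapsto (I-U)g$ corresponds to multiplication by $(1-z_1)$ on the spectral side, so its range consists exactly of those $f$ whose "quotient by $1-z_1$" is square-integrable against $\sigma_f$. The same reasoning with the multiplier $(1-z_1)(1-z_2)$ gives (ii). I would present this direct functional-calculus argument as the primary proof and mention the Browder-based route as a remark, since the direct one sidesteps the double-average Cesàro estimate entirely; the only care needed is to verify that $f/(1-z_1)$, a priori only defined off the null set $\{z_1=1\}$, genuinely lies in $H$ and satisfies $(I-U)g=f$, which is exactly the content of the finiteness of the integral.
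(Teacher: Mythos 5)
Your proposal is correct, but your preferred (functional-calculus) route is genuinely different from the paper's. The paper proves (ii) through the characterization from \cite[Theorem 3.1]{CL}: $f\in(I-U)(I-V)H$ iff $\sup_n\|\sum_{k=0}^{n-1}\sum_{j=0}^{n-1}U^kV^jf\|<\infty$ — exactly your first route; the easy direction is the same $|1-z_i^n|\le 2$ bound, and for the converse the paper sidesteps the non-factoring double average you worry about by averaging rectangular blocks with \emph{independent} indices, $\frac1{N^2}\sum_{n,m=1}^N\sum_{k<m}\sum_{j<n}U^kV^jf$, whose spectral symbol factors as $\big(\frac1N\sum_{m\le N}(z_1^m-1)\big)\big(\frac1N\sum_{n\le N}(z_2^n-1)\big)\frac{1}{(z_1-1)(z_2-1)}$; each Cesàro factor tends to $-1$ off $\{z_i=1\}$, the two axes carry no $\sigma_f$-mass (because $E_U(\{1\})$ and $E_V(\{1\})$ annihilate $f$ once $f$ is a double coboundary), and Fatou finishes. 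Your same-index average would also work — the Cesàro limit of $|1-z_1^m|^2|1-z_2^m|^2$ is at least $4$ off the axes, since the cross term $4\Re(z_1^m)\Re(z_2^m)$ has nonnegative Cesàro limit — so the ``main technical obstacle'' you flag is only a short computation, but the paper's independent-index trick removes it entirely. Your direct argument (multiplication by $1-z_1$, resp.\ $(1-z_1)(1-z_2)$, on the spectral side) is cleaner still and bypasses \eqref{CL}: finiteness of the integral forces $\sigma_f$ to vanish on the axes, and $h:=\int\frac{1}{(1-z_1)(1-z_2)}\,dE\,f$ lies in the cyclic subspace $Z(f)$ with $(I-U)(I-V)h=f$; for the necessity direction, since the solution $h$ need not lie in $Z(f)$ and $H$ is not globally an $L_2(\mu)$ without multiplicity theory, you should argue via $d\sigma_{(I-U)(I-V)h}=|1-z_1|^2|1-z_2|^2\,d\sigma_h$, which gives $\int\frac{d\sigma_f}{|z_1-1|^2|z_2-1|^2}=\sigma_h(\{z_1\ne1,\,z_2\ne1\})\le\|h\|^2<\infty$ — make that explicit rather than asserting that the range of the multiplier is ``exactly'' the stated set. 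In short: the paper's route needs only the boundedness criterion \eqref{CL} plus Fatou; yours is shorter and independent of \eqref{CL}, at the cost of invoking the unbounded functional calculus on the cyclic subspace.
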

\begin{proof} (i) is a well-known consequence of Browder's theorem  \cite{Br}.
The details of the proof will be clear from the proof of (ii) below.
\smallskip

It was proved in \cite[Theorem 3.1]{CL} that $ f \in (I-U)(I-V)H$ if and only if
$$
\sup_n \|\sum_{k=0}^{n-1}\sum_{j=0}^{n-1}U^kV^jf\| < \infty.
$$
Assume convergence of the integral in (ii); it implies that 
$\sigma_f(\{1\}\times\mathbb T)= \sigma_f(\mathbb T\times\{1\})=0$, so
$$ 
\|\sum_{k=0}^{n-1}\sum_{j=0}^{n-1}U^kV^jf\|^2 = 
\int_{\mathbb T^2} |\sum_{k=0}^{n-1}\sum_{j=0}^{n-1}z_1^kz_2^j |^2 d\sigma_f(z_1,z_2) =
\int_{\mathbb T^2} \frac{|z_1^n-1|^2|z_2^n-1|^2}{|z_1-1|^2|z_2-1|^2} d\sigma_f(z_1,z_2) .
$$
This yields 
$$\sup_n \|\sum_{k=0}^{n-1}\sum_{j=0}^{n-1}U^kV^jf\|^2  \le
16 \int_{\mathbb T^2} \frac{d\sigma_f(z_1,z_2)}{|z_1-1|^2|z_2-1|^2} < \infty,
$$ 
which proves that $f \in (I-U)(I-V)H$.
\smallskip

Assume now that $f \in (I-U)(I-V)H$. Then
$$
\sup_N \big\| \frac1{N^2} \sum_{n=1}^N\sum_{m=1}^N
\sum_{k=0}^{m-1}\sum_{j=0}^{n-1}U^kV^jf \big\|^2 =M < \infty.
$$
%$\sup_N \|\frac1N\sum_{n=1}^N \sum_{k=0}^{n-1}\sum_{j=0}^{n-1}U^kV^jf\|^2 =M < \infty$.
Since the spectral measure of $U$ is $E_U(B)=E(B\times \mathbb T)$, we have 
$E(\{1\}\times\mathbb T)g=E_U(\{1\})g=0$ when $g \in (I-U)H$. Hence $f \in (I-U)(I-V)H$ yields
$\sigma_f(\{1\}\times \mathbb T)=\sigma_f(\mathbb T \times\{1\}) =0$, so in particular
$\sigma_f(\{(1,1)\})=0$.
$$
\big\| \frac1{N^2} \sum_{n=1}^N\sum_{m=1}^N \sum_{k=0}^{m-1}\sum_{j=0}^{n-1}U^kV^jf \big\|^2 =
\int_{\mathbb T^2} 
\big| \frac1{N^2} \sum_{n=1}^N\sum_{m=1}^N \sum_{k=0}^{m-1}\sum_{j=0}^{n-1}z_1^kz_2^j \big|^2 
d\sigma_f(z_1,z_2)=
$$
$$
\int_{\mathbb T^2} 
%\big| \frac1{N^2} \sum_{n=1}^N\sum_{m=1}^N \sum_{k=0}^{m-1}\sum_{j=0}^{n-1}z_1^kz_2^j \big|^2 
\Big|\Big(\frac1N\sum_{m=1}^N (z_1^m-1)\Big)\Big(\frac1N \sum_{n=1}^N(z_2^n-1)\Big)\Big|^2\
\frac1{|z_1-1|^2|z_2-1|^2}  d\sigma_f(z_1,z_2).
$$
%$$
%\|\frac1N\sum_{n=1}^N \sum_{k=0}^{n-1}\sum_{j=0}^{n-1}U^kV^jf\|^2 =
%\int_{\mathbb T^2} 
%\Big|\frac1N\sum_{n=1}^N \sum_{k=0}^{n-1}\sum_{j=0}^{n-1}z_1^kz_2^j\Big|^2 d\sigma_f(z_1,z_2)=
%$$
%$$
%\int_{\mathbb T^2} 
%\Big|\frac1N\sum_{n=1}^N (z_1^n-1)(z_2^n-1)\Big|^2\frac1{|z_1-1|^2|z_2-1|^2}  d\sigma_f(z_1,z_2).
%$$
%Since the spectral measure of $U$ is $E_U(B)=E(B\times \mathbb T)$, we have 
%$E(\{1\}\times\mathbb T)g=0$ when $g \in (I-U)H$. Hence $f \in (I-U)(I-V)H$ yields
%$\sigma_f(\{1\}\times \mathbb T)=\sigma_f(\mathbb T \times\{1\}) =0$, so
%The limit $\lim_{N\to\infty}\Big|\frac1N\sum_{n=1}^N (z_1^n-1)(z_2^n-1)\Big|^2 $ exists 
The limit $\lim_{N\to \infty} 
\Big|\big(\frac1N\sum_{m=1}^N (z_1^m-1)\big)\big(\frac1N \sum_{n=1}^N(z_2^n-1)\big)\Big|^2$ exists
for every $(z_1,z_2) \in \mathbb T$; it is $0$ when $z_1=1$ or $z_2=1$, 
%it is 4 if $z_1=\bar z_2 \ne 1$,
and 1 otherwise; hence the limit is 1 $\sigma_f$ a.e.  By Fatou's lemma we have
$$
\int_{\mathbb T^2} \frac{d\sigma_f(z_1,z_2)}{|z_1-1|^2|z_2-1|^2} =
$$
$$
 \int_{\mathbb T^2} \lim_{N\to \infty} 
\Big|\Big(\frac1N\sum_{m=1}^N (z_1^m-1)\Big)\Big(\frac1N \sum_{n=1}^N(z_2^n-1)\Big)\Big|^2\
\frac1{|z_1-1|^2|z_2-1|^2}  d\sigma_f(z_1,z_2)
%\Big|\frac1N\sum_{n=1}^N (z_1^n-1)(z_2^n-1)\Big|^2 \frac{d\sigma_f(z_1,z_2)}{|z_1-1|^2|z_2-1|^2} 
\le
$$
$$
\liminf_{N\to \infty}  \int_{\mathbb T^2} 
\Big|\Big(\frac1N\sum_{m=1}^N (z_1^m-1)\Big)\Big(\frac1N \sum_{n=1}^N(z_2^n-1)\Big)\Big|^2\
\frac1{|z_1-1|^2|z_2-1|^2}  d\sigma_f(z_1,z_2)
%\Big|\frac1N\sum_{n=1}^N (z_1^n-1)(z_2^n-1)\Big|^2\frac1{|z_1-1|^2|z_2-1|^2} d\sigma_f(z_1,z_2) 
\le M,
$$
which proves (ii).
\end{proof}

It follows from Proposition \ref{spectral-cob}(i) that  $f$ is a joint coboundary if and only if
\begin{equation} \label{spectral-joint}
\int_{\mathbb T^2} \frac{|z_1-1|^2 +|z_2-1|^2}{|z_1-1|^2|z_2-1|^2} d\sigma_f(z_1,z_2) <\infty.
\end{equation}
The problem of finding a joint coboundary for $U$ and $V$ which is not a double coboundary is 
therefore the problem of finding $0 \ne f \in H$ such that  $\sigma_f$ satisfies
(\ref{spectral-joint}) and the integral in Proposition \ref{spectral-cob}(ii) diverges. 
This requires a deeper study of spectral measures, summarized below.
\medskip

Abstract (orthogonal projection valued) spectral measures (called spectral families 
in \cite{Am}), defined on a measurable space $(S,\Sigma)$ (and not necessarily connected 
to any unitary representation or any operator), were studied in the books of Halmos 
\cite{Hal} and Nadkarni \cite{Na}.
We fix a complex Hilbert space $H$ and a  spectral measure $E(\cdot)$ with values in 
$B(H)$, and define as before the spectral measure of $f \in H$ by 
$\sigma_f(\cdot):=\langle E(\cdot)f,f \rangle$, which is a positive finite measure. 
For $f \in H$ we define the {\it cyclic subspace}  $Z(f)$ generated by $f$ as the 
closed linear manifold generated by $\{E(A)f: A \in \Sigma\}$.  The orthogonal projection 
on $Z(f)$  commutes with $E(\cdot)$ \cite[p. 91]{Hal}; hence also $Z(f)^\perp$ is invariant 
under $E(\cdot)$, and $Z(g) \perp Z(f)$ for $g\perp Z(f)$.

\begin{theo} \label{maximal-spectral}
Let $H$ be a separable complex Hilbert space. Then there exists a vector $\psi \in H$ such that 
$\sigma_\psi(A)=0$ if and only if $E(A)=0$; hence $\sigma_g << \sigma_\psi$ for every $g \in H$.
\end{theo}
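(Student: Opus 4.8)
The plan is to reduce the statement to the classical cyclic decomposition of a separable Hilbert space with respect to a spectral measure, and then to glue the cyclic generators together with square-summable weights.

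First I would invoke (or, if one prefers, reconstruct) the classical fact that $H$ splits as a countable orthogonal direct sum $H=\bigoplus_{n\ge 1}Z(f_n)$ of cyclic subspaces; this is in \cite{Hal} and \cite{Na}. To rebuild it: fix a countable dense sequence $\{h_n\}$ in $H$ (separability), and define the $f_n$ inductively — take $f_1$ to be the first nonzero $h_j$, and, having chosen $f_1,\dots,f_k$, let $f_{k+1}$ be the orthogonal projection onto $\big(\bigoplus_{i\le k}Z(f_i)\big)^\perp$ of the first $h_j$ not already in $\bigoplus_{i\le k}Z(f_i)$. Here one uses that the projection onto a cyclic subspace $Z(f)$ commutes with $E(\cdot)$ (noted in the excerpt), so a finite orthogonal sum of cyclic subspaces is $E(\cdot)$-invariant and so is its orthogonal complement, keeping the construction inside the world of spectral subspaces; density of $\{h_n\}$ forces $\overline{\bigoplus_n Z(f_n)}=H$. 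Replacing each $f_n$ by $f_n/\|f_n\|$ (which does not change $Z(f_n)$), we may assume $\|f_n\|=1$.

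Next, set $\psi:=\sum_{n\ge 1}2^{-n}f_n$, a convergent series in $H$. Since the $Z(f_n)$ are mutually orthogonal and each is $E(\cdot)$-invariant, $E(A)\psi=\sum_n 2^{-n}E(A)f_n$ is an orthogonal sum, so $\sigma_\psi(A)=\|E(A)\psi\|^2=\sum_n 4^{-n}\sigma_{f_n}(A)$ for every $A\in\Sigma$; in particular $\sigma_\psi(A)=0$ iff $\sigma_{f_n}(A)=0$ for all $n$. Now suppose $\sigma_\psi(A)=0$ and fix $n$. Then $\|E(A)f_n\|^2=\sigma_{f_n}(A)=0$, so $E(A)f_n=0$, and for every $B\in\Sigma$, $\|E(A)E(B)f_n\|^2=\|E(A\cap B)f_n\|^2=\sigma_{f_n}(A\cap B)\le\sigma_{f_n}(A)=0$. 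Thus $E(A)$ annihilates the dense subset $\{E(B)f_n:B\in\Sigma\}$ of $Z(f_n)$, hence $E(A)|_{Z(f_n)}=0$; as this holds for all $n$ and $H=\bigoplus_n Z(f_n)$, we conclude $E(A)=0$. The converse implication $E(A)=0\Rightarrow\sigma_\psi(A)=0$ is trivial, and for arbitrary $g\in H$, $\sigma_\psi(A)=0\Rightarrow E(A)=0\Rightarrow\sigma_g(A)=\langle E(A)g,g\rangle=0$, giving $\sigma_g\ll\sigma_\psi$.

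The only genuinely substantial step is the cyclic decomposition in the first paragraph; everything afterward is a short orthogonality computation. If one wishes to avoid redoing that decomposition, one may simply cite it from \cite{Hal} (or \cite{Na}) and pass directly to the weighted sum $\psi=\sum_n 2^{-n}f_n$.
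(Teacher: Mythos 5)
Your proof is correct, and it is essentially the argument the paper relies on: the paper gives no proof of its own but cites \cite[p. 11--12]{Na}, where the same scheme --- orthogonal decomposition of $H$ into $E(\cdot)$-invariant cyclic subspaces $Z(f_n)$ followed by the weighted sum $\psi=\sum_n 2^{-n}f_n$ and the orthogonality computation $\sigma_\psi=\sum_n 4^{-n}\sigma_{f_n}$ --- is carried out. Your reconstruction of the cyclic decomposition (using that the projection onto $Z(f)$ commutes with $E(\cdot)$, as noted in the paper from \cite{Hal}) and the multiplicativity $E(A)E(B)=E(A\cap B)$ are used correctly, so no gap remains.
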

The proof is given in \cite[p. 11-12]{Na}. The equivalence class of $\sigma_\psi$ is called
the {\it maximal spectral type} of $E$, and is often denoted by $\sigma_\psi$
instead of $[\sigma_\psi]$. More detailed information is given by the Hahn-Hellinger theorem
\cite{Na}.

%\begin{theo} \label{represent}
%{\rm \cite[p. 95]{Hal}}.
%Let $f \in H$. Then there exists an isometric isomorphism $U$ of $L_2(S,\sigma_f)$ onto
%$Z(f)$ such that $U\mathbf 1=f$, and  $U^{-1}E(A)U \phi =\mathbf 1_A\cdot\phi$ for 
%$A \in \Sigma$ and $\phi \in L_2(S,\sigma_f)$. 
%\end{theo}
%
%\begin{cor}
%(i) If $\sigma_f \sim \sigma_g$, then $Z(f)$ is isometrically isomorphic to $Z(g)$.
%
%(ii) If $\sigma_f=\sigma_g$, then there exists an isometric isomorphism $V$ of
%$Z(f)$ onto $Z(g)$ such that $VE(A)f=E(A)g$ for any $A \in \Sigma$.
%\end{cor}
%
%\begin{cor} \label{ac}
%If $g \in Z(f)$, then $\sigma_g << \sigma_f$.
%\end{cor}
%\begin{proof} (following \cite[p. 94]{Pa}).
%Let $U$ be the isomorphism of Theorem \ref{represent}, and put $\psi=U^{-1}g$. Then
%$$
%\langle \mathbf 1_A\psi,\psi\rangle = \langle U^{-1}E(A)U\psi,\psi\rangle =
%\langle E(A)g,U\psi \rangle = \langle E(A)g,g\rangle,
%$$
%which shows that $\sigma_g(A) = \int_A |\psi|^2 d\sigma_f$. Hence $\sigma_g << \sigma_f$
%with $\frac{d\sigma_g}{d\sigma_f} = |\psi|^2$.
%\end{proof}

\begin{theo} \label{existence}
{\rm \cite[p. 104]{Hal}}.
Let $\nu$ be a finite measure on $(S,\Sigma)$. If $\nu << \sigma_f$ for some $f \in H$,
then there exists $ g \in Z(f)$ such that $\nu=\sigma_g$. When $\nu \sim \sigma_f$ we
have $Z(g)=Z(f)$.
\end{theo}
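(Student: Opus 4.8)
The plan is to reduce everything to the \emph{cyclic model} of $Z(f)$. Recall the single–cyclic–vector case of the spectral theorem: the map $\Phi(\phi):=\big(\int_S\phi\,dE\big)f$ is a unitary isomorphism $\Phi\colon L_2(S,\sigma_f)\to Z(f)$ with $\Phi(\mathbf 1)=f$ and $E(A)\Phi(\phi)=\Phi(\mathbf 1_A\phi)$ for every $A\in\Sigma$ (isometry is $\|\Phi(\phi)\|^2=\int_S|\phi|^2\,d\sigma_f$, surjectivity onto $Z(f)$ follows since the range contains every $E(A)f$ and simple functions are dense in $L_2(\sigma_f)$). From this I would extract two bookkeeping facts: (a) for $\phi\in L_2(\sigma_f)$ one has $\sigma_{\Phi(\phi)}(A)=\langle E(A)\Phi(\phi),\Phi(\phi)\rangle=\int_A|\phi|^2\,d\sigma_f$; and (b) $Z(\Phi(\phi))=\Phi\big(\{\,h\in L_2(\sigma_f):h=0\ \sigma_f\text{-a.e. on }\{\phi=0\}\,\}\big)$. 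Both are part of the standard structure theory of \cite{Hal,Na}.

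Second, apply the Radon--Nikodym theorem. Since $\nu\ll\sigma_f$ and both measures are finite, write $d\nu=\varphi\,d\sigma_f$ with $0\le\varphi\in L_1(\sigma_f)$, and set $\phi:=\sqrt{\varphi}$. Then $\phi\in L_2(\sigma_f)$, because $\int_S|\phi|^2\,d\sigma_f=\int_S\varphi\,d\sigma_f=\nu(S)<\infty$, so $g:=\Phi(\phi)\in Z(f)$ is well defined; by fact (a), $\sigma_g(A)=\int_A\varphi\,d\sigma_f=\nu(A)$ for all $A\in\Sigma$, i.e. $\sigma_g=\nu$. Moreover $g\in Z(f)$ and $Z(f)$ is a closed $E(\cdot)$-invariant subspace, so $Z(g)\subseteq Z(f)$. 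This already proves the first assertion.

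Third, suppose in addition $\nu\sim\sigma_f$. Then $\varphi>0$ $\sigma_f$-a.e. (if $\sigma_f(\{\varphi=0\})>0$ then $\nu(\{\varphi=0\})=0$, contradicting $\sigma_f\ll\nu$), hence $\{\phi=0\}$ is $\sigma_f$-null, and fact (b) gives $Z(g)=Z(\Phi(\phi))=\Phi\big(L_2(\sigma_f)\big)=Z(f)$. Combined with the inclusion from the previous step this yields $Z(g)=Z(f)$, as claimed. The step I expect to be the real work is the nontrivial inclusion ``$\supseteq$'' in fact (b): given $h\in L_2(\sigma_f)$ vanishing on $\{\phi=0\}$, one truncates to arrange $h/\phi$ bounded, observes that $|\phi|^2\,d\sigma_f=\sigma_g$ is a finite measure so $h/\phi\in L_2(\sigma_g)$, approximates $h/\phi$ by simple functions $s$ in $L_2(\sigma_g)$, and uses $\|s\phi-h\|_{L_2(\sigma_f)}=\|s-h/\phi\|_{L_2(\sigma_g)}\to 0$ together with dominated convergence to remove the truncation; each $s\phi$ lies in the span of the $\mathbf 1_A\phi=W$-images of $E(A)g$. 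An alternative but heavier route avoids the model $\Phi$ and argues directly with the (generally unbounded) spectral integrals $\int_S\sqrt{\varphi}\,dE$ and $\int_S(1/\sqrt{\varphi})\,dE$, recovering $f=\big(\int_S(1/\sqrt{\varphi})\,dE\big)g\in Z(g)$ after checking domains; either way, the content is exactly the cyclic-subspace machinery of \cite{Hal,Na}, consistent with the citation in the statement.
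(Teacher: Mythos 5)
Your proof is correct. The paper gives no proof of this statement at all---it is quoted directly from \cite[p.\ 104]{Hal}---and your argument is exactly the standard one behind that citation: the unitary identification of the cyclic subspace $Z(f)$ with $L_2(S,\sigma_f)$ sending $\mathbf 1$ to $f$ and intertwining $E(A)$ with multiplication by $\mathbf 1_A$, the choice $g=\Phi\big(\sqrt{d\nu/d\sigma_f}\big)$ so that $\sigma_g=\nu$, and the density argument identifying $Z(\Phi(\phi))$ with the functions vanishing where $\phi$ does (which, as you note, settles the case $\nu\sim\sigma_f$; the truncation step is not even needed, since $h/\phi\in L_2(|\phi|^2\,d\sigma_f)$ directly and simple functions are dense there).
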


%\begin{theo} \label{maximal-type}
%{\rm \cite[p. 104]{Hal}}.
%Let $\mu$ be a finite measure on $(S,\Sigma)$. Then $H_\mu:=\{g \in H: \sigma_g <<\mu\}$
%is a closed subspace of $H$.
%\end{theo}
%When $H_{\sigma_f}=H$ for some $f \in H$, $\sigma_f$ is a {\it maximal spectral measure} 
%of the spectral measure $E$;  since any two maximal spectral measures are equivalent by the
%definition,  the equivalence class of any maximal spectral measure $\sigma_f$ is
%{\it the maximal spectral type} (of $E$) (and will be somtimes denoted by $\sigma_f$). 
%Existence of the maximal spectral type in separable spaces is given in Theorem 
%\ref{maximal-spectral}.
%
%A cyclic subspace $Z(f)$ is called {\it maximal} if it is not properly contained in any larger
%cyclic subspace. It follows from Corollary \ref{ac} and Theorem \ref{existence} that if 
%$\sigma_f$ is a maximal spectral type, then $Z(f)$ is  a maximal cyclic subspace.
%
%\begin{theo} \label{maximal-cyclic}
%Let $H$ be separable. Then:
%
%(i) For every $g \in H$ there exists a maximal cyclic subspace which contains $g$.
%
%(ii) If $Z(f)$ is a maximal cyclic subspace, then $\sigma_f$ is a maximal spectral type.
%\end{theo} 
%
%
%The proof of the next theorem is like that of \cite[A.10, p.95]{Pa}.
%\begin{theo}
%Let $H$ be separable. Then there exists a sequence $\{f_j\} \subset H$ with \newline
%${\sigma_{f_1} >>  \sigma_{f_2} >> \cdots}$ such that $H =\sum_j \oplus Z(f_j)$.
%\end{theo}

The following proposition and its proof are inspired by the one-dimensional result for unitary
operators in \cite[p. 290]{IT}. 
We use the Harte {\it joint spectrum} of $d$ commuting operators in a complex Banach space $X$,
with respect to the Banach algebra $B(X)$ of all bounded linear operators on $X$ \cite{Ha1}.
For completeness we repeat the definition.
\smallskip

{\bf Definition.} The {\it joint spectrum} of the operators $T_1,\dots,T_d \in B(X)$,
denoted by $\sigma(T_1,\dots,T_d)$ or $\sigma(\{T_j\})$, is the set of 
$(\lambda_1,\dots,\lambda_d) \in \mathbb C^d$ such that one of the equations
\begin{equation} \label{left}
\sum_{j=1}^d A_j(\lambda_j I - T_j)=I \qquad A_1,\dots,A_d \in B(X)
\end{equation}
\begin{equation} \label{right}
\sum_{j=1}^d (\lambda_j I - T_j)B_j=I \qquad B_1,\dots,B_d \in B(X)
\end{equation}
has no solution. The joint spectrum is closed \cite[p. 872]{Ha1}, and the following 
properties follow directly from the definition:
\begin{equation} \label{product}
\sigma(T_1,\dots,T_d) \subset \sigma(T_1) \times \dots \times \sigma(T_d).
\end{equation}
%{\it If $\inf_{\|x\|=1} \sum_{j=1}^d \|(\lambda_j I-T_j)x\| =0$, then 
\begin{equation} \label{ap}
\inf_{\|x\|=1} \sum_{j=1}^d \|(\lambda_j I-T_j)x\| =0\quad \Longrightarrow \quad
(\lambda_1,\dots,\lambda_d) \in \sigma(T_1,\dots,T_d),
\end{equation}
Since equation (\ref{left}) cannot have a solution. The set of points satifying (\ref{ap}) 
is the {\it approximate point spectrum} $\sigma_\pi(T_1,\dots,T_d)$.
\smallskip

{\it If $X= X_1\oplus X_2$  with $X_1$ and $X_2$ each invariant under all the $T_j$,
then the restrictions $T_j^{(k)}$ of $T_j$ to $X_k$ satisfy}
\begin{equation} \label{direct}
\sigma(T_1,\dots,T_d) \subset 
\sigma(T_1^{(1)},\dots,T_d^{(1)}) \cup \sigma(T_1^{(2)},\dots,T_d^{(2)}).
\end{equation}

%We use the {\it joint spectrum} of $d$ commuting operators in a complex Hilbert space $H$, 
%denoted by $\sigma(T_1,\dots,T_d)$, as defined in \cite[Chapter 5]{LKMV}; 
%it is a special case of the Harte joint spectrum \cite{Ha} with 
%respect to the Banach algebra $B(H)$ of all bounded linear operators on $H$.
For additional information when the operators act in a Hilbert space, see \cite[Chapter 5]{LKMV}. 
Note that by \cite[Proposition 2.10]{Va}, \cite[p. 30]{BS}, the joint spectrum of operators 
on $H$ is a subset of the Taylor spectrum \cite{Ta} $\sigma_{\mathbf T}(T_1, \dots,T_d)$, 
while for normal operators on $H$, these spectra are equal, by \cite[pp. 30-31]{MPR}.
%For the definition of the Taylor (joint) spectrum of $d$ commuting operators 
%$\sigma_{\mathbf T}(T_1, \dots,T_d)$ and its properties, see \cite{Ta}, \cite{Cu}, \cite{CY}.

\begin{prop} \label{torus}
Let $U$ and $V$ be commuting isometries on a complex Hilbert space $H$.
If for every $n>2$ there exists a vector $v_n \ne 0$ such that the vectors
$\{U^kV^j v_n: 0\le k \le n,\ 0\le j \le n\}$ are orthogonal, then 
$\mathbb T\times \mathbb T \subset \sigma(U,V).$ 
%$\mathbb T\times \mathbb T \subset \sigma_{\mathbf T}(U,V).$ 

If $U$ and $V$ are invertible (unitary operators), then
$\mathbb T\times \mathbb T =\sigma(U,V).$ 
%$\mathbb T\times \mathbb T =\sigma_{\mathbf T}(U,V).$ 
\end{prop}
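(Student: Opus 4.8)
The plan is to prove the first assertion directly from the approximate point spectrum criterion (\ref{ap}), and then to derive the second assertion by combining the first with the product inclusion (\ref{product}) and the fact that unitary operators have spectrum contained in $\mathbb T$.

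For the first assertion, fix $(\lambda_1,\lambda_2)\in\mathbb T\times\mathbb T$; I want to show it lies in $\sigma_\pi(U,V)\subset\sigma(U,V)$. The idea is to use the orthogonal packets $\{U^kV^jv_n: 0\le k\le n,\ 0\le j\le n\}$ to build a Fejér-type averaging vector that is nearly fixed by the rotated pair $\lambda_1^{-1}U$, $\lambda_2^{-1}V$. Concretely, I would set
\[
w_n=\frac{1}{n+1}\sum_{k=0}^{n}\sum_{j=0}^{n}\lambda_1^{-k}\lambda_2^{-j}\,U^kV^jv_n,
\]
or rather the properly normalized version dividing by $(n+1)$ in each variable (so that $w_n$ is an average of $(n+1)^2$ orthogonal vectors, each of norm $\|v_n\|$ since $U,V$ are isometries). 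Because the summands are pairwise orthogonal and all have norm $\|v_n\|$, we get $\|w_n\|^2=\|v_n\|^2$ after dividing by $(n+1)^2$; more precisely, taking $w_n=\frac{1}{(n+1)^2}\sum_{k,j}\lambda_1^{-k}\lambda_2^{-j}U^kV^jv_n$ gives $\|w_n\|=\|v_n\|/(n+1)$, so I should instead normalize $w_n$ to be a unit vector, i.e. $w_n=\frac{1}{(n+1)\|v_n\|}\sum_{k,j}\lambda_1^{-k}\lambda_2^{-j}U^kV^jv_n$. Then I compute $(\lambda_1 I-U)w_n$: applying $U$ telescopes the sum in the $k$-index, and orthogonality means the leftover terms (the $k=0$ and $k=n$ layers, each a sum of $n+1$ orthogonal vectors of norm $\|v_n\|$) contribute squared norm at most $\frac{2(n+1)\|v_n\|^2}{(n+1)^2\|v_n\|^2}=\frac{2}{n+1}$ after accounting for the normalization and the extra $|\lambda_1|=1$ factor. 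Hence $\|(\lambda_1 I-U)w_n\|\to 0$ and similarly $\|(\lambda_2 I-V)w_n\|\to 0$, so $\inf_{\|w\|=1}(\|(\lambda_1 I-U)w\|+\|(\lambda_2 I-V)w\|)=0$, and (\ref{ap}) gives $(\lambda_1,\lambda_2)\in\sigma(U,V)$. Since $\sigma(U,V)$ is closed and $\mathbb T\times\mathbb T$ is already closed, we conclude $\mathbb T\times\mathbb T\subset\sigma(U,V)$.

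For the second assertion, suppose in addition $U$ and $V$ are unitary. Then each is a normal (indeed unitary) operator, so $\sigma(U)\subset\mathbb T$ and $\sigma(V)\subset\mathbb T$; by the product inclusion (\ref{product}) we get $\sigma(U,V)\subset\sigma(U)\times\sigma(V)\subset\mathbb T\times\mathbb T$. Combined with the reverse inclusion just proved, this yields $\sigma(U,V)=\mathbb T\times\mathbb T$.

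The main obstacle I anticipate is bookkeeping in the telescoping estimate: one must be careful that "applying $U$ and re-indexing" genuinely cancels the bulk of the double sum, leaving only $O(n)$ boundary terms out of the $O(n^2)$ total, and that the normalization by $(n+1)$ (not $(n+1)^2$) is the correct one to make $w_n$ a unit vector while still sending the commutator-type errors to zero. This is exactly the two-dimensional analogue of the classical one-dimensional Fejér-kernel argument in \cite[p. 290]{IT}, and the orthogonality hypothesis on the packets $\{U^kV^jv_n\}$ is precisely what makes the norm computations exact rather than merely asymptotic. No deeper input is needed beyond (\ref{ap}), (\ref{product}), and the normality of unitary operators.
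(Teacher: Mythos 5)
Your proposal is correct and follows essentially the same route as the paper: a two-dimensional Fej\'er-type vector built from the orthogonal packet exhibits $(\lambda_1,\lambda_2)$ as an approximate joint eigenvalue, so (\ref{ap}) gives $\mathbb T\times\mathbb T\subset\sigma(U,V)$, and the unitary case follows from (\ref{product}). The only caveat is your index range: summing $k,j$ up to $n$ makes the telescoped boundary layer $\{U^{n+1}V^jv_n: 0\le j\le n\}$ fall outside the hypothesized orthogonal family, so either shift to $0\le k,j\le n-1$ (as the paper does, keeping every term inside the family and the norm computations exact) or note that isometries preserve inner products, so each boundary layer is still internally orthogonal and the triangle inequality gives $\|(\lambda_1 I-U)w_n\|\le 2/\sqrt{n+1}\to 0$, which suffices.
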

\begin{proof}
We fix $n>2$ and denote $v=v_n$. Let $|\lambda|=|\nu| =1$, and define 
$$
x_n := \sum_{k=0}^{n-1}\sum_{j=0}^{n-1} \lambda^{n-k}U^k \nu^{n-j}V^j v.
$$
By the assumed orthogonality, $\|x_n\|^2=n^2\|v\|^2 \ne 0$. We compute
$$(\lambda I -U)x_n =
\sum_{j=0}^{n-1}\nu^{n-j}V^j [(\lambda I- U)\sum_{k=0}^{n-1} \lambda^{n-k}U^k v]=
$$
$$
\sum_{j=0}^{n-1}\nu^{n-j}V^j [\sum_{k=0}^{n-1} (\lambda^{n+1-k}U^k v - \lambda^{n-k}U^{k+1} v)]=
\sum_{j=0}^{n-1}\nu^{n-j}V^j (\lambda^{n+1}v - \lambda U^nv).
$$
By the orthogonality assumption we obtain $\|(\lambda I-U)x_n\|^2 =2n \|v\|^2 =\frac2n \|x_n\|^2$,
and similarly $\|(\nu I-V)x_n\|^2 =\frac2n \|x_n\|^2$, so
\begin{equation} \label{approximate}
\|(\lambda I-U)x_n\| +\|(\nu I-V)x_n\| = 2\frac{\sqrt 2}{\sqrt n} \|x_n\|. 
\end{equation}
%By the definition of the approximate point spectrum of $n$-tuple of operators \cite[p. 35]{Cu},
Thus, (\ref{approximate}) shows that $ (\lambda, \nu)$ is in the "approximate point spectrum"
$\sigma_\pi(U,V)$. 

Hence $\mathbb T\times \mathbb T \subset \sigma_\pi(U,V) \subset \sigma(U,V)$, 
the last inclusion by (\ref{ap}).
% \cite[Lemma 5.1.1]{LKMV}. 
%Hence $\mathbb T\times \mathbb T \subset \sigma_\pi(U,V) \subset \sigma_{\mathbf T}(U,V)$, 
%with the last inclusion by \cite[p. 37]{Cu}.
\smallskip

When $U$ and $V$ are unitary, 
%a general property of the spectrum  \cite[p. 74]{LKMV} implies
%\cite[p. 179, Corollary]{Ta}, 
$\sigma(U,V) \subset \sigma(U) \times \sigma(V) \subset \mathbb T \times \mathbb T$ 
by (\ref{product}), which completes the proof.
%$\sigma_{\mathbf T}(U,V) \subset \sigma(U) \times \sigma(V) \subset \mathbb T \times \mathbb T$, 
\end{proof}

\begin{cor} \label{alex2}
Let $\theta$ and $\tau$ be commuting measure preserving transformations of a 
non-atomic probability space $(\Omega,\P)$ which generate a free $\mathbb N_0^2$ action.
%$\mathbb Z^2$ action they generate is non-periodic (e.g, they are ergodic). 
Then for $1\le p <\infty$,  the joint spectrum of the isometric operators $U$ and $V$ 
induced on the complex $L_p$  by $\theta$ and $\tau$ contains $\mathbb T \times \mathbb T$.
If $\theta$ and $\tau$ are invertible, then $\sigma(U,V) =\mathbb T \times \mathbb T$.
%$\sigma_{\mathbf T}(U,V) =\mathbb T \times \mathbb T$.
\end{cor}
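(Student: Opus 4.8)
The plan is to produce, for each $n$, a suitable non-zero vector $v_n$ and then run the argument already used in the proof of Proposition \ref{torus}. The one genuinely new ingredient is a Rokhlin-type lemma: \emph{for each $n$ there is a measurable set $A_n$ with $\P(A_n)>0$ such that the $(n+1)^2$ preimages $\theta^{-k}\tau^{-j}A_n$, $0\le k\le n$, $0\le j\le n$, are pairwise disjoint.} For free $\mathbb Z^2$-actions on non-atomic spaces this is the multidimensional Rokhlin lemma (\cite{Co}, \cite{KW}); note we do not even need the usual ``fills all but $\epsilon$'' conclusion, only positivity of $\P(A_n)$. For a free $\mathbb N_0^2$-action, where $\theta$ and $\tau$ need not be invertible, the same conclusion holds by the same construction (towers built with preimages); carrying this through carefully is the step that requires attention and is the main obstacle — when $\theta,\tau$ are invertible it is literally the classical statement.

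Granting the lemma, set $v_n:=\mathbf 1_{A_n}$. Since $U$ and $V$ act by composition with $\theta$ and $\tau$, one has $U^kV^jv_n=\mathbf 1_{\theta^{-k}\tau^{-j}A_n}$, so the vectors $\{U^kV^jv_n:0\le k\le n,\ 0\le j\le n\}$ are non-zero and have pairwise disjoint supports, each of measure $\P(A_n)$. In $H=L_2$ they are in particular orthogonal, so Proposition \ref{torus} immediately gives $\mathbb T\times\mathbb T\subset\sigma(U,V)$, with equality when $U,V$ are unitary; this disposes of $p=2$.

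For general $1\le p<\infty$ (which also re-covers $p=2$) I would repeat the proof of Proposition \ref{torus} inside $L_p$. Fix $(\lambda,\nu)\in\mathbb T\times\mathbb T$, write $v=v_n$, and put
$$
x_n:=\sum_{k=0}^{n-1}\sum_{j=0}^{n-1}\lambda^{n-k}\nu^{n-j}\,U^kV^jv .
$$
Because the summands have pairwise disjoint supports of equal measure $\P(A_n)$ and all coefficients are unimodular, $\|x_n\|_p^p=n^2\P(A_n)$, so $x_n\ne0$. The same telescoping computation as in the proof of Proposition \ref{torus} gives
$$
(\lambda I-U)x_n=\sum_{j=0}^{n-1}\nu^{n-j}V^j\big(\lambda^{n+1}v-\lambda U^nv\big),
$$
a linear combination of the $2n$ functions $V^jv=\mathbf 1_{\tau^{-j}A_n}$ and $V^jU^nv=\mathbf 1_{\theta^{-n}\tau^{-j}A_n}$, $0\le j\le n-1$; these all belong to the disjoint family of the lemma and the coefficients are unimodular, so $\|(\lambda I-U)x_n\|_p^p=2n\,\P(A_n)$, and likewise $\|(\nu I-V)x_n\|_p^p=2n\,\P(A_n)$. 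Hence
$$
\frac{\|(\lambda I-U)x_n\|_p+\|(\nu I-V)x_n\|_p}{\|x_n\|_p}=2\Big(\frac2n\Big)^{1/p}\longrightarrow 0 ,
$$
so $(\lambda,\nu)\in\sigma_\pi(U,V)\subset\sigma(U,V)$ by (\ref{ap}). As $(\lambda,\nu)$ was arbitrary, $\mathbb T\times\mathbb T\subset\sigma(U,V)$.

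Finally, if $\theta$ and $\tau$ are invertible, then $U$, $V$ and their inverses (induced by $\theta^{-1},\tau^{-1}$) are all isometries of $L_p$, so $\sigma(U)\subset\mathbb T$ and $\sigma(V)\subset\mathbb T$; by (\ref{product}), $\sigma(U,V)\subset\sigma(U)\times\sigma(V)\subset\mathbb T\times\mathbb T$, which together with the inclusion just proved yields $\sigma(U,V)=\mathbb T\times\mathbb T$. In short: once the Rokhlin lemma for free $\mathbb N_0^2$-actions is in hand, the rest is the disjoint-supports bookkeeping that makes all the $L_p$-norms in the argument of Proposition \ref{torus} trivial to evaluate, and I expect that lemma (in the possibly non-invertible setting) to be the only point needing real work.
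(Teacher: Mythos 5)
Your proposal is correct and follows essentially the same route as the paper: a Rokhlin-type disjointness lemma for the free $\mathbb N_0^2$ action producing a set whose preimages $\theta^{-k}\tau^{-j}A_n$, $0\le k,j\le n$, are disjoint, followed by the disjoint-supports version of the telescoping computation of Proposition \ref{torus} with $p$-th powers of norms, and the inclusion $\sigma(U,V)\subset\sigma(U)\times\sigma(V)$ in the invertible case. The single step you flag as the main obstacle, the Rokhlin lemma for possibly non-invertible commuting transformations, is exactly what the paper imports as a citation from Avila and Candela \cite{AC}, so no new argument is required there.
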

\begin{proof}
The Rokhlin-Kakutani lemma for free $\mathbb N_0^d$ actions, proved by Avila and Candela 
\cite{AC} (for non-periodic $\mathbb Z^d$ actions see Conze \cite[Lemme 3.1]{Co} or Katznelson 
and Weiss \cite[Theorem 1]{KW}), yields that for every $n$ there is a measurable set $E_n$
such that the sets $\{\theta^{-k}\tau^{-j}E_n: 0\le k \le n,\ 0\le j \le n\}$ are disjoint.
For $p=2$ the vector $v_n=1_{E_n}$ satisfies the assumptions of Proposition \ref{torus}.

When $p \ne 2$, note that for $v_n=1_{E_n}$ the functions 
$\{U^kV^j v_n: 0\le k \le n,\ 0\le j \le n\}$ have disjoint supports. Replacing 
orthogonality by disjointness of supports and taking norms to power $p$ instead of squares 
in the proof of  Proposition \ref{torus}, that proof yields that also in $L_p$ we have 
$\mathbb T\times \mathbb T \subset \sigma(U,V)$, with equality when $\theta$ and $\tau$ 
are invertible.
\end{proof}

{\bf Remarks.} 1. The one-dimensional result for invertible transformations and $p=2$ was obtained
in \cite[Corollary 1]{IT}. It could have been proved there (even for $p\ne 2$) by using the 
original Rokhlin's lemma and Remark 5 in \cite[p. 290]{IT}. Another proof  (for $p=2$)
was given in \cite[Corollaire 1]{Fo}.

2. If $\theta$ and $\tau$ are invertible and  generate an aperiodic $\mathbb Z^2$-action,
then the $\mathbb N_0^2$-action is free.
\medskip

We now return to the problem of joint coboundaries which are not double coboundaries.

%\begin{prop} \label{foias0}
%Let $\theta$ and $\tau$ be commuting invertible ergodic measure preserving transformations 
%of a standard probability space $(\Omega, \mathbf B, \P)$, with coresponding unitary operators
%$U$ and $V$, let $U_0$ and $V_0$ be their restrictions to 
%$H_0:= \{f \in L_2(\Omega,\P):  \int f \, d\P =0\}$, and let $E_0(\cdot)$ be the spectral 
%measure of the pair $(U_0,V_0)$ given by the general Stone spectral theorem. 
%Then $(1,1)$ is in the support of $E_0$.
%\end{prop}
%\begin{proof} Since $L_2(\P)$ is separable (standard probability space), so is $H_0$.
%By the mean ergodic theorem, $L_2(\P)=\mathbb C \cdot{\bf 1}\oplus H_0$. Since constant
%functions are invariant, $E(\cdot)$, the spectral measure of $(U,V)$, satisfies 
%$$
%E(A)f = c \delta_{(1,1)}(A){\bf 1}+ E_0(A)(f -c), \quad \text{ where } \ c=\int f\, d\P.
%$$
%On $H_0$ neither $U_0$ nor $V_0$ have non-zero fixed points, so $E_{U_0}(\{1\}) =E_{V_0}(\{1\})=0$.
%Hence $E_0 (\{1\}\times \mathbb T)\cup(\mathbb T\times \{1\}) =0$. By Foia\c s 
%\cite[Proposition 4]{Fo}, $(1,1)$, the identity of $\widehat{\mathbb Z^2}=\mathbb T^2$, 
%is in the support of the spectral measure  $E(\cdot)$.  Let $P$ be the orthogonal projection 
%on $H_0$ and denote by $I_1$ the identity on the constants. Let $A$ be an open set in 
%$\mathbb T^2$ containing $(1,1)$. Then $0 \ne E(A) =\delta_{(1,1)}(A)I_1(I-P)  + E_0(A)P.$
%If $E_0(A)P=0$, we obtain that $E(A)f = \int f\, d\P$. 
%%but these cannot be induced by point transformations. 
%Hence $E_0(A)P \ne 0$, i.e. $E_0(A) \ne 0$ whenever $A$ contains $(1,1)$.
%\end{proof}

\begin{theo} \label{mpt}
Let $\theta$ and $\tau$ be commuting invertible measure preserving transformations of a 
standard probability space $(\Omega, \mathbf B, \P)$  which generate an aperiodic $\mathbb Z^2$
action, and let $U$ and $V$ be their coresponding unitary operators on $L_2(\Omega,\mathbf B,\P)$.
Then there exists a function $g \in (I-U)L_2\cap (I-V)L_2$ which is not in $(I-U)(I-V)L_2$.
\end{theo}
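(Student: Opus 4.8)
The plan is to pass to the spectral calculus of the joint spectral measure, reduce the statement to constructing a single finite measure on $\mathbb T^2$ with prescribed integrability, and realize that measure as the spectral measure of an honest $L_2$ function via the maximal spectral type. Write $H=L_2(\Omega,\B,\P)$, let $E(\cdot)$ be the joint spectral measure on $\mathbb T^2$ of the $\mathbb Z^2$-representation $U^nV^m$, and put $\sigma_f(\cdot)=\langle E(\cdot)f,f\rangle$. By Proposition~\ref{spectral-cob}(i) (applied to $U$ and to $V$), $f\in(I-U)L_2$ if and only if $\int_{\mathbb T^2}|z_1-1|^{-2}\,d\sigma_f<\infty$ and $f\in(I-V)L_2$ if and only if $\int_{\mathbb T^2}|z_2-1|^{-2}\,d\sigma_f<\infty$, while by Proposition~\ref{spectral-cob}(ii), $f\in(I-U)(I-V)L_2$ if and only if $\int_{\mathbb T^2}\frac{d\sigma_f}{|z_1-1|^2|z_2-1|^2}<\infty$. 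Hence it suffices to produce $0\ne g\in H$ whose spectral measure $\nu:=\sigma_g$ satisfies
$$
\int_{\mathbb T^2}\frac{d\nu}{|z_1-1|^2}<\infty,\qquad
\int_{\mathbb T^2}\frac{d\nu}{|z_2-1|^2}<\infty,\qquad
\int_{\mathbb T^2}\frac{d\nu}{|z_1-1|^2|z_2-1|^2}=\infty .
$$
To get such a $g$ I would fix a vector $\psi$ of maximal spectral type (Theorem~\ref{maximal-spectral}; $H$ is separable since the probability space is standard), so that by Theorem~\ref{existence} every finite measure $\nu\ll\sigma_\psi$ is of the form $\sigma_g$ for some $g\in Z(\psi)\subset H$.

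The structural input that makes the construction possible is $\operatorname{supp}\sigma_\psi=\mathbb T^2$. Indeed, $\operatorname{supp}\sigma_\psi=\operatorname{supp}E$ by Theorem~\ref{maximal-spectral}, $\operatorname{supp}E=\sigma(U,V)$ is the standard identification of the support of the joint spectral measure of commuting unitaries with their Harte joint spectrum, and $\sigma(U,V)=\mathbb T\times\mathbb T$ by Corollary~\ref{alex2}, since an aperiodic $\mathbb Z^2$-action has a free $\mathbb N_0^2$-action. Consequently $\sigma_\psi$ assigns positive mass to every nonempty open subset of $\mathbb T^2$.

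Now I would concentrate mass near the corner $(1,1)$ but away from the coordinate circles. For $j\ge1$ set
$$
A_j:=\bigl\{(z_1,z_2)\in\mathbb T^2:\ 2^{-j}<|z_1-1|\le 2^{-j+1},\ \ 2^{-j}<|z_2-1|\le 2^{-j+1}\bigr\}.
$$
These sets are pairwise disjoint and have nonempty interior, so $b_j:=\sigma_\psi(A_j)>0$, and
$$
\nu:=\sum_{j\ge1}\frac{2^{-3j}}{b_j}\,\sigma_\psi\big|_{A_j}
$$
is a nonzero finite measure (total mass $\sum_j 2^{-3j}$) with $\nu\ll\sigma_\psi$. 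Let $g\in Z(\psi)$, $g\ne0$, be the vector with $\sigma_g=\nu$ furnished by Theorem~\ref{existence}. On $A_j$ one has $2^{2j-2}\le|z_i-1|^{-2}<2^{2j}$ for $i=1,2$, whence
$$
\int_{\mathbb T^2}\frac{d\nu}{|z_1-1|^2}
=\sum_{j\ge1}\frac{2^{-3j}}{b_j}\int_{A_j}\frac{d\sigma_\psi}{|z_1-1|^2}
\le\sum_{j\ge1}2^{-3j}\cdot 2^{2j}=\sum_{j\ge1}2^{-j}<\infty,
$$
and symmetrically $\int_{\mathbb T^2}|z_2-1|^{-2}\,d\nu<\infty$, so $g\in(I-U)L_2\cap(I-V)L_2$; while
$$
\int_{\mathbb T^2}\frac{d\nu}{|z_1-1|^2|z_2-1|^2}
=\sum_{j\ge1}\frac{2^{-3j}}{b_j}\int_{A_j}\frac{d\sigma_\psi}{|z_1-1|^2|z_2-1|^2}
\ge\sum_{j\ge1}2^{-3j}\cdot 2^{4j-4}=2^{-4}\sum_{j\ge1}2^{j}=\infty,
$$
so $g\notin(I-U)(I-V)L_2$, which is the desired conclusion.

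The only step that is not routine is the full-support claim $\operatorname{supp}\sigma_\psi=\mathbb T^2$, and in fact all that is needed is $\sigma_\psi(A_j)>0$ for every $j$. I deduced this from Corollary~\ref{alex2} together with the identification of $\operatorname{supp}E$ with the joint spectrum; if one prefers to avoid quoting the latter, the same positivity follows directly from the approximate-eigenvector estimate (\ref{approximate}) in the proof of Proposition~\ref{torus}: that construction provides, for each $(\lambda,\mu)\in\mathbb T^2$, unit vectors with $\|(\lambda I-U)x_n\|\to0$ and $\|(\mu I-V)x_n\|\to0$, which forces $E$ — hence $\sigma_\psi$ — to charge every bidisc neighborhood of $(\lambda,\mu)$, in particular every $A_j$. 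All the rest is the dyadic-shell bookkeeping displayed above.
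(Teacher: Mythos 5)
Your proof is correct, and while it lives in the same spectral framework as the paper's (the joint spectral measure $E$ on $\mathbb T^2$, Proposition \ref{spectral-cob}, a vector $\psi$ of maximal spectral type via Theorem \ref{maximal-spectral}, realization of measures $\nu\ll\sigma_\psi$ as $\sigma_g$ via Theorem \ref{existence}, and fullness of the support through Corollary \ref{alex2}), the heart of the argument is genuinely different. The paper proceeds non-constructively: it attaches to every $\mu\ll\sigma_\psi$ the measure $\nu_\mu$ with density $\phi(z_1,z_2)=|z_1-1|^2|z_2-1|^2/(|z_1-1|^2+|z_2-1|^2)$, assumes all the resulting double-coboundary integrals are finite, and uses Lemma \ref{bounded} to force $(|z_1-1|^2+|z_2-1|^2)^{-1}$ into $L_\infty(\sigma_\psi)$, contradicting $\operatorname{supp}\sigma_\psi=\mathbb T^2$ (which it gets from Hastings' theorem plus the comparison of joint spectra in \cite{MPR}, then Corollary \ref{alex2}). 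You instead build the measure explicitly, piling mass $2^{-3j}$ on dyadic shells $A_j$ approaching $(1,1)$ but bounded away from the two coordinate circles; the shell estimates $2^{2j-2}\le|z_i-1|^{-2}<2^{2j}$ immediately give convergence of the two single integrals and divergence of the double one, so no exhaustion lemma or contradiction is needed. Two further points in your favour: you only need $\sigma_\psi(A_j)>0$ rather than full support, and your fallback derivation of this positivity directly from the approximate-eigenvector estimate (\ref{approximate}) (if $E$ annihilated a bidisc around $(\lambda,\mu)$, then $\|(\lambda I-U)x\|^2+\|(\mu I-V)x\|^2\ge\epsilon^2$ for all unit $x$) bypasses the citations of Hastings and McIntosh--Pryde--Ricker altogether, which is arguably cleaner. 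The only item of the paper's proof you do not reproduce is the closing remark that one may take $g$ real-valued (by passing to $\Re g$ or $\Im g$); since the statement as given does not require a real function, this is a refinement rather than a gap, but if you want the real $L_2$ version you should append that short argument.
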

\begin{proof} Let $E(\cdot)$ be the spectral measure of the pair $(U,V)$ given by the general 
Stone spectral theorem. Since  (the complex) $L_2(\P)$ is separable (standard probability space), 
by Theorem \ref{maximal-spectral}, there exists $\psi \in H$ such that $\sigma_\psi$ is the maximal 
spectral type  of $E$.

For $(1,1) \ne (z_1,z_2) \in \mathbb T^2$ define
$$
\phi(z_1,z_2) :=  \frac{|z_1-1|^2  |z_2-1|^2}{|z_1-1|^2 + |z_2-1|^2},
$$ 
and put $\phi(1,1)=0$. Then $\phi(z_1,z_2) \le |z_2-1|^2 \le 4$, so $\phi $ is bounded, 
and vanishes on
$A:= \{(z_1,z_2): \phi(z_1,z_2)=0\} = (\{1\}\times \mathbb T)\cup(\mathbb T\times \{1\})$.
Since $U\ne I$ and $V \ne I$, $E(\{1\}\times \mathbb T)=E_U(\{1\})\ne I$ and 
$E(\mathbb T \times \{1\})=E_V(\{1\})\ne I$; hence $\sigma_\psi(A^c):=\sigma_\psi(\mathbb T^2-A)>0$.

For any finite measure $\mu << \sigma_\psi$  
%with $\mu(A^c) >0$
 we define $\nu_\mu$ by $d\nu_\mu/d\mu :=\phi$. Then $\nu_\mu$ is a finite measure with
$\nu_\mu(A)=0$, $\nu_\mu \ne 0$ when $\mu(A^c) > 0$, and $\nu_\mu$ satisfies
%$\nu_\mu\big((\{1\}\times \mathbb T)\cup(\mathbb T\times \{1\})\big)=0$, and 
$$
\int_{\mathbb T^2} \frac{|z_1-1|^2 +|z_2-1|^2}{|z_1-1|^2|z_2-1|^2} d\nu_\mu =
\int_{A^c} \frac{|z_1-1|^2 +|z_2-1|^2}{|z_1-1|^2|z_2-1|^2} \phi(z_1,z_2)d\mu
= \mu(A^c) < \infty.
$$
Hence $\nu_\mu$ satisfies (\ref{spectral-joint}). We now show that for some $\mu$ we have
$\displaystyle{ \int_{\mathbb T^2} \frac{d\nu_\mu(z_1,z_2)}{|z_1-1|^2|z_2-1|^2} = \infty.}$
Assume not; then for every $\mu << \sigma_\psi$ with $\mu(A^c) >0$ we have
$$
\int_{A^c} \frac{d\mu}{|z_1-1|^2+|z_2-1|^2} =
\int_{A^c} \frac{\phi(z_1,z_2)d\mu}{|z_1-1|^2|z_2-1|^2} =
 \int_{\mathbb T^2} \frac{d\nu_\mu(z_1,z_2)}{|z_1-1|^2|z_2-1|^2}  < \infty.
$$
By a well-known lemma (Lemma \ref{bounded} below) we conclude that 
$(|z_1-1|^2 + |z_2-1|^2)^{-1} \in L_\infty(A^c,\sigma_\psi)$. 
%By a result of Foia\c s \cite[Proposition 4]{Fo}, $(1,1)$, the identity of 
%$\widehat{\mathbb Z^2}$, is in the support of the spectral measure  $E(\cdot)$, 
%which is exactly the support of the maximal spectral type $\sigma$. 
%Since $\theta$ and $\tau$ are aperiodic, the spectra of $U$ and of $V$ are $\mathbb T$.

By a theorem of Hastings \cite[Theorem 3]{Has}, the support of $E(\cdot)$ (which is 
the support of $\sigma_\psi$), equals the joint spectrum $\sigma''(U,V)$ with respect to the 
double commutant of $(U,V)$; but this equals the Harte spectrum $\sigma(U,V)$ by 
\cite[Proposition 4 and Example (A)]{MPR}. See also \cite[Theorem 2.2]{Pa}.
By our aperiodicity assumption on the transformations, Corollary \ref{alex2} yields
$\sigma(U,V) = \mathbb T^2$, so the support of $\sigma_\psi$ is all of $\mathbb T^2$.
Hence $(|z_1-1|^2 + |z_2-1|^2)^{-1}$, which is unbounded in any neighborhood of $(1,1)$, 
cannot be in $L_\infty(A^c,\sigma_\psi )$. This contradiction shows that for some 
$\mu_0 << \sigma_\psi$ (with $\mu_0(A^c)>0$), we have 
$\displaystyle{ \int_{\mathbb T^2} \frac{d\nu_{\mu_0}(z_1,z_2)}{|z_1-1|^2|z_2-1|^2} = \infty.}$
By the construction, $0\ne \nu_{\mu_0} << \sigma_\psi$, and by  Theorem \ref{existence}
there exists $g \in Z(f) \subset L_2$ such that $\sigma_g= \nu_{\mu_0}$. By what we saw, 
$\sigma_g$ satisfies (\ref{spectral-joint}), and the integral with respect to $\sigma_g$ in 
Proposition \ref{spectral-cob}(ii) diverges. Hence $g$ is a joint coboundary which is not a 
double coboundary. It can be shown (using the spectral theorem, \cite[Theorem 1, p. 95]{Hal}
and  $L_2$-density of trigonometric polynomials on $\mathbb T^2$)
that $Z(\psi)$ is the closed linear manifold generated by the $\mathbb Z^2$-orbit 
$\{U^nV^m \psi: m,n \in \mathbb Z\}$. 

The above proof yields $g$ in the {\it complex} $L_2$. Let $L_2^{(\mathbb R)}$ be the real $L_2$,
which is invariant under $U$ and $V$. If $g=(I-U)h$, then $\Re g=(I-U)\Re h$, etc. Hence
$\Re g$ and $\Im g$ are both joint coboundaries. If there are $h_1,h_2 \in L_2^{(\mathbb R)}$
such that $(I-U)(I-V)h_1 =\Re g$ and $(I-U)(I-V)h_2= \Im g$, then $(I-U)(I-V)(h_1+ih_2) =g$,
contradicting the choice of $g$. Hence $\Re g$ or $\Im g$ is a joint coboundary in 
$L_2^{(\mathbb R)}$ which is not a double coboundary.
\end{proof}

%{\bf Remarks.} 1. The assumption on the unitary operators in \cite{Fo} is that they are 
%multiplicative.  In general Proposition 4 there is false -- any closed subset of $\mathbb T$ 
%can be the spectrum of a unitary operator.
%
%2. The proof of Theorem \ref{mpt} is valid also when $\theta$ and $\tau$ preserve a 
%$\sigma$-finite measure $m$ (instead of $\P$), provided $L_2(\Omega,\mathbf B,m)$ is separable,
%since \cite{Fo} is still applicable.

\begin{lem} \label{bounded}
Let $h \ge 0$ be a measurable function on a finite measure space $(S, \Sigma, \sigma)$.
If $\int hf\,d\sigma < \infty$ for every $0 \le  f \in L_1(\sigma)$, 
then $h \in L_\infty(\sigma)$.
\end{lem}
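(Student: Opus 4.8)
The plan is a direct "gliding hump" construction, arguing by contradiction. Suppose $h$ is not essentially bounded, so that the sets $A_k := \{s \in S : h(s) > k\}$ satisfy $\sigma(A_k) > 0$ for every $k \in \N$. The first step is to note that $h$ is itself integrable: applying the hypothesis to the constant function $f \equiv 1$ (which lies in $L_1(\sigma)$ because $\sigma(S) < \infty$) gives $\int_S h\,d\sigma < \infty$. Hence the numbers $\epsilon_k := \int_{A_k} h\,d\sigma$ satisfy $0 < \epsilon_k \le \int_S h\,d\sigma < \infty$, and since $h > k$ on $A_k$ we get the key estimate $\epsilon_k \ge k\,\sigma(A_k)$.

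The heart of the argument is to exhibit a single nonnegative $f \in L_1(\sigma)$ with $\int_S hf\,d\sigma = \infty$. The choice is
$$
f := \sum_{k=1}^\infty \frac{1}{k\,\epsilon_k}\,\mathbf 1_{A_k}\ \ge 0 .
$$
By monotone convergence and the estimate $\epsilon_k \ge k\,\sigma(A_k)$,
$$
\int_S f\,d\sigma = \sum_{k=1}^\infty \frac{\sigma(A_k)}{k\,\epsilon_k} \le \sum_{k=1}^\infty \frac{1}{k^2} < \infty ,
$$
so $f \in L_1(\sigma)$ (in particular $f < \infty$ a.e.), while again by monotone convergence
$$
\int_S hf\,d\sigma = \sum_{k=1}^\infty \frac{1}{k\,\epsilon_k}\int_{A_k} h\,d\sigma = \sum_{k=1}^\infty \frac1k = \infty ,
$$
contradicting the hypothesis. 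Therefore $h \in L_\infty(\sigma)$.

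I do not expect a genuine obstacle here; the only point requiring care is to balance the weights $c_k = 1/(k\,\epsilon_k)$ so that $\sum_k c_k\,\sigma(A_k)$ converges while $\sum_k c_k\,\epsilon_k$ diverges, and the inequality $\epsilon_k \ge k\,\sigma(A_k)$ is exactly what makes this possible. (An alternative, slightly more abstract route: the hypothesis shows $f \mapsto \int_S hf\,d\sigma$ is a well-defined linear functional on $L_1(\sigma)$, a standard gliding-hump argument shows it is bounded, and $L_1(\sigma)^* = L_\infty(\sigma)$ — valid since $\sigma$ is finite — then identifies it with an $L_\infty$ function that must equal $h$ a.e.; the explicit construction above is shorter and self-contained.)
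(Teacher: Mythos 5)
Your proof is correct, and it takes essentially the same gliding-hump contradiction route as the paper: the paper picks unit-norm $f_n\in L_1$ with $\int hf_n\,d\sigma>2^n$ and sums $2^{-n}|f_n|$, whereas you realize such test functions explicitly as weighted indicators of the level sets $A_k=\{h>k\}$, balancing the weights $1/(k\,\epsilon_k)$ so that the $L_1$-norm converges while $\int hf\,d\sigma$ diverges. The two constructions differ only in bookkeeping; yours has the minor merit of making explicit (via the level sets and the bound $\epsilon_k\ge k\,\sigma(A_k)$) the step the paper leaves implicit, namely why essential unboundedness of $h$ yields the required test functions.
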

\begin{proof} If $h$ is not in $L_\infty$, for every $n$ there exists $f_n \in L_1$ with 
$\|f_n\|_1=1$ and $|\int hf_n \,d\sigma| > 2^n$. Then $\int h|f_n|\,d\sigma > 2^n$, and 
$f =\sum_{n=1}^\infty  2^{-n}|f_n|$ satisfies $\|f\|_1=1$. But $\int hf\,d\sigma =
\sum_{n=1}^\infty 2^n \int h|f_n|d\sigma =\infty$, a contradiction.
\end{proof}

%{\bf Example.} {\it Commuting transformations with a joint non-double coboundary in any $L_p$}
%
%The measure space is $\mathbb Z^2$ with the counting measure $m$.The transformations are
%$\theta(j,k) =(j+1,k)$ and $\tau(j,k)=(j,k+1)$. Fix $1 \le p<\infty$, and put 
%$X=L_p(m)=\ell_p(\mathbb Z^2)$. A function $f \in X$ is given by $(f_{j,k})_{j.k \in \mathbb Z}$.
%We put $Uf =f \circ \theta$ and $Vf=f\circ \tau$, so $(Uf)_{j,k}= f_{j+1,k}$ and 
%$(Vf)_{j,k}=f_{j,k+1}$. Then $U$ and $V$ are commuting invertible isometries of $X$.
%For $p=2$, Theorem \ref{mpt} yields the existence of $g \in \ell_2(\mathbb Z^2)$ which is
%a joint cobundary and not a double  coboundary. Our construction below applies also to $p=2$.
%
%Let $h_{j,k}=1/j^2k^3$ for $j \ge 1,\ k\ge1$, $h_{j,k}=0$ otherwise. Put $f =(I-U)h$.
%Then $f_{j,k} =h_{j,k} -h_{j+1,k}$. Since $V^n \to 0$ in the weak operator topology, and $V$ 
%has no fixed points, $f = (I-V)g$ if and only if $\sum_{n=0}^\infty V^nf$ convergs weakly to $g$. 
%We have to find $g \in X$ satisfying
%$$
%g_{j,k}= \sum_{n=0}^\infty f_{j,k+n} = \sum_{n=0}^\infty [h_{j,k+n}-h_{j+1,k+n}] .
%$$
%For $j<0$ we have $g_{j,k}=0$. For $j=0$ we obtain $g_{0,k} = -\sum_{n=0}^\infty h_{1,k+n}=
%-sum_{n=k}^\infty  n^{-3}$ for $ k\ge 1$ and $g_{0,k} = - \sum_{n=1}^\infty n^{-3}$ for $k<1$.
%\medskip

{\bf Remark.} The condition that the $Z^2$-action be free is not necessary. 
If $\theta$ is aperiodic on a standard probability space, then $\sigma(U) =\mathbb T$,
so $U$ is not uniformly ergodic, and taking $\tau=\theta$ the theorem holds, since 
$(I-U)H \ne (I-U)^2H$, by the remark to Theorem \ref{powers}.
\medskip

{\bf Example.} {\it Commuting transformations with a joint non-double coboundary in any $L_p$}

The measure space is $\mathbb N^2$ with the counting measure $m$. The transformations are
$\theta(j,k) =(j+1,k)$ and $\tau(j,k)=(j,k+1)$ for $j\ge 1,\  k\ge 1$. The counting measure is 
not invariant, but subinvariant for $\theta$ and $\tau$. Fix $1 \le p<\infty$, and put 
$X:= \ell_p(\mathbb N^2) := \{(f_{j,k}): \sum_{j,k =1}^\infty  |f_{j,k}|^p < \infty\}$. 
Define  $Uf =f\circ \theta$ and $Vf=f\circ\tau$, so $(Uf)_{j,k} =f_{j+1,k}$ and
$(Vf)_{j,k}= f_{j,k+1}$. Then $U$ and $V$ are commuting contractions of $X$,
satisfying $U^n \to 0$ and $V^n \to 0$ strongly.

Fix $p>1$ and put $a=p+1$. Define $h \in X$ by $h_{j,k} =1/(j+k)^{a/p}$ for $j,k \ge 1$. Then $h \in X$.
By definition $Uh=Vh= \big(1/(j+k+1)^{a/p}\big)$, so $f:=(I-U)h$ is  a joint coboundary.
%$f_{j,k} =h_{j,k} -h_{j+1,k}$. To see that $f$ is a joint coboundary,
%we solve $f =(I-V)g$ by putting $g =\sum_{n=0}^\infty V^n f$ (which should converge to $g$,
%since $V^ng \to 0$). Thus we look for $(g_{j,k})$ satisfying
%$$
%g_{j,k} =\sum_{n=0}^\infty f_{j,k+n} = \sum_{n=0}^\infty [h_{j,k+n}-h_{j+1,k+n}] =
%\sum_{n=k}^\infty \frac1{n^3}[\frac1{j^2} -\frac1{(j+1)^2}] =
%\frac{2j+1}{j^2(j+1)^2} \sum_{n=k}^\infty \frac1{n^3}.
%$$
%Then $g_{j,k} \le \frac2{j^3} \frac1{2(k-1)^2}=\frac1{j^3 (k-1)^2}$, which shows that $g \in X$.
%Hence $f$ is a joint coboundary. 
Suppose $f=(I-U)(I-V)q$ for some $q \in X$. Then
$(I-U)[h-(I-V)q]=0$, and since $U$ has no fixed points, $h=(I-V)q$, which yields
$q=\sum_{n=0}^\infty V^n h$. Hence 
$$q_{j,k} =\sum_{n=0}^\infty h_{j,k+n} = \sum_{n=0}^\infty \frac1{(j+k+n)^{a/p}} =
\sum_{n=j+k}^\infty \frac1{n^{a/p}} \ge  \frac{c}{(j+k)^{(a-p)/p}}.
$$
Since $a-p=1$, we have $\sum_{k=1}^\infty |q_{j,k}|^p = \infty$, so $q \notin X$, a contradiction; 
hence $f$ is not a double coboundary. Note that the solution $q$ is in the larger space
$\ell_r(\mathbb N^2)$ for $r >2p$.
\smallskip

Now let $p=1$, and define $h$ by $h_{j,k} = 1/\big[(j+k)^2[\log(j+k)]^2\big]$. Then $h \in X$, since
$$
\sum_{j=1}^\infty\sum_{k=1}^\infty h_{j,k} \le 
\sum_{j=1}^\infty \sum_{k=1}^\infty \frac1{(j+k)^2[\log(j+1)]^2} =
$$
$$
\sum_{j=1}^\infty \frac1{[\log(j+1)]^2}\sum_{k=j+1}^\infty \frac1{k^2} \le 
\sum_{j=1}^\infty \frac1{j[\log(j+1)]^2} < \infty.
$$
As before, $Vh=Uh$ and $f=(I-U)h$ is a joint coboundary. If $f =(I-U)(I-V)q$, then, as before,
$q = \sum_{n=0}^\infty V^n h$, so
$$
q_{j,k} = \sum_{n=0}^\infty h_{j,k+n} = \sum_{n=j+k}^\infty \frac1{n^2[\log n]^2} \ge
\sum_{n=j+k}^\infty \frac1{n^{5/2}} \ge \frac{c}{(j+k)^{3/2}}
$$
when $j>J_0$. Then $\sum_{k=1}^\infty q_{j,k} \ge c \sum_{k=j+1}^\infty k^{-3/2} \ge c'j^{-1/2}$
for $j>J_0$, so $q \notin X$. This contradiction shows that $f$ is not a double coboundary.
Since $q_{j,k} \le \sum_{n=j+k}^\infty n^{-2} \le (j+k-1)^{-1}$, we obtain that 
$q \in \ell_r(\mathbb N^2)$ for $r >2$.
%Hence $h_{j,k}= q_{j,k} -q_{j,k+1}$. Then 
%$$
%q_{j,1} -q_{j,K} = \sum_{k=1}^{K-1} h_{j,k} = \frac1{j^2}\sum_{k=1}^{K-1}\frac1{k^3} =
%\frac1{j^2}(C-\sum_{n=K}^\infty \frac1{k^3}).
%$$
%But $C=\sum_{k=1}^\infty \frac1{k^3} >1$, and the tail of the series is between $\frac1{2K^2}$
%and $\frac1{2(K-1)^2}$, so for $K$ large the last term is larger than 1, and then
% wrong below!!
%$$
%|q_{j,K}| =\big| q_{j,1}-\frac1{j^2}(C-\sum_{n=K}^\infty \frac1{k^3})\big| \ge
%\frac1{j^2} \big(C-\frac1{2(K-1)^2}- 1\big) \underset{K\to\infty}\longrightarrow  C-1>0.
%$$
%Hence for fixed $j$ the series $\sum_{K=1}^\infty |q_{j,K}|^p$ does not converge, 
%contradicting $q \in X$, which proves that $f$ is not a double coboundary.
\smallskip

\bigskip

\section{Joint coboundaries of irrational rotations of the circle }

In this section we look at two irrational rotations of the unit circle $\mathbb T$. In 
this case we refine Theorem \ref{mpt}, showing first the existence of a joint coboundary 
in $C(\mathbb T)$ which is not a double coboundary, not even in $L_1(\mathbb T)$,
and then exhibit such a joint coboundary which is not even a measurable double coboundary.
\smallskip

Let $\alpha$ be a real number.  We denote $e(x):= \e^{2\pi i x}$. 
For $f \in L_1(\mathbb T)$ we define $T_\alpha f(z) = f(e(\alpha)z)$,
which preserves Lebesgue's measure on $\mathbb T$.  The operator $T_\alpha$ is a
contraction of all the $L_p(\mathbb T)$ spaces, $1 \le p \le \infty$, 
and is mean ergodic for $1\le p < \infty$. It is also a mean ergodic contraction of 
$C(\mathbb T)$. If $\beta$ is rational, then for some $k$
we have $T_\beta^k=I$, so $T_\beta$ is uniformly ergodic, and by Lemma \ref{same}
every joint coboundary of $T_\beta$ and any $T_\alpha$ in $C(\mathbb T)$ or in $L_p$, 
$1\le p<\infty$, is a double coboundary.

Let $\alpha$ and $\beta$ be irrational numbers.
% in the interval $(0,1)$. 
Clearly, if $\alpha\equiv \beta \mod 1$ (i.e. $\alpha-\beta \in \mathbb Z$), 
then $T_\alpha=T_\beta$. Each $T_\alpha$ is invertible, with
$T_\alpha^{-1}=T_{-\alpha}$, and for any integer $n$, $T_{n\alpha} =T_\alpha^n$.

Since $I-T_\beta = T_\beta(T_\beta^{-1} -I)$, the double or joint coboundaries of
$T_\alpha, T_\beta$ are the same as the respective ones of $T_\alpha,T_\beta^{-1}$.

Let $\phi$ be a  centered trigonometric  polynomial $\sum_{|k|=1}^n a_k e(kx)$. 
Then for any irrational $\alpha$  we have 
$(I-T_\alpha) \sum_{|k|=1}^n \frac{a_k}{1-e(k\alpha)}e(kx) = \phi$. Hence $\phi$ is a 
double coboundary in $C(\mathbb T)$ for any two irrationals $\alpha $ and $\beta$. 

The  question for "how many" irrational rotations 
%a given continuous function is a coboundary (in $L_1(\mathbb T)$ ) was studied by 
%Baggett \cite{Ba}. The more general question for how many rotations 
an $L_1(\mathbb T)$ function is a coboundary in $L_1(\mathbb T)$ was studied by 
Baggett et al. \cite{BMM}.

\begin{theo} \label{dependent}
Let $\alpha$ and $\beta$ be positive irrational numbers such that $\{\alpha,\beta,1\}$ are
linearly dependent over $\mathbb Z$ (i.e., there exist $m,n,p \in \mathbb Z$ with 
$m \ne 0$ and $m\alpha+n\beta+p=0$). If the g.c.d. of $(|m|,|n|)$ is 1,
then, in $C(\mathbb T)$ and in any $L_p(\mathbb T)$, the rotations $T_\alpha$ and $T_\beta$ 
have a joint coboundary which is not a double coboundary.
\end{theo}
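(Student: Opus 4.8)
The plan is to pass to Fourier coefficients and use the dependence relation to manufacture a lacunary trigonometric series that is a coboundary in $C(\mathbb T)$ for $T_\alpha$ and, separately, for $T_\beta$, but whose hypothetical ``double antiderivative'' would violate the Riemann–Lebesgue lemma. Recall that $\widehat{T_\gamma f}(k)=e(k\gamma)\hat f(k)$, so $(I-T_\gamma)f$ has $k$-th Fourier coefficient $(1-e(k\gamma))\hat f(k)$. For real $t$ write $\|t\|:=\min_{j\in\mathbb Z}|t-j|$; since $|1-e(t)|=2|\sin\pi t|$ one has $4\|t\|\le|1-e(t)|\le 2\pi\|t\|$. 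Two elementary facts drive the argument: (a) if $f$ has Fourier support in $\{\pm N_j\}$ and $\sum_j|\hat f(N_j)|/|1-e(N_j\gamma)|<\infty$, then the function $g$ with $\hat g(\pm N_j)=\hat f(\pm N_j)/(1-e(\pm N_j\gamma))$ is an absolutely convergent, hence continuous, Fourier series solving $(I-T_\gamma)g=f$; (b) if $f=(I-T_\alpha)(I-T_\beta)h$ with $h\in L_1(\mathbb T)$, then $\hat h(k)=\hat f(k)\big/\big[(1-e(k\alpha))(1-e(k\beta))\big]$, and this sequence must tend to $0$ by Riemann–Lebesgue.

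First I would extract the arithmetic heart of the construction. From $m\alpha+n\beta+p=0$ with $p\in\mathbb Z$ (note $n\ne0$, else $\alpha=-p/m$ would be rational) we get $n\beta=-m\alpha-p$, whence for every $q\in\mathbb Z$
$$
\|qn\beta\|=\|qm\alpha\|.
$$
Let $(q_k)$ be the denominators of the continued-fraction convergents of $\alpha$, so $\|q_k\alpha\|<1/q_{k+1}$ and $q_{k+1}\to\infty$. Choose $k_1<k_2<\cdots$ with $q_{k_j+1}>2^j$ and set $N_j:=|n|\,q_{k_j}$. Using $\|ct\|\le|c|\,\|t\|$ for $c\in\mathbb Z$,
$$
\|N_j\alpha\|\le|n|\,\|q_{k_j}\alpha\|<|n|\,2^{-j},\qquad
\|N_j\beta\|=\|n\,q_{k_j}\beta\|=\|m\,q_{k_j}\alpha\|\le|m|\,\|q_{k_j}\alpha\|<|m|\,2^{-j}.
$$
Thus the $N_j$ are distinct positive integers tending to $\infty$, the numbers $1-e(N_j\alpha)$ and $1-e(N_j\beta)$ are nonzero, and $\sum_j\|N_j\alpha\|<\infty$, $\sum_j\|N_j\beta\|<\infty$.

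Now I would define the example: put $c_j:=(1-e(N_j\alpha))(1-e(N_j\beta))$ and
$$
f(x):=\sum_{j=1}^\infty \Re\!\big(c_j\,e(N_jx)\big)=\sum_{j=1}^\infty \tfrac12\big(c_j\,e(N_jx)+\overline{c_j}\,e(-N_jx)\big),
$$
so $\hat f(N_j)=\tfrac12 c_j$, $\hat f(-N_j)=\tfrac12\overline{c_j}$, and $|c_j|\le(2\pi)^2\|N_j\alpha\|\,\|N_j\beta\|\le C\,4^{-j}$; hence the series converges uniformly and $f\in C(\mathbb T)$ (the real part is taken only to land in the real spaces and may be dropped for the complex ones). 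Applying fact (a) with $\gamma=\alpha$: since $|\hat f(N_j)|/|1-e(N_j\alpha)|=\tfrac12|1-e(N_j\beta)|\le\pi\|N_j\beta\|$ is summable, there is a continuous $g_1$ with $(I-T_\alpha)g_1=f$; symmetrically a continuous $g_2$ with $(I-T_\beta)g_2=f$. Hence $f\in(I-T_\alpha)C(\mathbb T)\cap(I-T_\beta)C(\mathbb T)$ is a joint coboundary in $C(\mathbb T)$, a fortiori in every $L_p(\mathbb T)$.

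Finally, $f$ is not a double coboundary in $L_1(\mathbb T)$, hence in no $L_p(\mathbb T)$ (a smaller space) and not in $C(\mathbb T)$ either: if $f=(I-T_\alpha)(I-T_\beta)h$ with $h\in L_1$, then by fact (b) we would have $\hat h(N_j)=\hat f(N_j)\big/\big[(1-e(N_j\alpha))(1-e(N_j\beta))\big]=\tfrac12$ for every $j$, so $|\hat h(N_j)|\not\to0$, contradicting Riemann–Lebesgue. Thus a single $f$ works for all the spaces at once. The step I expect to be the crux is the arithmetic one — producing one frequency sequence $(N_j)$ along which both $\|N_j\alpha\|$ and $\|N_j\beta\|$ decay summably; for badly approximable $\alpha,\beta$ this is impossible, so the dependence hypothesis on $\alpha$ and $\beta$, condensed into the identity $\|qn\beta\|=\|qm\alpha\|$, is exactly what makes the construction possible.
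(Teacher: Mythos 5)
Your proof is correct, but it is not the paper's proof of this theorem; it is essentially a specialization of the method the paper uses later for Theorem \ref{irrational-pair}. The paper's argument here is purely operator-theoretic: the hypothesis $\gcd(|m|,|n|)=1$ is used via B\'ezout to write $m(\alpha+jp)+n(\beta+kp)=0$, so that with $\gamma=(\alpha+jp)/n$ one has $T_\alpha=T_\gamma^{\,n}$ and $T_\beta^{\pm1}=T_\gamma^{\,|m|}$, and then the abstract Theorem \ref{powers} (two powers of a mean ergodic, non uniformly ergodic contraction admit a joint coboundary that is not a double coboundary) finishes the proof simultaneously in $C(\mathbb T)$ and every $L_p$, with no Fourier analysis. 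You instead condense the rational dependence into the identity $\|qn\beta\|=\|qm\alpha\|$, use continued-fraction denominators of $\alpha$ to get one frequency sequence $N_j=|n|q_{k_j}$ along which both $\|N_j\alpha\|$ and $\|N_j\beta\|$ decay geometrically, and then run the coefficient construction (continuity of $f$, $g_1$, $g_2$ from absolute summability; Riemann--Lebesgue to kill any $L_1$ solution of the double equation) --- exactly the scheme of the paper's proof of Theorem \ref{irrational-pair}, where the simultaneous approximation comes from the two-dimensional Dirichlet theorem instead of the dependence relation. Each route buys something: the paper's proof is soft and shows the phenomenon follows from ``general principles'' about powers of a common rotation (this is where the gcd hypothesis is genuinely used), whereas your construction never uses $\gcd(|m|,|n|)=1$, produces one explicit continuous $f$ that works in all the spaces at once, and gives the stronger conclusion that $f$ is not a double coboundary even in $L_1$ --- consistent with the paper's own remark that this theorem is a special case of Theorem \ref{irrational-pair}. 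All the individual steps you give check out ($n\neq0$, $\|ct\|\le|c|\,\|t\|$, $\|q_k\alpha\|<1/q_{k+1}$, the bounds $4\|t\|\le|1-\e^{2\pi it}|\le2\pi\|t\|$, reality of $g_1,g_2$, and the Fourier-coefficient comparison at $k=N_j\neq0$ where the denominators are nonzero).
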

\begin{proof} 
%We rewrite the linear dependence as $m\alpha+n\beta=q \in \mathbb Z$, and we 
%may assume $q \ge 0$. At least one of $m,n$ must be positive ($\alpha,\beta>0$), so we may 
%assume $n>0$. We first assume $q>0$. Then
Since $\alpha$ and $\beta$ are irrational, also $n \ne 0$, and we may assume $n>0$. 
Since the g.c.d. of $|m|$ and $|n|$ is 1, there are integers $j,k$ such that $1=jm+kn$. Hence
%Let $d$ be the g.c.d. of $|m|$ and $|n|$; then there are integers $j,k$ such that $d=jm+kn$. 
%By assumption, $d$ divides $|p|$, so $p=d\ell$ for some integer $\ell$.  Hence
$$
0= m\alpha+n\beta + (jm+kn)p = m(\alpha+ jp) + n(\beta+ kp).
%0= m\alpha+n\beta + (jm+kn)\ell = m(\alpha+ j\ell) + n(\beta+ k\ell).
$$
Put $\gamma:= \frac{\alpha+jp}n=-\frac{\beta+kp}m$. Then $T_\alpha =T_\gamma^n$ and 
%Put $\gamma:= \frac{\alpha+j\ell}n=-\frac{\beta+k\ell}m$. Then $T_\alpha =T_\gamma^n$ and 
$T_{\beta}^{-1}=T_{-\beta }= T_\gamma^m$. If $m>0$, by Theorem \ref{powers}
$T_\alpha$ and $T_\beta^{-1}$  (and therefore $T_\alpha$ and $T_\beta$) have 
a joint coboundary which is not a double coboundary; if $m <0$, then $T_\beta=T_\gamma^{|m|}$,
 and then $T_\alpha $ and $T_\beta$ have a joint coboundary which is not a double coboundary.
\end{proof}

{\bf Remarks.} 1. The assumptions of the theorem are equivalent to assuming that for some 
$\alpha'\equiv \alpha$ and $\beta'\equiv \beta$ we have $\alpha'/\beta'$ rational.
Indeed, if $(\alpha + j)/(\beta+k)$ is a rational $n/m$, then $m\alpha -n\beta + (jm-kn)=0$,
and g.c.d. of $|m|$ and $|n|$ divides $p:=jm-kn$. The converse implication is shown in 
the proof of the theorem.

2. Theorem \ref{dependent} is a special case of Theorem \ref{irrational-pair}, but its proof
follows from general principles.
\smallskip

Theorem \ref{dependent} shows that for every irrational $\alpha \in (0.1)$ there are 
countably infinitely many $\beta$, necessarily irrational (of the form $\beta=r(\alpha+j) +k$,
$\ r$ rational and $j,k$ integers), such that $T_\alpha$ and $T_\beta$ have a joint coboundary 
in $L_p$ ($1 \le p<\infty$)  which is not a double coboundary.

\begin{lem} \label{constants}
Let $T$ and $S$ be induced by commuting ergodic measure preserving transformations
of a probability space $(\Omega,\P)$. If $h$ is  measurable such that
$(I-T)(I-S)h=0$, then $h$ is constant a.e.
%Let $\alpha$ and $\beta$ be irrational numbers. If $h$ is a measurable function such that
%$(I-T_\alpha)(I-T_\beta)h= 0$, then $h$ is constant a.e.
\end{lem}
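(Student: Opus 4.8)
The plan is to strip off the operators one at a time and invoke ergodicity twice, the only real work being to show that an intermediate constant vanishes. Write $Th=h\circ\theta$ and $Sh=h\circ\tau$, where $\theta,\tau$ are the given commuting ergodic measure preserving transformations, and read the hypothesis as an identity between measurable functions. Set $g:=(I-S)h=h-h\circ\tau$, which is a well-defined measurable function (even though $h$ need not be integrable, the composition with a measure preserving map makes sense). Then $(I-T)g=(I-T)(I-S)h=0$, i.e. $g=g\circ\theta$ a.e., so $g$ is $\theta$-invariant; since $\theta$ is ergodic, $g$ equals some constant $c$ a.e. (Commutativity of $\theta$ and $\tau$ is not actually needed for this step, though it lets one run the argument with the roles of $\theta$ and $\tau$ interchanged.)

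The substantive step is to show $c=0$. From $h\circ\tau=h-c$ a.e., let $\mu$ be the distribution of $h$ on $\mathbb C$ (the push-forward of $\P$ under $h$), a Borel probability measure. Because $\tau$ is measure preserving, $h\circ\tau$ also has distribution $\mu$; on the other hand $h\circ\tau=h-c$ has distribution equal to the translate of $\mu$ by $-c$. Hence $\mu$ is invariant under translation by $c$, and therefore by $nc$ for every $n\in\mathbb Z$. If $c\neq 0$, a ball $B$ of radius $\tfrac12|c|$ has pairwise disjoint translates $B+nc$, $n\in\mathbb Z$, all of the same $\mu$-measure, forcing $\mu(B)=0$; since such balls cover $\mathbb C$, this contradicts $\mu(\mathbb C)=1$. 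Thus $c=0$.

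Consequently $(I-S)h=0$, i.e. $h=h\circ\tau$ a.e., so $h$ is $\tau$-invariant, and ergodicity of $\tau$ gives that $h$ is constant a.e. The main obstacle is precisely the vanishing of $c$: one would like to say "integrate the coboundary $(I-S)h$ to get $c=0$", but this is unavailable since $h$ is only assumed measurable, and the translation-invariance-of-the-distribution argument is the replacement for that.
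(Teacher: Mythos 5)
Your proof is correct, and it follows the paper's skeleton -- peel off $I-T$, use ergodicity of $\theta$ to see that $(I-S)h$ is a constant $c$, show $c=0$, then use ergodicity of $\tau$ -- but it handles the crucial middle step by a genuinely different (and self-contained) argument. The paper disposes of the constant by observing that $(I-S)h$, being constant, is integrable, and then citing Anosov's theorem that an integrable measurable coboundary of a measure preserving transformation has integral zero; your replacement is the classical push-forward argument: if $h-h\circ\tau=c$ a.e.\ with $c\neq 0$, the distribution $\mu$ of $h$ would be invariant under translation by $c$, hence by $nc$ for all $n\in\mathbb Z$, which is impossible for a probability measure on $\mathbb C$ since disjoint translates of a small ball would all carry equal mass. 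This is exactly the special case of Anosov's result that the lemma needs, so you trade generality (Anosov's theorem covers arbitrary integrable coboundaries, which the paper also uses elsewhere, e.g.\ in Proposition \ref{Anosov} and Corollary \ref{no-measurable}) for a short, elementary, citation-free proof; your side remark that commutativity of $\theta$ and $\tau$ is not needed here is also accurate, as the paper's proof likewise uses only ergodicity of each transformation.
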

\begin{proof}
Since $T$ is ergodic, $(I-S)h$ is a constant, 
%Since $T_\alpha$ is ergodic, $(I-T_\beta)h$ is a constant, 
hence integrable. By Anosov \cite[Theorem 1]{An} 
%$\int_{\mathbb T} (I-T_\beta)h =0$, so $(I-T_\beta)h=0$,
$\int_\Omega (I-S)h=0$, so $(I-S)h=0$;
by ergodicity of $S$, $h$ is constant a.e.
\end{proof}

For rotations, Lemma \ref{constants} is more general than (and independent of) Proposition 
\ref{homogeneous}(i).

\begin{prop} \label{Anosov}
Let $\alpha$ and $\beta$ be irrational numbers. Then there exist a continuous 
$\phi \in (I-T_\alpha)C(\mathbb T)$ and a measurable non-integrable $h$ such that 
$(I-T_\alpha)(I-T_\beta)h= \phi$, and every measurable $u$ satisfying
$(I-T_\alpha)(I-T_\beta)u= \phi$ is non-integrable.
\end{prop}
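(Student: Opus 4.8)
The plan is to build the continuous coboundary $\phi$ directly from a lacunary Fourier series on $\mathbb{T}$, exploiting the Diophantine behaviour of $\alpha$ and $\beta$ at a carefully chosen sparse sequence of frequencies. Write $\phi(x) = \sum_{k} a_k e(n_k x)$ where $(n_k)$ is a rapidly increasing sequence of positive integers (to be specified) and $a_k > 0$ are summable fast enough that $\phi \in C(\mathbb{T})$; a lacunary series with $\sum |a_k| < \infty$ is automatically in $C(\mathbb{T})$, and centering $\phi$ (no constant term) is free. The formal solution of $(I-T_\alpha)g = \phi$ is $g(x) = \sum_k \frac{a_k}{1 - e(n_k\alpha)} e(n_k x)$, and similarly the formal double-coboundary solution is $h(x) = \sum_k \frac{a_k}{(1-e(n_k\alpha))(1-e(n_k\beta))} e(n_k x)$. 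First I would arrange, by choosing the $n_k$ along the denominators of the continued fraction of $\alpha$ (so that $\|n_k\alpha\|$ is extremely small) while keeping $a_k$ comparably small, that $\sum_k \frac{|a_k|}{|1-e(n_k\alpha)|} < \infty$; this makes $g \in C(\mathbb{T})$ (again lacunary plus absolutely summable), so $\phi \in (I-T_\alpha)C(\mathbb{T})$. The point is to choose $a_k$ roughly of the order of $\|n_k\alpha\|$ times a summable factor, so division by $|1-e(n_k\alpha)| \asymp \|n_k\alpha\|$ still leaves something summable.

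The heart of the argument is to make the double-coboundary solution fail to be integrable, and in fact to make every measurable solution fail to be integrable. For this I would control $\|n_k\beta\|$ from below: since $\alpha$ and $\beta$ are (individually) irrational, but we are free to choose the sparse sequence $(n_k)$ adapted to $\alpha$, one shows that along a suitable subsequence the quantities $\|n_k\beta\|$ are not too small — for instance bounded below by a fixed positive constant, or at least by a quantity that is negligible compared to $\|n_k\alpha\|$. Concretely, having fixed the continued-fraction denominators $q_j$ of $\alpha$, the sequence $(q_j\beta \bmod 1)$ is equidistributed (as $\beta$ is irrational), so one can extract $n_k = q_{j_k}$ with $\|n_k\beta\| \ge 1/4$, say. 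Then $|1-e(n_k\beta)| \asymp 1$, so the Fourier coefficients of $h$ are $\asymp \frac{a_k}{\|n_k\alpha\|}$, and by choosing $a_k / \|n_k\alpha\|$ to be a sequence that is square-summable but not summable — e.g. $\asymp 1/k$ — we get $h \in L_2(\mathbb{T})$ but $h \notin L_1(\mathbb{T})$? No: $\ell_2$ Fourier coefficients give $L_2 \subset L_1$. The correct choice is to make $\sum_k |c_k| = \infty$ where $c_k$ are the Fourier coefficients of $h$, while keeping a lacunary structure; for a lacunary series $\sum c_k e(n_k x)$ with $(n_k)$ Hadamard-lacunary, the Sidon property forces $\|h\|_{L_1} \asymp \sum |c_k|$ (Sidon's theorem), so $\sum |c_k| = \infty$ immediately gives $h \notin L_1$. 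Thus I would take $a_k$ so that $\frac{a_k}{|1-e(n_k\alpha)||1-e(n_k\beta)|} \asymp \frac{a_k}{\|n_k\alpha\|}$ is a positive non-summable sequence (e.g. $1/k$), while $\frac{a_k}{\|n_k\alpha\|}$ being non-summable is compatible with $\frac{a_k}{\|n_k\alpha\|}$ tending to $0$ and with $a_k$ itself summable (which needs $\|n_k\alpha\|$ to decay fast, automatic for continued-fraction denominators).

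To upgrade ``$h \notin L_1$'' to ``every measurable $u$ with $(I-T_\alpha)(I-T_\beta)u = \phi$ is non-integrable'', I would argue as follows. Suppose $u$ is measurable with $(I-T_\alpha)(I-T_\beta)u = \phi$. If $u$ were integrable, then by Lemma~\ref{constants} the difference $u - h'$ (for any other integrable solution) is constant; but more directly, if $u \in L_1(\mathbb{T})$ then $u$ has Fourier coefficients, and comparing Fourier coefficients in $(I-T_\alpha)(I-T_\beta)u = \phi$ forces $\hat{u}(n_k)(1-e(n_k\alpha))(1-e(n_k\beta)) = a_k$ for every $k$ and $\hat u(n) = 0$ for $n$ not among the $n_k$ (plus a possible constant term), so $\hat{u}(n_k) = c_k$ with $\sum|c_k| = \infty$. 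Since $(n_k)$ is Hadamard-lacunary, an integrable function with Fourier support in $(n_k)$ and with $\sum |\hat u(n_k)| = \infty$ cannot exist (Sidon/lacunarity again, or the Kolmogorov–Zygmund theorem that lacunary $L_1$ functions are in $L_2$ hence have $\ell_2$ coefficients — but $\ell_2$ still allows $\sum|c_k|=\infty$, so the sharper Sidon argument $\|u\|_{L_1}\gtrsim\sum|\hat u(n_k)|$ is what is needed). This contradiction shows no integrable $u$ exists. The main obstacle, and the step I would spend the most care on, is the simultaneous Diophantine bookkeeping in the second paragraph: choosing one sequence $(n_k)$ that is genuinely Hadamard-lacunary, that makes $\|n_k\alpha\|$ small enough for $a_k$ to be summable (so $\phi \in C(\mathbb{T})$ and $g \in C(\mathbb{T})$), and that simultaneously keeps $\|n_k\beta\|$ bounded away from $0$ so the double-coboundary coefficients are forced to be non-summable — all of this has to be done with explicit quantitative choices, and one must double-check that lacunarity of $(n_k)$ survives the extraction to a subsequence of the $\alpha$-denominators (it does, since $q_{j+1}/q_j \ge$ a ratio that may degenerate, so one passes to a further subsequence to restore a uniform gap). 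Once the sequence and coefficients are pinned down, the rest is the standard lacunary/Sidon machinery.
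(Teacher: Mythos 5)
Your construction has two genuine gaps, and they sit exactly at the crux of the statement. First, the Diophantine step is not available for arbitrary irrationals: you propose to choose $n_k$ among the continued-fraction denominators $q_j$ of $\alpha$ and then extract a subsequence with $\|n_k\beta\|\ge 1/4$, justified by the claim that $(q_j\beta \bmod 1)$ is equidistributed. That claim is false in general --- the proposition must cover every pair of irrationals, including $\beta=\alpha$ (or $\beta$ rationally dependent on $\alpha$), in which case $\|q_j\beta\|\le \|q_j\alpha\|\cdot\mathrm{const}\to 0$ and no such subsequence exists. Equidistribution of $(q_j\beta)$ along a lacunary sequence holds only for almost every $\beta$, not for an adversarially chosen one.

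Second, and more fundamentally, you never produce the measurable non-integrable $h$ that the proposition asserts. Your dichotomy is stuck: if the formal coefficients $c_k=\hat\phi(n_k)/\bigl((1-e(n_k\alpha))(1-e(n_k\beta))\bigr)$ are square-summable, then $h\in L_2\subset L_1$ is integrable, defeating the conclusion; if they are not square-summable, the lacunary series diverges a.e.\ (Zygmund) and defines no measurable function, and a non-integrable measurable solution has no Fourier coefficients at all, so nothing in your Fourier bookkeeping exhibits one. Moreover the inequality you lean on to exclude integrable solutions, $\|u\|_{L_1}\gtrsim \sum_k|\hat u(n_k)|$ for Hadamard-lacunary spectrum, is false: Sidon's theorem controls bounded (or continuous) functions, while for lacunary spectra an $L_1$ function only satisfies $\|u\|_1\asymp\|\hat u\|_{\ell_2}$, and $\ell_2$ coefficients can fail to be summable (e.g.\ $c_k=1/k$ on $n_k=2^k$). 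The non-existence of integrable solutions could be repaired by forcing $\sum_k|c_k|^2=\infty$, but that makes the existence of a measurable solution even more doubtful within your framework. This existence is precisely the hard content that the paper imports from Anosov's theorem: for the single rotation $T_\beta$ there exist $f\in C(\mathbb T)$ and a measurable, non-integrable $h$ with $(I-T_\beta)h=f$; one then sets $\phi=(I-T_\alpha)f$, so $(I-T_\alpha)(I-T_\beta)h=\phi$, and Lemma \ref{constants} shows any measurable solution $u$ differs from $h$ by a constant, hence is non-integrable. Note that once a measurable non-integrable solution exists, the non-existence of integrable solutions is automatic from Lemma \ref{constants}, so your effort is aimed at the easier half while the harder half is left unproved.
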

\begin{proof} By Anosov \cite[Theorem 2]{An} (see also \cite{Ko1}, \cite[Theorem 3]{BMM}), 
there exist $f \in C(\mathbb T)$  and $h$ measurable non-integrable with $(I-T_\beta)h=f$. We put 
$$
\phi =(I-T_\alpha)f =(I-T_\alpha)(I-T_\beta)h.
$$
Let $u$ be measurable and satisfy $(I-T_\alpha)(I-T_\beta)u=\phi$. Then $u-h=const$
by Lemma \ref{constants}, so $u=h+const$ is not integrable, since $h$ is not.
%$(I-T_\alpha)[f-(I-T_\beta)u]=0$, so $(I-T_\beta)u=f+const \in C(\mathbb T)$. 
%By \cite[Theorem 1]{An} $\int_{\mathbb T}(I-T_\beta)u=0$, hence $(I-T_\beta)u = f =(I-T_\beta)h$, 
\end{proof}

The following corollary strengthens a particular case of Theorem \ref{dependent}.

\begin{cor} \label{anosov}
Let $\alpha$ and $\beta$ be irrational numbers, such that $T_\alpha=T_\beta^k$ for some 
$k\in\mathbb Z$. Then there exist continuous functions $f$ and $g$ such that 
$(I-T_\alpha)f=(I-T_\beta)g$, but there is no integrable $h$ with
 $(I-T_\alpha)(I-T_\beta)h=(I-T_\alpha)f$.
\end{cor}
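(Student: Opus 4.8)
\textbf{Proof proposal for Corollary \ref{anosov}.}

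The plan is to reduce the statement to Proposition \ref{Anosov} by exploiting the hypothesis $T_\alpha = T_\beta^k$. First I would dispose of the trivial or degenerate cases: if $k=0$ then $T_\alpha = I$, which is impossible since $\alpha$ is irrational; if $k<0$ we may replace $k$ by $|k|$ after passing to $T_\beta^{-1}$ (which has the same joint and double coboundaries with $T_\alpha$, as observed in the text), so we may assume $k \ge 1$. The case $k=1$ gives $T_\alpha = T_\beta$, and then the conclusion is immediate from Proposition \ref{Anosov} with $g$ chosen appropriately: take the continuous $\phi$ and measurable non-integrable $h$ from that proposition with $(I-T_\beta)h = f$ continuous, set $g := f = (I-T_\beta)h$... but here one must be a little careful because the roles of $\alpha$ and $\beta$ in Proposition \ref{Anosov} are not symmetric. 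So the core case to handle is $k \ge 2$.

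For $k \ge 2$, the key step is the telescoping identity already used in the proof of Theorem \ref{powers}: since $T_\alpha = T_\beta^k$,
$$
I - T_\alpha = I - T_\beta^k = \Big(\sum_{n=0}^{k-1} T_\beta^n\Big)(I - T_\beta).
$$
Apply Proposition \ref{Anosov} (with the two rotations being $T_\beta$ and $T_\beta$ itself, or more directly invoke Anosov's theorem as in its proof) to obtain $f_0 \in C(\mathbb T)$ and a measurable non-integrable $h$ with $(I-T_\beta)h = f_0$. Now set
$$
f := \Big(\sum_{n=0}^{k-1} T_\beta^n\Big) f_0, \qquad g := \Big(\sum_{n=0}^{k-1} T_\beta^n\Big)\big[(I-T_\beta)h\big] = (I - T_\alpha)h.
$$
Wait --- I must instead produce $g$ continuous with $(I-T_\alpha)f = (I-T_\beta)g$. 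The clean way: let $\phi := (I-T_\alpha)f = (I-T_\alpha)(I-T_\beta)h$, which by the display above equals $(I-T_\beta)\big[\big(\sum_{n=0}^{k-1}T_\beta^n\big)(I-T_\beta)h\big] = (I-T_\beta)\big[(I-T_\alpha)h\big]$. But $(I-T_\alpha)h = (I-T_\beta^k)h = \sum_{n=0}^{k-1} T_\beta^n (I-T_\beta)h = \sum_{n=0}^{k-1}T_\beta^n f_0$, which is a finite sum of translates of a continuous function, hence continuous. So set $g := (I-T_\alpha)h \in C(\mathbb T)$; then $\phi = (I-T_\alpha)f = (I-T_\beta)g$ as required, with both $f,g \in C(\mathbb T)$.

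It remains to show there is no integrable $H$ with $(I-T_\alpha)(I-T_\beta)H = \phi$. Suppose $H$ is such. Then $(I-T_\alpha)(I-T_\beta)(H-h) = 0$, and since $T_\alpha$ and $T_\beta$ are induced by commuting ergodic (indeed irrational rotation) transformations, Lemma \ref{constants} forces $H - h$ to be constant a.e., whence $H = h + c$ is non-integrable, contradicting $H \in L_1$. The main obstacle, and the place requiring care, is the bookkeeping of which rotation plays which role: Proposition \ref{Anosov}/Anosov's theorem must be invoked for $T_\beta$ (so that the continuous $f_0$ and the non-integrable primitive $h$ both live "on the $\beta$ side"), and one must verify that $(I-T_\alpha)h$ is genuinely continuous --- which works precisely because $I - T_\alpha = I - T_\beta^k$ telescopes into a finite operator sum applied to the continuous function $(I-T_\beta)h$. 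Everything else is a routine combination of the telescoping identity, Lemma \ref{constants}, and the uniqueness in the ergodic decomposition.
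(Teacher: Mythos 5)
Your proof is correct and takes essentially the same route as the paper: obtain, via Anosov's theorem (Proposition \ref{Anosov}), a continuous $f$ and a measurable non-integrable $h$ with $(I-T_\beta)h=f$, use the telescoping $I-T_\alpha=I-T_\beta^k=\big(\sum_{j=0}^{k-1}T_\beta^j\big)(I-T_\beta)$ to get $g=(I-T_\alpha)h=\sum_{j=0}^{k-1}T_\beta^jf\in C(\mathbb T)$ with $(I-T_\alpha)f=(I-T_\beta)g$, and exclude an integrable solution by Lemma \ref{constants}, exactly as in Proposition \ref{Anosov}. Only a cosmetic point: your initially displayed definition $f:=\big(\sum_{n=0}^{k-1}T_\beta^n\big)f_0$ must be discarded, since the subsequent ``clean'' argument, with $\phi=(I-T_\alpha)f=(I-T_\alpha)(I-T_\beta)h$, presupposes $f=f_0=(I-T_\beta)h$.
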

\begin{proof}
We prove for $k >0$. Let $h$ and $f$ be as in the proof of Proposition \ref{Anosov}. Then
$\phi=(I-T_\alpha)f =(I-T_\beta)g$ with $g =\sum_{j=0}^{k-1}T_\beta^jf \in C(\mathbb T)$.
Proposition \ref{Anosov} shows that any $u$ with $(I-T_\alpha)(I-T_\beta)u=(I-T_\alpha)f$ is
non-integrable.
\end{proof}

{\bf Remark.} Proposition \ref{Anosov} and Corollary \ref{anosov} hold also when
we replace $T_\alpha$ and $T_\beta$ by $T$ and $S$ induced by commuting uniquely ergodic 
homeomorphisms of a compact metric space; the proofs are similar, with \cite[Theorem 2]{An}
replaced by its extension  by Kornfeld \cite{Ko1}.
\medskip
\smallskip

Our main purpose now is to study the existence of a joint coboundary for $T_\alpha$ and 
$T_\beta$ which is not a double coboundary, when $\alpha$ and $\beta$ are irrationals (which
are not rationally dependent). Wintner \cite{Wi} seems to have been the first to study coboundaries 
of rotation operators $T_\alpha$ for $\alpha$ irrational (using Fourier series methods).

As usual,  we denote by $\{\cdot\}$ the fractional part.  For a real number $\alpha$ we 
denote by $\|\alpha\|$ its distance from the nearest integer, so 
$\|\alpha\|=\min(\{\alpha\},1-\{\alpha\})$.
\smallskip

\begin{theo} \label{irrational-pair}
Let $\alpha$ and $\beta$ be irrational numbers.  Then: 

(i) There exists a joint coboundary for $T_\alpha$ and $T_\beta$ acting in $C(\mathbb T)$
which is not a double coboundary, not even in $L_1(\mathbb T)$.

(ii) For fixed $p \in[1,\infty)$, there exists a joint coboundary for $T_\alpha$ and 
$T_\beta$ acting in $L_p(\mathbb T)$ which is not a double coboundary in $L_1(\mathbb T)$.
\end{theo}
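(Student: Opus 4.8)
The plan is to construct the joint coboundary explicitly through its Fourier expansion on $\mathbb T$, supported on a lacunary set of frequencies selected by simultaneous Diophantine approximation. Writing $f=\sum_k \hat f(k)e(kx)$, one has $(I-T_\alpha)f=\sum_k \hat f(k)(1-e(k\alpha))e(kx)$, and from $|1-e(t)|=2|\sin\pi t|$ one gets $4\|t\|\le|1-e(t)|\le 2\pi\|t\|$. Hence, for a series $\psi=\sum_j a_j\bigl(e(k_jx)+e(-k_jx)\bigr)$ with $a_j>0$ and $0<k_1<k_2<\cdots$, the function $g_\alpha:=\sum_j\bigl(\tfrac{a_j}{1-e(k_j\alpha)}e(k_jx)+\tfrac{a_j}{1-e(-k_j\alpha)}e(-k_jx)\bigr)$ has an absolutely (hence uniformly) convergent Fourier series, is real-valued, and satisfies $(I-T_\alpha)g_\alpha=\psi$, provided $\sum_j a_j/\|k_j\alpha\|<\infty$; similarly for $\beta$. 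On the other hand, if $\psi=(I-T_\alpha)(I-T_\beta)h$ held for some $h\in L_1(\mathbb T)$, then comparing Fourier coefficients at $k_j$ forces $|\hat h(k_j)|=a_j\big/\bigl(|1-e(k_j\alpha)|\,|1-e(k_j\beta)|\bigr)\ge a_j\big/\bigl(4\pi^2\|k_j\alpha\|\,\|k_j\beta\|\bigr)$, which must stay bounded by $\|h\|_1$. So everything reduces to producing $k_j$ and $a_j$ with $\sum_j a_j/\|k_j\alpha\|<\infty$, $\sum_j a_j/\|k_j\beta\|<\infty$, but $a_j\big/\bigl(\|k_j\alpha\|\,\|k_j\beta\|\bigr)\to\infty$.

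For this I would first apply the two-dimensional Dirichlet box principle: for every $N$ there is an integer $1\le q\le N$ with $\max(\|q\alpha\|,\|q\beta\|)\le N^{-1/2}$, and since $\alpha$ is irrational the set of such $q$ is unbounded; extract a strictly increasing subsequence $k_1<k_2<\cdots$ with $k_j\ge 16^j$, so that $\max(\|k_j\alpha\|,\|k_j\beta\|)\le k_j^{-1/2}\le 4^{-j}$. Then set $a_j:=j\,\|k_j\alpha\|\,\|k_j\beta\|$. The two required sums are each dominated by $\sum_j j\,4^{-j}<\infty$ (a fortiori $\sum_j a_j<\infty$, so $\psi$ is itself a genuine function in $C(\mathbb T)$, and $\psi\ne0$ since each $a_j>0$), while $a_j\big/\bigl(\|k_j\alpha\|\,\|k_j\beta\|\bigr)=j\to\infty$. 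This yields $\psi\in C(\mathbb T)$ together with $g_\alpha,g_\beta\in C(\mathbb T)$ such that $(I-T_\alpha)g_\alpha=\psi=(I-T_\beta)g_\beta$ while no $h\in L_1(\mathbb T)$ solves $(I-T_\alpha)(I-T_\beta)h=\psi$; taking real and imaginary parts (exactly as at the end of the proof of Theorem \ref{mpt}) keeps everything in the real spaces, proving (i). Part (ii) is then immediate: $C(\mathbb T)\subset L_p(\mathbb T)$ for $1\le p<\infty$, so the same $\psi$ is a joint coboundary in $L_p(\mathbb T)$ which is not a double coboundary in $L_1(\mathbb T)$ (alternatively one may rerun the lacunary construction with the summability requirements merely relaxed to $\ell_p$-type conditions, since membership in $L_p$ is weaker than continuity).

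I expect the only genuine obstacle to be the simultaneous Diophantine approximation step — finding a single frequency sequence along which $\|k_j\alpha\|$ and $\|k_j\beta\|$ are small enough that dividing $a_j$ by either one separately leaves a summable sequence, yet dividing by their product blows up. The two-variable Dirichlet theorem supplies $\max(\|k\alpha\|,\|k\beta\|)\le k^{-1/2}$ along an infinite set of $k$, and passing to a geometrically growing subsequence gives an arbitrarily large summability margin, so there is in fact ample room, and \emph{no} hypothesis on the arithmetic relation between $\alpha$ and $\beta$ is needed — in contrast with Theorem \ref{dependent}, whose power-trick proof forces $\alpha/\beta$ to be rational up to integer translates. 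The remaining points — uniform convergence of the three Fourier series, the legitimacy of applying $T_\alpha$ and $T_\beta$ term by term, and the passage to real-valued functions — are routine.
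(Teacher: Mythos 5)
Your proof is correct and follows essentially the same route as the paper's: two-dimensional Dirichlet approximation gives a sparse frequency sequence $(k_j)$, the joint coboundary is defined by Fourier coefficients chosen so that division by $1-e(k_j\alpha)$ or $1-e(k_j\beta)$ separately remains absolutely summable (yielding continuous solutions $g_\alpha,g_\beta$), while division by the product is unbounded, contradicting $|\hat h(k_j)|\le\|h\|_1$ for any $L_1$ solution of the double-coboundary equation. The only cosmetic differences are your normalization $a_j=j\,\|k_j\alpha\|\,\|k_j\beta\|$ with the trivial coefficient bound in place of the paper's choice $\hat f_{q_k}=\|q_k\beta\|$ and Riemann--Lebesgue, and your symmetric real-valued setup.
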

\begin{proof}
Since $C(\mathbb T) \subset L_p(\mathbb T) \subset L_1(\mathbb T)$ for $1<p< \infty$,
both parts of the theorem will follow if we produce continuous functions $f$ and $g$ with
$(I-T_\alpha)f =(I-T_\beta)g$ (a joint coboundary in $C(\mathbb T)$), such that
there is no $h \in L_1(\mathbb T)$ satisfying $(I-T_\alpha)f =(I-T_\alpha)(I-T_\beta)h$.
Remember that if $\sum_{n \in \mathbb Z} |a_n| < \infty$, then $f(z):=\sum_n a_n z^n$ is
continuous on $\mathbb T$.
\smallskip

By the two-dimensional Dirichlet theorem \cite[Theorem 200]{HW}, there are infinitely many 
positive integers $q$ such that $\max\{\|q\alpha\|,\|q\beta\|\} < \frac1{\sqrt{q}}$.
We take an increasing subsequence $(q_k)$ of these $q$, such that $\sum_k \frac1{\sqrt{q_k}}$
converges.

Joint coboundaries of $T_\alpha$ and $T_\beta$ in $C(\mathbb T)$ are given by continuous
 functions $f,g $ for which $(I-T_\alpha)f=(I-T_\beta)g$. 
This equality implies the following relations between the Fourier coefficients of $f$ and $g$: 

\begin{equation} \label{relation}
(1-{\rm e}^{2\pi i n\alpha})\hat f_n= (1-{\rm e}^{2\pi i n\beta})\hat g_n \qquad 
\text{for every } n \in \mathbb Z.
\end{equation}
We define $\hat f_n  =\hat g_n=0$ for $n\not\in (q_k)$, and put $\hat f_{q_k} =\|q_k\beta\|$.
By the choice of $(q_k)$, we have $f \in C(\mathbb T)$.
We then define $\hat g_{q_k}$ by the relation (\ref{relation}).

Since $\frac{\sin (\pi x)}{\pi x}$ is positive and decreases on $(0,1/2)$ and tends to 1 
as $x \to 0^+$, we have $\frac2\pi \le \frac{\sin (\pi x)}{\pi x}\le 1$ for $0<x\le \frac12$. 
We then obtain, 
%using  $|\sin(x)| \ge 0.5|x|$ for $|x| < \delta$ and (\ref{two-sides}):
$$
\sum_{n}|\hat g_n|=
\sum_k |\hat f_{q_k}| \frac{|1-{\rm e}^{2\pi i q_k\alpha}|}{|1-{\rm e}^{2\pi i q_k\beta}|}=
\sum_k |\hat f_{q_k}| \frac{|\sin(q_k\pi\alpha)|}{|\sin(q_k\pi\beta)|} =
$$ 
$$
(*)\qquad \sum_k |\hat f_{q_k}| \frac{|\sin(\pi\|q_k\alpha\|)|}{|\sin(\pi\|q_k\beta\|)|}\le
\frac\pi 2 \sum_k |\hat f_{q_k}| \frac{\|q_k\alpha\|}{\|q_k\beta\|}\le
\frac\pi 2 \sum_k \|q_k\alpha\| \le \frac\pi 2 \sum_k \frac1{\sqrt{q_k}} <\infty.
$$
Thus we have found $g \in C(\mathbb T)$ with $(I-T_\beta)g=(I-T_\alpha)f$.

Now, we want to show that  $(I-T_\alpha)f$ is not a double coboundary even in $L_1$: 
that is, there is no $h\in L_1$ such that $(I-T_\alpha)f=(I-T_\alpha)(I-T_\beta)h$ holds. 
Suppose there is; it then implies the following restrictions on the corresponding Fourier 
coefficients of $h$: 
$$
(1-{\rm e}^{2\pi i n\alpha})\hat f_n=
(1-{\rm e}^{2\pi i n\alpha})(1-{\rm e}^{2\pi i n\beta})\hat h_n  \qquad  \text{for every }n.
$$ 
The same computation as above, with $\hat f_n=0$ for $n \notin (q_k)$,  yields:
$$
(**)\qquad |\hat h_{q_k}| =   \frac{|\hat f_{q_k}|}{|1-\e^{2\pi iq_k\beta}|}=
\frac{ |\hat f_{q_k}|}{2|\sin(\pi\|q_k\beta\|)|}\ge 
 \frac{ |\hat f_{q_k}|}{2\pi \|q_k\beta\|}= \frac1{2\pi}.
$$
Since $h \in L_1$, the Riemann-Lebesgue lemma yields $|\hat h_{q_k}|\to 0$, which is
 a contradiction.
\end{proof}

\begin{cor} \label{no-measurable}
Let $\alpha$ and $\beta$ be irrational numbers. Then there exist continuous functions 
$f,g \in C(\mathbb T)$ such that  $(I-T_\alpha)f=(I-T_\beta)g$, but there is no measurable $h$
satisfying $(I-T_\alpha)(I-T_\beta)h=(I-T_\alpha)f$ 
(i.e. $(I-T_\alpha)f$ is not a measurable double coboundary).
\end{cor}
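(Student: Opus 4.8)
\emph{Plan and reduction.} I would reduce, via Anosov's theorem, to producing a single continuous $f$ (together with a continuous $g$) with $(I-T_\alpha)f=(I-T_\beta)g$ but $f$ \emph{not} a measurable coboundary of $T_\beta$, and then build such an $f$ by refining the Fourier‑coefficient construction in the proof of Theorem \ref{irrational-pair}. For the reduction: suppose $f,g\in C(\mathbb T)$ satisfy $(I-T_\alpha)f=(I-T_\beta)g$ and that $(I-T_\alpha)(I-T_\beta)h=(I-T_\alpha)f$ for some measurable $h$. Then $(I-T_\alpha)[(I-T_\beta)h-f]=0$, so by ergodicity of $\theta_\alpha$ the function $(I-T_\beta)h-f$ equals a constant $c$ a.e.; hence $(I-T_\beta)h=f+c$ is integrable, and by Anosov's theorem \cite[Theorem 1]{An} (applied as in the proof of Lemma \ref{constants}) $\int_{\mathbb T}(f+c)=0$. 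Since the $f$ constructed below has mean $0$, $c=0$ and $(I-T_\beta)h=f$; thus it suffices to find $f,g\in C(\mathbb T)$ with $(I-T_\alpha)f=(I-T_\beta)g$ such that $f$ is not a measurable coboundary of $T_\beta$.

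\emph{Construction.} As in the proof of Theorem \ref{irrational-pair}, I would pick (by two‑dimensional Dirichlet) an increasing sequence $(q_k)$ of positive integers with $\max\{\|q_k\alpha\|,\|q_k\beta\|\}<q_k^{-1/2}$, but now set $\hat f_{q_k}:=b_k\|q_k\beta\|$ and $\hat f_n:=0$ for $n\notin\{q_k\}$, where $b_k$ grows geometrically with ratio $>\pi+1$, say $b_k=10^k$. Choosing $(q_k)$ inductively and sufficiently sparse — each $q_{k+1}$ huge relative to $q_k$, to $b_{k+1}$, and to the integer $n_k$ below — one secures both $\sum_k b_k\|q_k\beta\|<\infty$ and $\sum_k b_k\|q_k\alpha\|<\infty$. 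The first makes $f=\sum_k\hat f_{q_k}z^{q_k}$ continuous; defining $\hat g_{q_k}$ from (\ref{relation}) and using the computation $(*)$ in the proof of Theorem \ref{irrational-pair}, $\sum_k|\hat g_{q_k}|\le\tfrac{\pi}{2}\sum_k b_k\|q_k\alpha\|<\infty$, so $g\in C(\mathbb T)$ and $(I-T_\alpha)f=(I-T_\beta)g$.

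\emph{The key step, and the main obstacle.} It remains to show $f$ is not a measurable coboundary of $T_\beta$; this is the hard part. Let $n_k$ be the nearest integer to $(2\|q_k\beta\|)^{-1}$, so that $\|n_kq_k\beta\|$ differs from $\tfrac12$ by at most $\|q_k\beta\|$, whence $\big|\sum_{j=0}^{n_k-1}e(jq_k\beta)\big|=\tfrac{|\sin(\pi n_kq_k\beta)|}{|\sin(\pi q_k\beta)|}\ge\tfrac{1}{2\pi\|q_k\beta\|}$, while $\big|\sum_{j=0}^{n_k-1}e(jq_l\beta)\big|\le\min\{n_k,\tfrac{1}{2\|q_l\beta\|}\}$ for every $l$. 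Consequently the (continuous) function $\sum_{j=0}^{n_k-1}T_\beta^jf$ has, at every point, modulus at least $|\hat f_{q_k}|\cdot\tfrac{1}{2\pi\|q_k\beta\|}-\tfrac12\sum_{l<k}b_l-n_k\sum_{l>k}\hat f_{q_l}\ \ge\ \tfrac{b_k}{2\pi}-\tfrac12\sum_{l<k}b_l-1$, which by the geometric growth of $b_k$ (for the $l<k$ part) and the sparsity of $(q_k)$ (giving $n_k\sum_{l>k}\hat f_{q_l}<1$ for the $l>k$ part) is $\ge c_0 b_k\to\infty$ for a fixed $c_0>0$ and all large $k$. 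If $f=(I-T_\beta)h$ with $h$ measurable, then $\sum_{j=0}^{n_k-1}T_\beta^jf=h-T_\beta^{n_k}h$, so $|h(x)-h(\theta_\beta^{n_k}x)|\ge c_0 b_k$ for a.e.\ $x$; fixing $N$ with $\{\,|h|>N\,\}$ of measure $<\tfrac16$, the set $\{|h|\le N\}\cap\theta_\beta^{-n_k}\{|h|\le N\}$ has measure $\ge\tfrac23$ yet must be empty once $c_0 b_k>2N$ — a contradiction. (If real‑valued $f,g$ are wanted, apply this to complex $f$ and pass to $\mathrm{Re}(I-T_\alpha)f$ or $\mathrm{Im}(I-T_\alpha)f$, as in the proof of Theorem \ref{mpt}.) The essential obstacle is the tension here: the very bound $\|q_k\beta\|<q_k^{-1/2}$ that makes $g$ continuous limits the $q_k$‑frequency partial sum of $f$ to only $|\hat f_{q_k}|/\|q_k\beta\|=b_k$, so the weights must grow fast enough for this single term to overpower all other frequencies simultaneously (hence $b_k\equiv1$, as in Theorem \ref{irrational-pair}, does not suffice) while $\sum b_k\|q_k\beta\|$, $\sum b_k\|q_k\alpha\|$ and $n_k\sum_{l>k}\hat f_{q_l}$ all remain summable — which is exactly what dictates the careful inductive choice of a very sparse $(q_k)$.
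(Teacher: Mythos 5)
Your argument is correct, and its first half (the reduction) is exactly the paper's: if $(I-T_\alpha)(I-T_\beta)h=(I-T_\alpha)f$ with $h$ measurable, ergodicity of $T_\alpha$ plus Anosov's theorem \cite[Theorem 1]{An} and $\hat f_0=0$ force $f=(I-T_\beta)h$. Where you genuinely diverge is in how you rule out a measurable $h$ with $f=(I-T_\beta)h$. The paper keeps essentially the same $f$ as in Theorem \ref{irrational-pair} (coefficients $\hat f_{q_k}=\|q_k\beta\|$), merely passes to a lacunary subsequence $(q_k)$, and invokes Herman's theorem \cite{He}: a measurable solution of $(I-T_\beta)h=f$ with $f$ continuous and lacunary must lie in $L_2(\mathbb T)$, which then contradicts the estimate $(**)$ of Theorem \ref{irrational-pair} via Riemann--Lebesgue. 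You instead change the construction — coefficients $b_k\|q_k\beta\|$ with $b_k$ growing geometrically with ratio $>\pi+1$, and an inductively chosen, very sparse $(q_k)$ so that $\sum_k b_k\|q_k\alpha\|$, $\sum_k b_k\|q_k\beta\|$ and the tail terms $n_k\sum_{l>k}b_l\|q_l\beta\|$ are all controlled — and then show directly that the Birkhoff sums $\sum_{j=0}^{n_k-1}T_\beta^j f$ at the rigidity times $n_k\approx(2\|q_k\beta\|)^{-1}$ have modulus $\ge c_0b_k\to\infty$ at every point, which is incompatible with the telescoping identity $\sum_{j=0}^{n_k-1}T_\beta^jf=h-T_\beta^{n_k}h$ for measurable $h$ by the standard two-sets measure argument. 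I checked your Dirichlet-kernel estimates and the inductive sparsity requirements; they are consistent (the only implicit hypothesis is $\|q_k\beta\|$ small enough that $\cos(\pi\|q_k\beta\|)\ge\tfrac12$, automatic for large $q_k$). The trade-off: the paper's route is shorter but leans on Herman's nontrivial $L_2$-regularity theorem and on the computation already done in Theorem \ref{irrational-pair}; yours is self-contained and elementary, at the price of a more delicate construction, and it proves the non-existence of a measurable transfer function directly rather than by first upgrading it to $L_2$.
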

\begin{proof} In the construction of $f$ and $g$ in the proof of Theorem \ref{irrational-pair}, 
we can assume, by taking a subsequence, that $(q_k)$ is lacunary: for some $Q>1$, we have
 $ q_{k+1}/q_k \ge Q$ for every $k$. Then $f$ and $g$ are continuous with lacunary Fourier 
series, and  the same holds for $(I-T_\alpha)f$.  Without loss of generality, 
we may assume $\int_{\mathbb T} f =0$.

If $h$ is measurable with $(I-T_\alpha)(I-T_\beta)h=(I-T_\alpha)f$, then 
$ (I-T_\alpha)[f-(I-T_\beta)h]=0$, and ergodicity of $T_\alpha$ implies that 
$f=(I-T_\beta)h +const$. Hence $(I-T_\beta)h$ is continuous, and by Anosov \cite[Theorem 1]{An}
$\int_{\mathbb T} (I-T_\beta)h = 0$. Since $\int_{\mathbb T}f=0$, the constant is zero, 
and $f=(I-T_\beta)h$. But since $f$ is continuous with lacunary Fourier series, Herman's 
theorem \cite{He} says that $h \in L_2(\mathbb T)$,  contradicting Theorem \ref{irrational-pair}.
\end{proof}

J.P. Conze has noted that the work of Conze and Marco \cite{CM} yields an interesting result,
% on $L_2$ joint coboundaries, similar to 
in the spirit of Corollary \ref{no-measurable}, with very simple $L_2$ joint coboundaries.
%special case of Corollary \ref{no-measurable} (with $L_2$ functions instead of continuous ones).

\begin{prop} \label{conze}
For any irrational $\alpha \in (0,1)$ with unbounded quotients there exist uncountably 
many pairs $(\beta,\gamma)$ of irrational numbers in $(0,1)$, such that $\{1,\gamma,\alpha\}$ 
are linearly independent over $\mathbb Q$, and $(I-T_\beta)1_{[0,\gamma]}$ is an $L_2$ joint 
coboundary of $T_\alpha$ and $T_\beta$ which is not a measurable double coboundary.
\end{prop}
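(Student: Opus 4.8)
The plan is to check that $\psi:=(I-T_\beta)1_{[0,\gamma]}$ has the two asserted properties once $(\beta,\gamma)$ is chosen suitably, and to reduce the whole existence statement to a Diophantine construction — the one carried out by Conze and Marco in \cite{CM} — whose analytic core I expect to be the only real difficulty. The easy half is that $\psi$ is automatically a coboundary of $T_\beta$ in $L_2(\mathbb T)$, with transfer function $1_{[0,\gamma]}\in L_2(\mathbb T)$, and that $\psi\neq0$ because $F(T_\beta)$ consists of the constants while $1_{[0,\gamma]}$ is non-constant for $\gamma\in(0,1)$. What has to be arranged is that $\psi\in(I-T_\alpha)L_2$. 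Since $T_\alpha$ is unitary on the complex $L_2(\mathbb T)$ with $T_\alpha e_n=e(n\alpha)e_n$ for $e_n(x)=e(nx)$, the spectral measure of $f=\sum_n\hat f(n)e_n$ with respect to $T_\alpha$ is $\sigma_f=\sum_n|\hat f(n)|^2\,\delta_{e(n\alpha)}$, and $\widehat\psi(n)=(1-e(n\beta))\,\widehat{1_{[0,\gamma]}}(n)$ with $\widehat\psi(0)=0$; so by Proposition \ref{spectral-cob}(i) (equivalently, Browder's theorem \cite{Br}) and $|\widehat{1_{[0,\gamma]}}(n)|=|\sin(\pi n\gamma)|/(\pi|n|)$,
\[
\psi\in(I-T_\alpha)L_2 \iff \sum_{n\neq0}\frac{|1-e(n\beta)|^2\,|\widehat{1_{[0,\gamma]}}(n)|^2}{|1-e(n\alpha)|^2}<\infty \iff \sum_{n\geq1}\frac{\|n\beta\|^2\,\|n\gamma\|^2}{n^2\,\|n\alpha\|^2}<\infty .
\]

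Next I would reduce ``not a measurable double coboundary'' to a statement about $\alpha$ and $\gamma$ alone. Suppose $\psi=(I-T_\alpha)(I-T_\beta)h$ for some measurable $h$. Since $I-T_\alpha$ and $I-T_\beta$ commute, $(I-T_\beta)\big[(I-T_\alpha)h-1_{[0,\gamma]}\big]=0$; $T_\beta$ is ergodic, so $(I-T_\alpha)h-1_{[0,\gamma]}$ is a constant $c$. Then $(I-T_\alpha)h=1_{[0,\gamma]}+c$ is bounded, hence integrable, so $\int_{\mathbb T}(I-T_\alpha)h=0$ by Anosov \cite[Theorem 1]{An}, which forces $c=-\gamma$. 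Thus $1_{[0,\gamma]}-\gamma$ would be a measurable coboundary of $T_\alpha$ — exactly as in the argument behind Lemma \ref{constants} and Corollary \ref{no-measurable}. But $1_{[0,\gamma]}-\gamma$ is not a measurable $T_\alpha$-coboundary when $\gamma\notin\mathbb Z\alpha+\mathbb Z$ (the cylinder flow over $\theta_\alpha$ with cocycle $1_{[0,\gamma]}-\gamma$ is then ergodic; this is classical, see \cite{CM} and the references there), and $\gamma\notin\mathbb Z\alpha+\mathbb Z$ follows from the rational independence of $\{1,\gamma,\alpha\}$. This contradiction shows $\psi$ is not a measurable double coboundary, a fortiori not a double coboundary in $L_1(\mathbb T)$. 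Note that this step uses only $\gamma\notin\mathbb Z\alpha+\mathbb Z$ and $\beta$ irrational; moreover the same non-triviality of the cocycle forces $\sum_{n\geq1}\|n\gamma\|^2/(n^2\|n\alpha\|^2)=\infty$, so the damping factor $\|n\beta\|^2$ in the series above must genuinely contribute.

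It then remains to exhibit uncountably many pairs of irrationals $(\beta,\gamma)\in(0,1)^2$ with $\{1,\gamma,\alpha\}$ rationally independent for which $\sum_{n\geq1}\|n\beta\|^2\|n\gamma\|^2/(n^2\|n\alpha\|^2)<\infty$; this is the Diophantine heart of the matter, the step I expect to be the main obstacle, and the only place where the hypothesis that $\alpha$ has unbounded partial quotients enters. The idea is a nested-interval (Cantor) construction synchronised with the convergents $p_k/q_k$ of $\alpha$: one selects $\beta$ and $\gamma$ so that at the scales $n\asymp q_k$, where $1/\|n\alpha\|^2$ is of order $q_{k+1}^2\asymp a_{k+1}^2 q_k^2$ and so abnormally large, at least one of $\|n\beta\|,\|n\gamma\|$ is correspondingly small — which is possible precisely because a large partial quotient $a_{k+1}$ leaves a whole sub-interval, of relative length $\asymp 1/a_{k+1}$, in which to place the next stage of the construction — while at all other $n$ one merely uses $\|n\beta\|^2\|n\gamma\|^2\leq\tfrac1{16}$ together with the generic smallness of $1/(n^2\|n\alpha\|^2)$. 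Each stage of the construction leaves an interval of admissible continuations, so the family of pairs produced is uncountable; the countably many rational relations among $1,\gamma,\alpha$ each rule out only a meager, Lebesgue-null set of values of $\gamma$, so uncountably many of the pairs also satisfy the independence requirement. The details of this construction are carried out in \cite{CM}.

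Finally, as in the last paragraph of the proof of Theorem \ref{mpt}, the above produces $\psi$ in the complex $L_2(\mathbb T)$, and replacing $\psi$ by its real or imaginary part if necessary gives the asserted joint coboundary inside the real $L_2(\mathbb T)$, since each of $\operatorname{Re}\psi,\operatorname{Im}\psi$ is again of the form $(I-T_\beta)1_{[0,\gamma]}$ up to a real scalar and the obstruction to being a measurable double coboundary is inherited.
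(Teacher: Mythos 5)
Your proposal is correct and follows essentially the same route as the paper: both defer to the Conze--Marco construction for the uncountable family of pairs $(\beta,\gamma)$, reduce ``not a measurable double coboundary'' via ergodicity and Anosov's theorem to showing that $1_{[0,\gamma]}-\gamma$ is not a measurable $T_\alpha$-coboundary, and conclude from ergodicity of the cylinder flow over $\theta_\alpha$ (Oren). The only cosmetic differences are your Fourier-series reformulation of the condition $(I-T_\beta)1_{[0,\gamma]}\in(I-T_\alpha)L_2$ and your use of the elementary invariant-function argument where the paper invokes Schmidt's theory of essential values.
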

\begin{proof}
By \cite[Theorem 2.2]{CM}, if $\alpha$ has unbounded partial quotients, then there is an 
uncountable set of pairs of irrational numbers $\beta$ and $\gamma$ in $(0, 1)$ such that 
$\varphi_{\beta,\gamma}:= 1_{[0, \gamma]} - T_\beta 1_{[0, \gamma]}\in (I-T_\alpha)L_2$,
i.e. $\varphi_{\beta,\gamma} = (I- T_\alpha) \psi$ with $\psi=\psi_{\beta,\gamma}$ in $L_2$. 
Hence $f = 1_{[0, \gamma]} - \gamma$  satisfies $(I- T_\beta) f =(I-T_\alpha)\psi$. 
Since the number of pairs $(\beta,\gamma)$ is uncountable, there is an uncountable subset of 
pairs with $\{1,\alpha,\gamma\}$ linearly independent over the rationals. For such pairs,
by Oren \cite{Or}, the skew product $T_f(x,y)=(x+\alpha,y + f(x))$ on $\mathbb T \times \mathbb R$ 
is ergodic.  We prove that there is no measurable $h$ such that 
$(I-T_\alpha) (I- T_\beta) h = (I-T_\beta)f$;
indeed, if such $h$ existed, we would have $f = (I- T_\alpha) h$  (shown similarly to the proof 
of Corollary \ref{no-measurable}).  Then the set $\bar E(f)$ of essential values of $f$ is 
$\{0\}$ \cite[Theorem 3.9(4)]{Sc}. But ergodicity of $T_f$ yields that 
$\mathbb R \subset \bar E(f)$ \cite[Corollary 5.4]{Sc} -- a contradiction.  
\end{proof}
% But the cocycle defined by $f= 1_{[0, \gamma]} - \gamma$
%is known to be ergodic over any irrational rotation (hence the rotation by $\alpha$), 
%which is a stronger property than to be not a measurable coboundary.

{\bf Remarks.} 1. In the proof it is shown that {\it if $\alpha,\gamma \in(0,1)$ with 
$\{1,\alpha,\gamma\}$ linearly independent over $\mathbb Q$, then 
$f= 1_{[0,\gamma]} -\gamma$ is not a measurable coboundary of $T_\alpha$.} 
Petersen's result \cite{Pet} shows only $f \notin (I-T_\alpha)L_2$.
\smallskip

2. An irrational $\alpha $ has unbounded partial quotients if and only if
$\liminf_n n\|n\alpha\|=0$ (e.g. \cite[Section 11.10]{HW}). The set of such $\alpha$ in 
$[0,1]$ has  Lebesgue measure 1, as its complement in $[0,1]$ 
%consists precisely of badly approximable numbers, which have 
has measure zero \cite[Theorem 196]{HW}.
Thus Proposition \ref{conze} applies to almost every (irrational) $\alpha \in (0,1)$.
\medskip

The following result concerning measurable joint coboundaries is a special case of a result of
Conze and Marco \cite[Proposition 1.5]{CM}. An example is given in \cite[Theorem 2.1]{CM}.

\begin{prop} \label{cm}
Let $u$ be a measurable function on $\mathbb T$ and $\alpha \in (0,1)$ irrational. If the set of 
$\beta \in (0,1)$, for which $(I-T_\beta)u =(I-T_\alpha)v_\beta$ for some measurable $v_\beta$,
has positive measure, then there exist a measurable $h$ and a constant $C$ such that 
$u=(I-T_\alpha)h +C$. If $u$ is integrable, then $C =\int_{\mathbb T}u$.
\end{prop}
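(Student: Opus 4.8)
The plan is to show that the positive-measure hypothesis forces $(I-T_\gamma)u$ to be a measurable $T_\alpha$-coboundary for \emph{every} real $\gamma$, then to splice the solutions of all these equations into a single measurable cocycle over the rotation flow on $\mathbb T$, and finally to exploit the rigidity of such cocycles. For the first point, let $G:=\{\gamma\in\mathbb R:\ (I-T_\gamma)u=(I-T_\alpha)v \text{ for some measurable }v\}$. The identity $(I-T_{\gamma_1+\gamma_2})=(I-T_{\gamma_1})+T_{\gamma_1}(I-T_{\gamma_2})$ together with the commutativity of rotations shows that $G$ is a subgroup of $(\mathbb R,+)$ — for instance $(I-T_{\gamma_1+\gamma_2})u=(I-T_\alpha)(v_{\gamma_1}+T_{\gamma_1}v_{\gamma_2})$ and $(I-T_{-\gamma})u=(I-T_\alpha)(-T_{-\gamma}v_\gamma)$. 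By hypothesis $G$ has positive Lebesgue measure (it is in any case analytic, being the preimage of the analytic set $(I-T_\alpha)L_0(\mathbb T)$ under the measure-continuous map $\gamma\mapsto(I-T_\gamma)u$, where $L_0$ denotes measurable functions modulo null sets with convergence in measure), so by Steinhaus' theorem $G\supseteq G-G$ contains an interval about $0$; but a subgroup of $\mathbb R$ containing an interval equals $\mathbb R$. Hence $(I-T_\gamma)u$ is a measurable $T_\alpha$-coboundary for all $\gamma$.

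Next I would assemble a cocycle. For each $\gamma$ pick a measurable $v_\gamma$ with $(I-T_\alpha)v_\gamma=(I-T_\gamma)u$; since $\alpha$ is irrational, $T_\alpha$ is ergodic and $v_\gamma$ is unique up to an additive constant, so a standard measurable-selection (Jankov--von Neumann) argument lets one choose $(\gamma,x)\mapsto v_\gamma(x)$ jointly measurable with $v_0=0$. Comparing the two solutions $v_{\gamma_1+\gamma_2}$ and $v_{\gamma_1}+T_{\gamma_1}v_{\gamma_2}$ of the equation indexed by $\gamma_1+\gamma_2$ produces a constant $d(\gamma_1,\gamma_2)$ equal to their difference, and associativity makes $d$ a measurable real-valued $2$-cocycle on $\mathbb R$; such cocycles are coboundaries, so $d(\gamma_1,\gamma_2)=e(\gamma_1)+e(\gamma_2)-e(\gamma_1+\gamma_2)$ for some measurable $e$ with $e(0)=0$. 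Setting $v'_\gamma:=v_\gamma-e(\gamma)$ preserves $(I-T_\alpha)v'_\gamma=(I-T_\gamma)u$ and joint measurability and now gives the genuine cocycle identity $v'_{\gamma_1+\gamma_2}=v'_{\gamma_1}+T_{\gamma_1}v'_{\gamma_2}$ over the flow $\gamma\mapsto T_\gamma$ on $\mathbb T$. Because $(I-T_\alpha)v'_1=(I-T_1)u=0$, the constant $b:=v'_1$ is well defined, $v'_{\gamma+1}=v'_\gamma+b$, and therefore $w_\gamma:=v'_\gamma-b\gamma$ is a jointly measurable cocycle which is in addition $1$-periodic in $\gamma$.

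To finish, put $g(x):=w_{-x}(x)$, which is well defined on $\mathbb T$ by the $1$-periodicity and measurable by joint measurability. Using the cocycle relation $w_{a+b}(y)=w_a(y)+w_b(y+a)$ once with $(a,b,y)=(-\gamma,-x,x+\gamma)$ and once with $(a,b,y)=(\gamma,-\gamma,x)$, together with $w_0=0$, one gets $g(x)-g(x+\gamma)=w_\gamma(x)$ for every $\gamma$, i.e. $v'_\gamma=b\gamma+g-T_\gamma g$. Feeding this back into $(I-T_\alpha)v'_\gamma=(I-T_\gamma)u$, and using $(I-T_\alpha)(b\gamma)=0$ and $T_\alpha T_\gamma=T_\gamma T_\alpha$, yields $(I-T_\gamma)\big[u-(I-T_\alpha)g\big]=0$ for \emph{all} $\gamma\in\mathbb R$; hence $u-(I-T_\alpha)g$ is invariant under every rotation and so constant, say $=C$, giving $u=(I-T_\alpha)g+C$ with $g$ measurable. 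If moreover $u\in L_1$, then $(I-T_\alpha)g=u-C\in L_1$, so by Anosov's theorem \cite{An} $\int_{\mathbb T}(I-T_\alpha)g=0$ and $C=\int_{\mathbb T}u$.

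The main obstacle is the middle step, turning the separate coboundary equations into one jointly measurable cocycle: this needs both a measurable choice of the $v_\gamma$ (possible because they are unique up to constants) and the vanishing of the obstruction $d$. It is essential here that one works with the whole flow $\{T_\gamma\}_{\gamma\in\mathbb R}$ rather than with $T_\alpha$ alone — a cocycle over the single rotation $T_\alpha$ need not be trivializable, which is exactly the phenomenon behind Theorem \ref{irrational-pair}, whereas a cocycle over the rotation flow on $\mathbb T$, whose base is a compact group rotation, is forced to be a coboundary once its linear part $\gamma\mapsto b\gamma$ is stripped off, and that is the content of the last step.
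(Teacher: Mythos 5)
The paper itself offers no proof of this proposition; it is quoted as a special case of Conze and Marco \cite[Proposition 1.5]{CM}, so there is no in-paper argument to compare with, and your proposal has to stand on its own. Its outer layers do: the set $G$ of good $\gamma$ is a group, the hypothesis plus Steinhaus gives $G=\mathbb R$, and the endgame is sound — once one knows $v'_\gamma=b\gamma+(I-T_\gamma)g$ for a set of $\gamma$ of full measure, choosing one good irrational $\gamma$ gives $(I-T_\gamma)\bigl[u-(I-T_\alpha)g\bigr]=0$ and ergodicity of that single rotation makes $u-(I-T_\alpha)g$ constant; Anosov's theorem \cite{An} then identifies the constant when $u\in L_1$, in line with how the paper uses it in Lemma \ref{constants}.

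The genuine gap is in the middle step, exactly where you locate the main difficulty and then pass over it. First, the assertion that every measurable $2$-cocycle $d:\mathbb R^2\to\mathbb R$ (trivial action) is a coboundary is true, but it is a theorem (Moore's measurable group cohomology, or a Mackey--Weil argument identifying the associated central extension of $\mathbb R$ by $\mathbb R$ with $\mathbb R^2$); it cannot simply be asserted. Second, and more seriously, the trivialization $g(x):=w_{-x}(x)$ is not legitimate as written. Each $w_\gamma$ is only an equivalence class modulo null sets, and a jointly measurable version $W(\gamma,x)$ is determined only up to a product-null set; the antidiagonal $\{x\equiv-\gamma\}$ is product-null, so $g$ depends on the arbitrary choice of representative — adding to $W$ any measurable function supported on the antidiagonal leaves every class $w_\gamma$ unchanged but changes $g$ arbitrarily. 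Likewise the cocycle identity $w_{a+b}(y)=w_a(y)+w_b(y+a)$ is known, for each fixed pair $(a,b)$, only for a.e.\ $y$, whereas you evaluate it along the two-parameter family $(a,b,y)=(-\gamma,-x,x+\gamma)$, a null subset of the three-dimensional parameter space; hence the identity $g-T_\gamma g=w_\gamma$ ``for every $\gamma$'' does not follow from what has been established. The step can be repaired — replace $W$ by a strict cocycle (Ramsay's theorem that an a.e.\ cocycle of a locally compact second countable group action agrees a.e.\ with a strict one) and then your pointwise computation is valid, or invoke Mackey's classification of measurable cocycles over the transitive $\mathbb R$-action on $\mathbb T=\mathbb R/\mathbb Z$ (up to coboundaries they come from homomorphisms of the stabilizer $\mathbb Z$) — but either repair yields $w_\gamma=g-T_\gamma g$ only for a.e.\ $\gamma$, which, as noted above, is still sufficient. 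So the strategy is salvageable, but as written the crucial step rests on unproved (and, in the case of the explicit formula for $g$, incorrectly applied) machinery rather than on a proof.
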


Let $u$ and $\alpha$ be as in Proposition \ref{cm}. Then $u-C$ satisfies the same assumptions, 
so we may assume $C=0$ (even without integrability of $u$). For each irrational $\beta$ as in 
the proposition we then have that the joint coboundary $(I-T_\beta)u= (I-T_\alpha)v_\beta$ is 
a (measurable) double coboundary: 
$(I-T_\beta)u=(I-T_\beta)(I-T_\alpha)h$; if $h$ is non-integrable, then any $w$ satisfying 
$(I-T_\alpha)(I-T_\beta)w= (I-T_\beta)u$ is non-integrable, by Lemma \ref{constants}.

\begin{prop} \label{ks}
Let $\phi \in C(\mathbb T)$ with $\int_{\mathbb T} \phi=0$ have Fourier coefficients satisfying 
$\sum_k |\hat \phi_k| < \infty$ and $\sum_{|k|>0} |\hat\phi_k| \log(1/|\hat\phi_k|) < \infty$. 
Then $\phi \in (I-T_\alpha)C(\mathbb T)$  for almost every $\alpha$. 
Hence for almost every pair $(\alpha,\beta)$, there exist $f,g \in C(\mathbb T)$ such that 
$(I-T_\alpha)f= \phi =(I-T_\beta)g$ ($\phi$ is a joint coboundary in $C(\mathbb T)$).
\end{prop}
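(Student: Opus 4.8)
The plan is to produce the continuous solution of $(I-T_\alpha)f=\phi$ directly from its Fourier coefficients and to check, for almost every $\alpha$, that the resulting series is absolutely convergent. One sets $\hat f_0:=0$ and $\hat f_k:=\hat\phi_k/(1-e(k\alpha))$ for $k\ne0$; since $\hat\phi_0=\int_{\mathbb T}\phi=0$, any $f$ with these Fourier coefficients satisfies $(I-T_\alpha)f=\phi$ term by term, and such an $f$ lies in $C(\mathbb T)$ as soon as $\sum_k|\hat f_k|<\infty$, because an absolutely convergent Fourier series defines a continuous function. Using $|1-e(k\alpha)|=2\sin(\pi\|k\alpha\|)\ge 4\|k\alpha\|$, it therefore suffices to prove that
$$
\sum_{k\ne0}\frac{|\hat\phi_k|}{\|k\alpha\|}<\infty\qquad\text{for Lebesgue-a.e. }\alpha\in[0,1).
$$

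To establish this I would split each summand at the level $\|k\alpha\|=|\hat\phi_k|$ and bound the two resulting sums by the two hypotheses on $(\hat\phi_k)$, respectively. Since $k\alpha\bmod1$ is equidistributed in $[0,1)$ as $\alpha$ ranges over $[0,1)$, one has $\int_0^1 h(\|k\alpha\|)\,d\alpha=2\int_0^{1/2}h(t)\,dt$ for every $k\ne0$, so the "large denominator" part integrates to
$$
\int_0^1\frac{|\hat\phi_k|}{\|k\alpha\|}\,\mathbf{1}\{\|k\alpha\|\ge|\hat\phi_k|\}\,d\alpha
=2|\hat\phi_k|\log\frac1{2|\hat\phi_k|}\ \le\ 2|\hat\phi_k|\log\frac1{|\hat\phi_k|}
$$
(and equals $0$ once $|\hat\phi_k|\ge\tfrac12$); summing over $k$ and invoking $\sum_{k\ne0}|\hat\phi_k|\log(1/|\hat\phi_k|)<\infty$ makes the large-denominator part integrable in $\alpha$, hence finite for a.e. $\alpha$. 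For the "small denominator" part, the sets $B_k:=\{\alpha:\|k\alpha\|<|\hat\phi_k|\}$ have Lebesgue measure $\min(2|\hat\phi_k|,1)\le 2|\hat\phi_k|$, so $\sum_k\lambda(B_k)<\infty$; by Borel--Cantelli, almost every $\alpha$ lies in only finitely many $B_k$, and for such an $\alpha$ (which may be taken irrational, so $\|k\alpha\|>0$ for all $k\ne0$) the small-denominator sum has only finitely many nonzero, finite terms. Adding the two parts gives the displayed convergence, so $\phi\in(I-T_\alpha)C(\mathbb T)$ for a.e. $\alpha$.

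For the last assertion, let $G\subset[0,1)$ be the full-measure set of $\alpha$ for which $\phi\in(I-T_\alpha)C(\mathbb T)$. Then $G\times G$ has full measure in $[0,1)^2$, and for every $(\alpha,\beta)\in G\times G$ there exist $f,g\in C(\mathbb T)$ with $(I-T_\alpha)f=\phi=(I-T_\beta)g$; thus $\phi$ is a joint coboundary of $T_\alpha$ and $T_\beta$ in $C(\mathbb T)$.

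The only genuinely delicate point I expect is the choice of splitting level $|\hat\phi_k|$: it is precisely this choice that forces the large-denominator error to be controlled by $\sum|\hat\phi_k|\log(1/|\hat\phi_k|)$ — which cannot be avoided, since $\int_0^1 d\alpha/\|k\alpha\|$ diverges logarithmically — while the small-denominator error is then exactly the Borel--Cantelli mass $\sum|\hat\phi_k|$. Everything else (the inequality $|1-e(k\alpha)|\ge 4\|k\alpha\|$, the equidistribution of $k\alpha\bmod1$, the identity $\int_0^1 h(\|k\alpha\|)\,d\alpha=2\int_0^{1/2}h(t)\,dt$, and the interchange of sum and integral via Tonelli) is routine.
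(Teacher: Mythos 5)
Your proof is correct, and its overall strategy coincides with the paper's: define the candidate solution through its Fourier coefficients $\hat f_k=\hat\phi_k/(1-\e^{2\pi ik\alpha})$ and show that for almost every $\alpha$ these are absolutely summable, which forces $f\in C(\mathbb T)$ and $(I-T_\alpha)f=\phi$; the product statement for pairs $(\alpha,\beta)$ then follows from Fubini exactly as you say. The one difference is how the key analytic input is obtained: the paper simply quotes the Kac--Salem theorem on cosecant series, which states that under the hypotheses $\sum_k|\hat\phi_k|<\infty$ and $\sum_{|k|>0}|\hat\phi_k|\log(1/|\hat\phi_k|)<\infty$ the series $\sum_{|k|\ge1}|\hat\phi_k|/|\sin(\pi k\alpha)|$ converges a.e., whereas you prove the needed a.e.\ finiteness of $\sum_{k\ne0}|\hat\phi_k|/\|k\alpha\|$ from scratch, splitting at the level $\|k\alpha\|=|\hat\phi_k|$, controlling the large-denominator part in $L^1(d\alpha)$ by $\sum|\hat\phi_k|\log(1/|\hat\phi_k|)$ via Tonelli, and disposing of the small-denominator part by Borel--Cantelli using $\sum|\hat\phi_k|<\infty$. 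This makes your argument self-contained (in effect you reprove the relevant case of the Kac--Salem lemma by its standard first-moment argument), at the cost of a somewhat longer write-up; the paper's version buys brevity by outsourcing precisely this step to \cite{KS}. All the individual estimates you use ($|1-\e^{2\pi ik\alpha}|=2\sin(\pi\|k\alpha\|)\ge4\|k\alpha\|$, the distribution of $\|k\alpha\|$ under Lebesgue measure, the measure bound for $B_k$, and the treatment of the finitely many $k$ with $|\hat\phi_k|\ge\tfrac12$) check out.
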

\begin{proof}
By Kac and Salem \cite{KS}, the series 
\begin{equation} \label{ks-series}
\sum_{|k|=1}^\infty \frac {|\hat\phi_k|}{|\sin(\pi kx)|} 
\end{equation}
converges a.e.  For $x=\alpha$ for which the series converges,
define $c_0=0$ and $c_k:= \hat\phi_k /2\sin(\pi k\alpha)$ for $|k| >0$. 
Then  $\sum_k |c_k| <\infty$, and the function $f(z) :=\sum_k c_k z^k$ is in 
$C(\mathbb T)$, and satisfies $(I-T_\alpha)f=\phi$.
\end{proof}

{\bf Remarks.} 1. Proposition \ref{ks} applies when 
$|\hat\phi_k| = O(1/|k|(\log|k|)^{2+\epsilon})$. 
This improves a result of Herman \cite[p. 230, Proposition 8.2.1]{He0}.

%2. If $|\hat\phi_k|$ and $|\hat\phi_{-k}|$ ($k>0$)  are non-increasing, then by 
2. It can be shown that the conditions $\hat \phi_0=0$ and
 $\sum_{|k|>0}|\hat\phi_k| \log|k| < \infty$ imply that the conditions of Proposition 
\ref{ks} hold, so $\phi \in (I-T_\alpha)C(\mathbb T)$ for almost every $\alpha$.
Muromski\v\i \ \cite{Mu} proved  a.e. convergence of \eqref{ks-series} under 
these conditions.
%(with $\alpha=1$), 
 
\medskip

{\bf Definition.} A (necessarily irrational) real number $\alpha$ is said to be {\it badly 
approximable} ({\it bad}  for short) if there exists $c>0$ such that
$$
\|q\alpha\| := \min\{|q\alpha -p|: p\in \mathbb Z\} > \frac{c}q \qquad \forall q \in \mathbb N.
$$
The set of badly approximable numbers is known to have Lebesgue measure 0 and Hausdorff dimension 1.
Rozhdestvenski\v\i\  \cite{Ro} constructed mean zero $L_2$ functions such that whenver
$\alpha$ is bad, there is no measurable $h$ satisfying $(I-T_\alpha)h= f$.

\begin{lem} \label{bad-joint}
Let the Fourier coefficients of $f$ satisfy $\hat f_0=0$.

(i) If  $\sum_{k \ne 0}|k|\cdot |\hat f_k|<\infty$,
%$|\hat f_k|=O(1/|k|^2(\log |k|)^\gamma)$, $\gamma >1$, for $k \ne 0$. 
then $f \in (I-T_\alpha)C(\mathbb T)$ for every badly approximable $\alpha$.

(ii) If  $\sum_{k \ne 0}|k|^2 |\hat f_k|^2 <\infty$,
then $f \in C(\mathbb T) \cap (I-T_\alpha)L_2(\mathbb T)$ for every badly approximable $\alpha$.
\end{lem}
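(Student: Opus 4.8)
The plan is to solve the cohomological equation $(I-T_\alpha)h=f$ explicitly on the Fourier side and then read off the regularity of $h$ from the hypotheses on $f$. Writing $f(z)=\sum_k \hat f_k z^k$, the $k$-th Fourier coefficient of $(I-T_\alpha)h$ is $(1-e(k\alpha))\hat h_k$, so the equation forces $\hat h_0$ arbitrary (take $\hat h_0=0$) and $\hat h_k=\hat f_k/(1-e(k\alpha))$ for $k\ne 0$. The whole content of the lemma is then to show that, when $\alpha$ is badly approximable, the hypotheses on $(\hat f_k)$ make $(\hat h_k)$ good enough to define a function in $C(\mathbb T)$ (part (i)) or in $L_2(\mathbb T)$ (part (ii)).

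The key estimate is the Diophantine lower bound for the denominators. As already used at step $(*)$ in the proof of Theorem~\ref{irrational-pair}, $|1-e(k\alpha)|=2|\sin(\pi\|k\alpha\|)|\ge 4\|k\alpha\|$, since $0\le\|k\alpha\|\le\frac12$ and $\sin(\pi x)\ge 2x$ on $[0,\frac12]$. If $\alpha$ is badly approximable with constant $c>0$, then $\|k\alpha\|=\||k|\alpha\|>c/|k|$ for every $k\ne 0$, whence $|1-e(k\alpha)|\ge 4c/|k|$ and therefore $|\hat h_k|\le \frac{1}{4c}\,|k|\,|\hat f_k|$ for all $k\ne 0$. (If $f$ is real-valued the conjugacy relation $\hat f_{-k}=\overline{\hat f_k}$ is preserved, so $h$ comes out real-valued as well.)

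Given this bound, part (i) follows because $\sum_k|\hat h_k|\le \frac{1}{4c}\sum_{k\ne 0}|k|\,|\hat f_k|<\infty$, so $h(z):=\sum_k\hat h_k z^k$ converges absolutely and uniformly to a continuous function, which satisfies $(I-T_\alpha)h=f$ by comparing Fourier coefficients; note also $\sum_k|\hat f_k|<\infty$, so $f\in C(\mathbb T)$ automatically. For part (ii), the bound gives $\sum_k|\hat h_k|^2\le \frac{1}{16c^2}\sum_{k\ne 0}|k|^2|\hat f_k|^2<\infty$, so $h\in L_2(\mathbb T)$ with $(I-T_\alpha)h=f$ (equality of $L_2$ functions with equal Fourier coefficients); and by Cauchy--Schwarz, $\sum_{k\ne 0}|\hat f_k|\le\big(\sum_{k\ne 0}k^{-2}\big)^{1/2}\big(\sum_{k\ne 0}|k|^2|\hat f_k|^2\big)^{1/2}<\infty$, which again puts $f$ in $C(\mathbb T)$.

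There is no serious obstacle here: once the Diophantine inequality is in place the rest is routine summation. The only points requiring a little care are checking that the formally defined $h$ genuinely solves the equation in the relevant space (uniform convergence in case (i), the $L_2$ identification in case (ii)), and remembering to verify that $f$ itself lies in $C(\mathbb T)$ in both cases --- immediate from $\sum|\hat f_k|<\infty$, which in (ii) needs the Cauchy--Schwarz step above.
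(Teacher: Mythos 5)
Your proposal is correct and follows essentially the same route as the paper: solve $(I-T_\alpha)h=f$ coefficientwise with $\hat h_k=\hat f_k/(1-\e^{2\pi i k\alpha})$, bound the denominator via $|1-\e^{2\pi i k\alpha}|=2\sin(\pi\|k\alpha\|)\ge 4\|k\alpha\|\ge 4c/|k|$ using bad approximability, and sum (absolutely for (i), in square mean for (ii), with Cauchy--Schwarz giving $f\in C(\mathbb T)$ in (ii)). This matches the paper's proof, only written out in slightly more detail.
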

\begin{proof} We prove (i):
$$
2 \sum_{|k|=1}^\infty \frac {|\hat f_k|}{|1-\e^{2\pi ik\alpha}|} =
\sum_{|k|=1}^\infty \frac {|\hat f_k|}{|\sin(\pi k\alpha)|} \le
\sum_{|k|=1}^\infty \frac {|\hat f_k|}{2\| k\alpha\|} \le
\sum_{|k|=1}^\infty \frac {|k|}{2c}|\hat f_k| < \infty.
%\le \frac C{2c}\sum_{|k|=1}^\infty \frac1{|k| (\log |k|)^{\gamma}} < \infty.
$$
Hence the function $g(z)=\sum_{|k|=1}^\infty \frac{\hat f_k}{1-\e^{2\pi ik\alpha}} z^k$ is in
%Hence the function $g(z)=\sum_{|k|=1}^\infty \frac{\hat f_k}{2\sin(\pi k\alpha)} z^k$ is in
$C(\mathbb T)$ and satisfies $(I-T_\alpha)g=f$.

(ii) By Cauchy-Schwarz, $\sum_k |\hat f_k|<\infty$. A computation similar to (i)
yields that the above $g$ is in $L_2(\mathbb T)$.
\end{proof}

%\begin{prop} 
%Let $\phi \in C(\mathbb T)$ with $\int_{\mathbb T} \phi=0$ have Fourier coefficients satisfying 
%$|\hat\phi(k)| = O(1/|k|(\log|k|)^{3+\epsilon})$. 
%Then for almost every pair $(\alpha,\beta)$, 
%there exist $f,g \in C(\mathbb T)$ such that $(I-T_\alpha)f= \phi =(I-T_\beta)g$
%($\phi$ is a joint coboundary in $C(\mathbb T)$).
%\end{prop}

\begin{prop} \label{mur}
Let $a_k \downarrow 0$ 
%or $ka_k>0$ non-increasing 
satisfy $\sum_{k=1}^\infty k a_k^2< \infty$. 
If the Fourier coefficients of $f \in L_2(\mathbb T)$ satisfy $\hat f_0=0$ and 
$|\hat f_{|k|}|=O(a_{|k|})$ for $k\ne 0$, then $f \in (I-T_\alpha)L_2 $ for every badly approximable  
$\alpha$.
\end{prop}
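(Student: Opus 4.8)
The plan is to write down the only possible solution and verify it lies in $L_2(\T)$. Since $\hat f_0=0$, set $g(z):=\sum_{k\ne 0}\frac{\hat f_k}{1-e(k\alpha)}\,z^k$; formally $(I-T_\alpha)g=f$, so the whole statement reduces to the convergence $\sum_{k\ne 0}\frac{|\hat f_k|^2}{|1-e(k\alpha)|^2}<\infty$. Using $|1-e(k\alpha)|=2|\sin(\pi k\alpha)|\ge 4\|k\alpha\|$, the evenness $\|(-k)\alpha\|=\|k\alpha\|$, $a_{|-k|}=a_{|k|}$, and $|\hat f_k|=O(a_{|k|})$, this in turn reduces to showing $\sum_{k\ge 1}\frac{a_k^2}{\|k\alpha\|^2}<\infty$ whenever $\alpha$ is badly approximable.

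First I would split the sum into dyadic blocks $\sum_{k\ge 1}\frac{a_k^2}{\|k\alpha\|^2}=\sum_{n\ge 0}\sum_{2^n\le k<2^{n+1}}\frac{a_k^2}{\|k\alpha\|^2}$ and use $a_k\le a_{2^n}$ on the $n$-th block. Everything then hinges on the block estimate $\sum_{2^n\le k<2^{n+1}}\|k\alpha\|^{-2}=O(2^{2n})$. Here the obstacle is that the lower bound $\|q\alpha\|\ge c/q$ (the defining property of bad $\alpha$, with constant $c>0$) used termwise only yields $O(2^{3n})$ per block, which is too weak; one must instead exploit that the points $\{k\alpha\}$ with $2^n\le k<2^{n+1}$ are well separated: if $2^n\le k<k'<2^{n+1}$ then $0<k'-k<2^n$, so $\|(k'-k)\alpha\|>c/2^n$, and since this is the $\T$-distance between $\{k\alpha\}$ and $\{k'\alpha\}$, the $2^n$ block points are pairwise $(c/2^n)$-separated on $\T$.

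From this separation, the arc $\{x:\|x\|\le t\}$ of length $2t$ contains at most $2^{n+1}t/c+1$ block points, so listing the block values $\|k\alpha\|$ in increasing order $r_1\le\cdots\le r_{2^n}$ one gets $r_m\ge c(m-1)/2^{n+1}$, while $r_1\ge c/2^{n+1}$ directly; combining, $r_m\ge cm/2^{n+2}$, whence $\sum_{2^n\le k<2^{n+1}}\|k\alpha\|^{-2}=\sum_{m=1}^{2^n}r_m^{-2}\le (2^{n+2}/c)^2\sum_{m\ge 1}m^{-2}=O(2^{2n})$, as needed. To finish I would blockwise compare with the hypothesis: $\sum_{k\ge 1}k a_k^2\ge\sum_{n\ge 0}2^n\cdot 2^n a_{2^{n+1}}^2$, so $\sum_n 2^{2n}a_{2^{n+1}}^2<\infty$, which after reindexing gives $\sum_n 2^{2n}a_{2^n}^2<\infty$. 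Plugging the block estimate in, $\sum_{k\ge 1}\frac{a_k^2}{\|k\alpha\|^2}\le C\sum_{n\ge 0}a_{2^n}^2 2^{2n}<\infty$; together with the identical bound over negative indices this gives $g\in L_2(\T)$ with $(I-T_\alpha)g=f$. The one non-routine point is the block estimate — realising that bad approximability must be applied to the differences $k'-k$ to get the separation, not just to each $k$.
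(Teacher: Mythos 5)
Your proof is correct, and it takes a genuinely different route from the paper. The paper disposes of the key analytic point in one line by quoting Muromski\v\i's theorem on cosecant series with monotone coefficients (applied with exponent $2$ and $c_k=a_k^2$), together with the standard fact that badly approximable numbers are exactly those with bounded partial quotients; the conclusion $\sum_{k\ne 0}|\hat f_k|^2/\sin^2(\pi k\alpha)<\infty$ then yields $f\in(I-T_\alpha)L_2$ exactly as in your first reduction. You instead reprove the needed special case of that cosecant estimate from scratch: the dyadic decomposition, the observation that badness applied to the \emph{differences} $k'-k$ makes the points $\{k\alpha\}$, $2^n\le k<2^{n+1}$, pairwise $(c/2^n)$-separated on the circle, the resulting bound $r_m\ge cm/2^{n+2}$ for the ordered values of $\|k\alpha\|$ and hence $\sum_{2^n\le k<2^{n+1}}\|k\alpha\|^{-2}=O(2^{2n})$, and finally the Cauchy-condensation-type conversion of $\sum_k k a_k^2<\infty$ into $\sum_n 2^{2n}a_{2^n}^2<\infty$ (which is where monotonicity of $a_k$ enters, just as it does in Muromski\v\i's hypothesis). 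All steps check out, including $|1-\e^{2\pi i k\alpha}|=2|\sin(\pi k\alpha)|\ge 4\|k\alpha\|$ and the symmetric treatment of negative frequencies. What the paper's approach buys is brevity and access to the more general statement in the cited reference (arbitrary exponents, a.e.\ results elsewhere in the section); what yours buys is a self-contained elementary argument that makes the quantitative role of the badness constant explicit and does not require the reader to consult Muromski\v\i\ or the bounded-quotients characterization.
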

\begin{proof}
When $a_k \downarrow 0$, then by a result of Muromski\v\i \ \cite{Mu}, (with $\alpha=2$ and 
$c_k=a_k^2$), we have 
$$
%4 \sum_{|k|=1}^\infty \frac {|\hat f_k|^2}{|1-\e^{2\pi ikx}|^2} =
\sum_{|k|=1}^\infty \frac {|\hat f_k|^2}{|\sin(\pi kx)|^2} \le
C \sum_{|k|=1}^\infty \frac {|a_k|^2}{|\sin(\pi kx)|^2}  <\infty,
$$
for every $x$ having a continued fraction expansion with bounded elements (quotients). 
It is known (e.g. \cite[Section 11.10]{HW}) that badly approximable numbers have bounded
quotients.
%When $ka_k$ is non-increasing, this is a result of Rozhdestvenski\v\i\  \cite[Lemma 1]{Ro}.
\end{proof}

{\bf Remarks.} 1. Proposition \ref{mur} applies to $f \in L_2$ with 
$|\hat f_k| = O(1/|k|(\log|k|)^{\frac12+\delta})$.

2. Let  $\hat f_k = 1/k(\log k )^{\frac12+\delta}$ for $k>0$ and $\hat f_k=0$ for $k \le 0$.
Then $f \in (I-T_\alpha)L_2$ for any bad $\alpha$, $\sum_k k|f_k|^2< \infty$, but 
$\sum_k k^2|f_k|^2 = \infty$.

\begin{cor} \label{double-mur}
Let $f \in L_2(\mathbb T)$ satisfy $\hat f_0=0$ and $|\hat f_{k}|=O(1/k^2(\log |k|)^\gamma)$
with $\gamma>1$. Then for any badly approximable numbers $\alpha $ and $\beta$, 
%$f$ is a double coboundary of $T_\alpha$ and $T_\beta$ in $L_2$ and a joint coboundary in $C(\mathbb T)$.
$f \in (I-T_\alpha)C(\mathbb T)\cap (I-T_\beta)C(\mathbb T)$ and 
$f \in (I-T_\alpha)(I-T_\beta)L_2(\mathbb T)$.
\end{cor}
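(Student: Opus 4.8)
The plan is to prove the two memberships separately, both by working with Fourier coefficients and using that a badly approximable $\alpha$ comes with a constant $c_\alpha>0$ for which $\|k\alpha\|>c_\alpha/|k|$ for all $k\ne 0$, hence $|1-\e^{2\pi ik\alpha}|=2|\sin(\pi k\alpha)|\ge 4\|k\alpha\|>4c_\alpha/|k|$, i.e.\ $|1-\e^{2\pi ik\alpha}|^{-1}=O(|k|)$.

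For $f\in(I-T_\alpha)C(\mathbb T)\cap(I-T_\beta)C(\mathbb T)$ I would simply invoke Lemma \ref{bad-joint}(i): we have $\hat f_0=0$ and, since $\gamma>1$, $\sum_{k\ne 0}|k|\,|\hat f_k|=O\big(\sum_{k\ge 2}\frac1{k(\log k)^\gamma}\big)<\infty$, so by Lemma \ref{bad-joint}(i) $f\in(I-T_\alpha)C(\mathbb T)$ for every badly approximable $\alpha$, and in particular for the given $\alpha$ and $\beta$. Explicitly, $f=(I-T_\alpha)u$ where $u(z):=\sum_{k\ne 0}\frac{\hat f_k}{1-\e^{2\pi ik\alpha}}z^k$ is the continuous function constructed in that proof.

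For $f\in(I-T_\alpha)(I-T_\beta)L_2(\mathbb T)$ I would solve $(I-T_\alpha)(I-T_\beta)h=f$ in two stages. Stage one is the function $u$ above: $\hat u_0=0$, $(I-T_\alpha)u=f$, and by the key estimate $|\hat u_k|=\frac{|\hat f_k|}{|1-\e^{2\pi ik\alpha}|}=O(|k|\,|\hat f_k|)=O\big(\frac1{|k|(\log|k|)^\gamma}\big)$. Stage two is to apply Proposition \ref{mur} to $u$ in the variable $\beta$: setting $a_n:=\sup_{|k|\ge n}|\hat u_k|$ we get a non-increasing sequence with $a_n\to 0$ and $a_n=O\big(\frac1{n(\log n)^\gamma}\big)$, so $\sum_n n a_n^2=O\big(\sum_n\frac1{n(\log n)^{2\gamma}}\big)<\infty$ since $2\gamma>1$; as $\hat u_0=0$ and $|\hat u_k|\le a_{|k|}$, Proposition \ref{mur} gives $u\in(I-T_\beta)L_2$, say $u=(I-T_\beta)h$ with $h\in L_2$. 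Since $T_\alpha$ and $T_\beta$ commute, $f=(I-T_\alpha)u=(I-T_\alpha)(I-T_\beta)h\in(I-T_\alpha)(I-T_\beta)L_2(\mathbb T)$.

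The one delicate point — and the reason this corollary needs Proposition \ref{mur} rather than just Lemma \ref{bad-joint} — is that the naive choice $\hat h_k:=\hat f_k/[(1-\e^{2\pi ik\alpha})(1-\e^{2\pi ik\beta})]$ fails: it only yields $|\hat h_k|=O(|k|^2|\hat f_k|)=O\big(\frac1{(\log|k|)^{2\gamma}}\big)$, which is not square-summable for any $\gamma$. One has to invert the two factors separately: inverting $I-T_\alpha$ spends a single factor $|k|$, which the $|k|^{-2}$-type decay of $\hat f$ can absorb, and this leaves a function $u$ that lies \emph{just barely} in the $L_2$-range of $I-T_\beta$ — and it lies there only because Proposition \ref{mur} has the sharp hypothesis $\sum_n n a_n^2<\infty$; the cruder Lemma \ref{bad-joint}(ii), requiring $\sum|k|^2|\hat u_k|^2<\infty$, would fail by exactly the same logarithmic margin. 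Everything else is a routine convergence check.
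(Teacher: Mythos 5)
Your proof is correct and follows essentially the same route as the paper: Lemma \ref{bad-joint}(i) for the joint continuous coboundary (since $\gamma>1$ gives $\sum_{k\ne0}|k|\,|\hat f_k|<\infty$), then inverting $I-T_\alpha$ first to get the intermediate function with coefficients $\hat f_k/(1-\e^{2\pi i k\alpha})=O\bigl(1/(|k|(\log|k|)^\gamma)\bigr)$, and finally Proposition \ref{mur} (with $a_k\asymp 1/(k(\log k)^\gamma)$, $\sum_k k a_k^2<\infty$) to place it in $(I-T_\beta)L_2$. Your closing observation about why the one-step choice of $\hat h_k$ fails, and why Lemma \ref{bad-joint}(ii) would miss by a logarithm, accurately reflects the point of the paper's remark following the corollary.
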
 
\begin{proof}
By Lemma \ref{bad-joint}, $f \in (I-T_\alpha)C(\mathbb T)\cap (I-T_\beta)C(\mathbb T)$. Let 
$g(z)=\sum_{|k|=1}^\infty \frac{\hat f_k}{1-\e^{2\pi ik\alpha}} z^k$, which satisfies $(I-T_\alpha)g=f$.
%Let $g(z)=\sum_{|k|=1}^\infty \frac{\hat f_k}{2\sin(\pi k\alpha)} z^k$, which satisfies $(I-T_\alpha)g=f$.
 For $k>0$ put $a_k=1/k(\log k)^\gamma$, so $|\hat g_{|k|}| =O(a_{|k|})$. Then $\{a_k\}$ satisfies 
the assumptions of Proposition \ref{mur}, so $g \in (I-T_\beta)L_2(\mathbb T)$, which shows that
$f \in (I-T_\alpha)(I-T_\beta)L_2(\mathbb T)$.
\end{proof}

{\bf Remark.} When the Fourier coefficients of $f$ satisfy $\hat f_0=0$ and  the stronger condition
$\sum_{k \ne 0}k^2 |\hat f_k|<\infty$, we define, for $\alpha$ and $\beta$ bad,
%$c_k= \hat f_k/4\sin(\pi k\alpha)\sin(\pi k\beta)$
 $c_k= \hat f_k/(1-\e^{2\pi ik\alpha})(1-\e^{2\pi ik\beta})$ for $k \ne 0$ and $c_0=0$. 
Similarly to the proof of Lemma \ref{bad-joint}, we obtain that $\sum_k |c_k|<\infty$, and then 
$h(z)=\sum_k c_kz^k$ satisfies $f= (I-T_\alpha)(I-T_\beta)h$.

\begin{prop} \label{large-coeff}
Let the Fourier coefficients of $f \in L_2(\mathbb T)$ satisfy 
$\liminf_{|n|\to\infty} |n\hat f_n|>0$. Then:

(i) For any $\beta$ irrational, $f \not\in (I-T_\beta)L_1$
(so $f$ is not a joint coboundary of $T_\alpha$ and $T_\beta$).

(ii) For $\beta$ irrational, if $(I-T_\beta )f$ is a joint coboundary in $L_2$ with $T_\alpha$,
i.e. $(I-T_\beta)f \in (I-T_\alpha)L_2$, then  $\beta =k\alpha +n$ with $k,n \in \mathbb Z$
(i.e. $T_\alpha^k =T_\beta$).

(iii) If $\alpha$ is irrational and $T_\beta=T_\alpha^k$, then $(I-T_\beta)f$ is a joint coboundary
of $T_\alpha$ and $T_\beta$ which is not a double coboundary in $L_1$.
\end{prop}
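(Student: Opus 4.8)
The plan is to work entirely with Fourier coefficients. Recall $\widehat{T_\theta h}(n)=e(n\theta)\hat h(n)$, $|1-e(n\theta)|=2\sin(\pi\|n\theta\|)$, and $4\|n\theta\|\le|1-e(n\theta)|\le 2\pi\|n\theta\|$ for all $n$; fix $c>0$ and $N_0$ with $|\hat f(n)|\ge c/|n|$ whenever $|n|\ge N_0$. For (i), if $f=(I-T_\beta)g$ with $g\in L_1$ then $\hat g(n)=\hat f(n)/(1-e(n\beta))$ for $n\ne 0$, so along the infinitely many Dirichlet denominators $q$ of $\beta$ (for which $\|q\beta\|<1/q$) one gets $|\hat g(q)|\ge |\hat f(q)|\,q/(2\pi)\ge c/(2\pi)$ for large such $q$, contradicting Riemann--Lebesgue; since any joint coboundary of $T_\alpha,T_\beta$ in $L_p$ lies in $(I-T_\beta)L_p\subseteq (I-T_\beta)L_1$, this gives (i). For (iii), if $T_\beta=T_\alpha^k$ (necessarily $k\ne0$, $\beta$ being irrational), the identity $(I-T_\alpha^k)=(I-T_\alpha)P_k(T_\alpha)$ for a suitable (Laurent) polynomial $P_k$ shows $(I-T_\beta)f=(I-T_\alpha)[P_k(T_\alpha)f]$ is a joint coboundary; and if $(I-T_\beta)f=(I-T_\alpha)(I-T_\beta)h$ with $h\in L_1$, then $(I-T_\beta)[f-(I-T_\alpha)h]=0$, so ergodicity of $T_\beta$ forces $f=(I-T_\alpha)h+\mathrm{const}$, whence comparing Fourier coefficients and testing on Dirichlet denominators of $\alpha$ contradicts $\hat h(n)\to0$.

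\emph{Part (ii), step one (a rational relation).} Suppose $(I-T_\beta)f=(I-T_\alpha)g$ with $g\in L_2$, so $\hat g(n)=\hat f(n)(1-e(n\beta))/(1-e(n\alpha))$ and $\hat g(n)\to0$. Let $(q_j)$ be the convergent denominators of $\alpha$, with partial quotients $(a_j)$, so $\|q_j\alpha\|<1/q_{j+1}$. Then
\[
|\hat g(q_j)|\ \ge\ |\hat f(q_j)|\,\frac{4\|q_j\beta\|}{2\pi/q_{j+1}}\ \ge\ \frac{2c}{\pi}\,\frac{q_{j+1}}{q_j}\,\|q_j\beta\| ,
\]
so $\tfrac{q_{j+1}}{q_j}\|q_j\beta\|\to0$; since $a_{j+1}\le q_{j+1}/q_j$, also $a_{j+1}\|q_j\beta\|\to0$, and in particular $\|q_j\beta\|\to0$. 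Let $p_j,p_j'$ be the integers nearest $q_j\alpha,q_j\beta$ and put $\eta_j=q_j\alpha-p_j$, $\delta_j=q_j\beta-p_j'$ ($\to0$). Using $q_{j+1}=a_{j+1}q_j+q_{j-1}$ and $p_{j+1}=a_{j+1}p_j+p_{j-1}$, the integer $m_j:=p_{j+1}'-a_{j+1}p_j'-p_{j-1}'=-(\delta_{j+1}-a_{j+1}\delta_j-\delta_{j-1})$ satisfies $|m_j|\le|\delta_{j+1}|+a_{j+1}|\delta_j|+|\delta_{j-1}|\to0$, hence $m_j=0$ for all large $j$. Thus the integer triples $(q_j,p_j,p_j')$ eventually obey the common recurrence $x_{j+1}=a_{j+1}x_j+x_{j-1}$, so they lie in the subgroup of $\mathbb{Z}^3$ generated by two consecutive ones, which has rank $\le 2$; hence there is a nonzero $(\lambda,\mu,\nu)\in\mathbb{Z}^3$ with $\lambda q_j+\mu p_j+\nu p_j'=0$ for large $j$, and dividing by $q_j\to\infty$ (with $p_j/q_j\to\alpha$, $p_j'/q_j\to\beta$) yields $\lambda+\mu\alpha+\nu\beta=0$. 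As $\alpha$ is irrational, $\nu\ne0$, so $v\beta\in\mathbb{Z}\alpha+\mathbb{Z}$ for some integer $v\ge1$; take $v$ minimal and write $v\beta=u\alpha+w$ with $u,w\in\mathbb{Z}$.

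\emph{Part (ii), step two (the descent $v=1$).} Since $T_\beta=T_\alpha^k$ for some $k$ is equivalent to $v=1$, it remains to exclude $v\ge2$. Put $s_j:=up_j+wq_j$, so $s_{j+1}=a_{j+1}s_j+s_{j-1}$. From $\|q_j\beta\|=\|v^{-1}(uq_j\alpha+wq_j)\|=\|v^{-1}(s_j+u\eta_j)\|$, together with $u\eta_j\to0$ and $\|q_j\beta\|\to0$, we get $\mathrm{dist}(s_j/v,\mathbb{Z})\to0$; but $s_j/v\in v^{-1}\mathbb{Z}$, so $v\mid s_j$ for all large $j$. Running the recurrence downward via $s_{j-1}=s_{j+1}-a_{j+1}s_j$ gives $v\mid s_j$ for every $j$, hence $v\mid s_{-1}=u$ (using $p_{-1}=1$, $q_{-1}=0$) and then $v\mid s_0$ yields $v\mid w$. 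Thus $\beta=\tfrac uv\alpha+\tfrac wv\in\mathbb{Z}\alpha+\mathbb{Z}$, contradicting minimality of $v\ge2$. Therefore $v=1$, $\beta=u\alpha+w$, and $T_\beta=T_\alpha^u$, which proves (ii); the ``not a double coboundary in $L_1$'' part of (iii) was already shown above.

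\emph{Main obstacle.} Parts (i) and (iii) are routine; the substance is (ii). The crux is twofold: first, upgrading $\hat g(q_j)\to0$ to the sharp rate $a_{j+1}\|q_j\beta\|\to0$, which is precisely what forces the auxiliary integers $m_j$ to vanish and hence produces the linear recurrence for the triples $(q_j,p_j,p_j')$; and second, the final descent showing that the minimal denominator $v$ of a relation $v\beta\in\mathbb{Z}\alpha+\mathbb{Z}$ must be $1$ — i.e. that a genuine rational but non-integer dependence between $\alpha$ and $\beta$ is incompatible with $(I-T_\beta)f$ being an $L_2$-coboundary of $T_\alpha$. One should double-check the edge cases in these Diophantine steps (behavior of the recurrences at small indices, the conventions $p_{-1}=1,q_{-1}=0$, and that $q_j\beta\notin\tfrac12+\mathbb{Z}$ since $\beta$ is irrational), but no further ideas beyond the above are needed.
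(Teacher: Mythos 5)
Your parts (i) and (iii) follow essentially the paper's own route: (i) is the Riemann--Lebesgue contradiction along Dirichlet denominators $q$ with $\|q\beta\|<1/q$, and (iii) reduces, via ergodicity of $T_\beta$ and the vanishing of the extra constant, to (i) applied with $\alpha$ in place of $\beta$; both are fine. The genuine divergence is in (ii). The paper converts $(I-T_\beta)f=(I-T_\alpha)g$, $g\in L_2$, into the square-summability $\sum_{n\ne0}n^{-2}\sin^2(\pi n\beta)/\sin^2(\pi n\alpha)<\infty$ and then cites Petersen's theorem on cosecant series as a black box to conclude $\beta\in\mathbb{Z}\alpha+\mathbb{Z}$. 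You instead use only the weaker consequence $\hat g(q_j)\to0$ along the continued-fraction denominators of $\alpha$, which yields the rate $(q_{j+1}/q_j)\,\|q_j\beta\|\to0$; this forces the integers $m_j$ to vanish eventually, so the nearest integers $p_j'$ to $q_j\beta$ obey the same three-term recurrence as $(p_j,q_j)$, giving a rank-$\le 2$ lattice and hence an integer relation $\lambda+\mu\alpha+\nu\beta=0$ with $\nu\ne0$; your divisibility descent ($v\mid s_j$ for all large $j$, run downward to $s_{-1}=u$ and $s_0=ua_0+w$) then upgrades this to $\beta\in\mathbb{Z}\alpha+\mathbb{Z}$. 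I checked the steps (the bound $|m_j|\le|\delta_{j+1}|+a_{j+1}|\delta_j|+|\delta_{j-1}|$, the existence of the annihilating integer vector, the quantization $\mathrm{dist}(s_j/v,\mathbb{Z})\in v^{-1}\mathbb{Z}_{\ge0}$, and the conventions $p_{-1}=1$, $q_{-1}=0$) and they are sound; as you note, for $j\ge1$ the nearest integer to $q_j\alpha$ is indeed the convergent numerator, and $\alpha$ irrational (the standing assumption of the section) is what gives $\nu\ne0$ and the continued fraction itself. What each approach buys: the paper's proof of (ii) is three lines modulo Petersen's result, while yours is self-contained and in effect reproves the needed direction of Petersen's theorem from a hypothesis weaker than $\ell^2$-summability (decay only along $(q_j)$), at the cost of a page of continued-fraction bookkeeping.
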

\begin{proof} (i) Assume $f =(I-T_\beta)h$ with $h \in L_1$. Then, as in (**), 
$|\hat h_n| \ge  \frac{|\hat f_n|}{2\pi\|n\beta\|}$. For $n>N$ and some $C>0$ we have
$$
\frac{C}{2\pi |n| \|n\beta\|}< C \frac{|\hat f_n|}{2\pi\|n\beta\|}= {C} |\hat h_n|\to 0,
$$
using the Riemann-Lebesgue lemma. Hence $\liminf_{n\to\infty} n\|n\beta\| =\infty$, 
a contradiction to Dirichlet's theorem \cite[Theorem 185]{HW}, which yields
$\liminf_{n\to\infty} n\|n\beta\| \le 1$.
%so $\beta$ is badly approximable.
\smallskip

(ii) Let $g \in L_2$ satisfy $(I-T_\alpha)g=(I-T_\beta)f$. Computing Fourier coefficients we obtain
$$
\sum_{n \ne 0} |\hat f_n|^2 \frac {|\sin(n\pi \beta)|^2}{|\sin(n\pi \alpha)|^2} =
\sum_n |\hat g_n|^2  < \infty,
$$
so the assumption $\liminf_{|n|\to\infty} |n\hat f_n|>0$ yields
$\sum_{n \ne 0} \frac1{n^2} \frac {|\sin(n\pi \beta)|^2}{|\sin(n\pi \alpha)|^2} < \infty$.
By Petersen \cite{Pet}, $\beta \in \mathbb Z\alpha \, \mod 1$.
\smallskip

(iii)  We may assume $k>0$. Since $I-T_\alpha^k =(I-T_\alpha) \sum_{j=0}^{k-1} T_\alpha^j$,
$\ (I-T_\beta)f$ is a joint coboundary, and we may assume $\int_\mathbb Tf=0$. If there is 
$h \in L_1$ with $(I-T_\alpha)(I-T_\beta)h = (I-T_\beta)f$, then $f-(I-T_\alpha)h$ is a constant, 
which is $\int_\mathbb T f=0$. This contradicts part (i); hence $(I-T_\beta)f$ is not a double
coboundary in $L_1$.
\end{proof}

{\bf Remarks.} 1. If $f \in L_2$ satisfies $\liminf_{|n| \to\infty} |n\hat f_n| >0$, then by 
\cite{DKK} (see also \cite[p. 283]{Ka}) there exists a function $\phi \in C(\mathbb T)$ with 
$\liminf_{|n| \to\infty} |n\hat \phi_n| \ge \liminf_{|n| \to\infty} |n\hat f_n| >0$. 
\smallskip

2. Compared with Theorem \ref{joint1}, Proposition \ref{large-coeff}(iii) yields an explicit 
construction of joint coboundaries in $L_2$ which are not double coboundaries (even in $L_1$).
It also yields, via the above mentioned result of \cite{DKK}, joint coboundaries in $C(\mathbb T)$
which are not double coboundaries in $L_1$  (see Corollary \ref{anosov}). 
\smallskip

3. Part (ii) of Proposition \ref{large-coeff} proves the following special case of
Kornfeld's result \cite{Ko}: {\it If $\alpha$ and $\beta$ are irrational and 
$(I-T_\beta)L_2(\mathbb T) \subset (I-T_\alpha)L_2(\mathbb T)$, then $T_\beta =T_\alpha^k$ for 
some $ k\in\mathbb Z$.}

\begin{prop} \label{squares}
Let the Fourier coefficients of $f \in L_2(\mathbb T)$ satisfy 
\begin{equation}
 C:=\liminf_{n\to\infty} n^\delta |\hat f_{n^2}| >0 \qquad 
\text{for some fixed}\quad \delta \in (\frac12,\frac23).
\end{equation}
Then for any $\beta$ irrational, $ f \not\in (I-T_\beta)L_1$.
\end{prop}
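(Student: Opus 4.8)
The plan is to mimic the proof of Proposition~\ref{large-coeff}(i), but now inspect only the Fourier coefficients whose index is a perfect square. First I would assume, towards a contradiction, that $f=(I-T_\beta)h$ for some $h\in L_1(\T)$. Comparing Fourier coefficients gives $\hat f_n=(1-e^{2\pi in\beta})\hat h_n$ for all $n$, and since $\beta$ is irrational, $1-e^{2\pi in\beta}\neq0$ for $n\neq0$, so
$$|\hat h_n|=\frac{|\hat f_n|}{|1-e^{2\pi in\beta}|}=\frac{|\hat f_n|}{2|\sin(\pi n\beta)|}\ge\frac{|\hat f_n|}{2\pi\|n\beta\|}.$$
Evaluating at $n=m^2$ and using the hypothesis $\liminf_m m^\delta|\hat f_{m^2}|=C>0$ (so $|\hat f_{m^2}|\ge\tfrac{C}{2}m^{-\delta}$ for $m$ large), one obtains
$$|\hat h_{m^2}|\ge\frac{C}{4\pi}\cdot\frac{m^{-\delta}}{\|m^2\beta\|}\qquad\text{for all large }m.$$
Since $h\in L_1(\T)$, the Riemann--Lebesgue lemma forces $|\hat h_{m^2}|\to0$, whence $m^\delta\|m^2\beta\|\to\infty$, i.e. $\liminf_{m\to\infty}m^\delta\|m^2\beta\|=\infty$. (Equivalently, one could route this through Browder's theorem for $L_1$ contractions, cf. (\ref{browder}) and the Lin--Sine result: $f\in(I-T_\beta)L_1$ forces $\sup_N\|\sum_{k=0}^{N-1}T_\beta^kf\|_1<\infty$, and comparing Fourier coefficients and taking a supremum over $N$ gives $|\hat f_n|\le M\pi\|n\beta\|$ for every $n$, which is the same estimate at $n=m^2$.)

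The whole proposition therefore reduces to the purely number-theoretic claim that for \emph{every} irrational $\beta$ one has $\liminf_{m\to\infty}m^\delta\|m^2\beta\|<\infty$, and this is exactly where the restriction $\delta<2/3$ is used and where I expect the genuine difficulty to lie. I would deduce it from the classical estimate for the Heilbronn problem on small fractional parts of $n^2\beta$, in the form: for every irrational $\beta$ and every $\epsilon>0$ there is $C_{\beta,\epsilon}$ with $\min_{1\le n\le N}\|n^2\beta\|\le C_{\beta,\epsilon}\,N^{-2/3+\epsilon}$ for all $N$ (the admissible exponent, here taken to be just below $2/3$, is precisely what forces $\delta<2/3$). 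Fix $\epsilon$ with $2/3-\epsilon>\delta$, and for each $N$ let $m_N\in\{1,\dots,N\}$ realize the minimum. The sequence $(m_N)$ cannot be bounded: otherwise it would take finitely many values, on which $\|m^2\beta\|$ attains a strictly positive minimum (irrationality of $\beta$), contradicting $\|m_N^2\beta\|\to0$. Hence along a subsequence the $m_N$ run through infinitely many distinct values tending to infinity, and for those, using $m_N\le N$,
$$\|m_N^2\beta\|\le C_{\beta,\epsilon}\,N^{-2/3+\epsilon}\le C_{\beta,\epsilon}\,m_N^{-(2/3-\epsilon)}=o\big(m_N^{-\delta}\big),$$
so $m_N^\delta\|m_N^2\beta\|\to0$, contradicting $\liminf_{m}m^\delta\|m^2\beta\|=\infty$. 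This contradiction establishes $f\notin(I-T_\beta)L_1$.

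Two remarks about the shape of the argument. The lower bound $\delta>1/2$ in the hypothesis plays no role in the proof; it is there only so the hypothesis is not vacuous, since $|\hat f_{m^2}|\gg m^{-\delta}$ forces $\sum_m|\hat f_{m^2}|^2\gg\sum_m m^{-2\delta}$, which is finite — as it must be for $f\in L_2(\T)$ — exactly when $\delta>1/2$. And the only nontrivial ingredient is the Diophantine estimate on $\|n^2\beta\|$: granting it, Proposition~\ref{squares} is the short deduction above, entirely parallel to how Proposition~\ref{large-coeff}(i) follows from Dirichlet's theorem $\liminf_n n\|n\beta\|\le1$.
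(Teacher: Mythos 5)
Your reduction is exactly the paper's: assume $f=(I-T_\beta)h$ with $h\in L_1$, compare Fourier coefficients to get $|\hat h_n|\ge |\hat f_n|/(2\pi\|n\beta\|)$ as in (**), evaluate at perfect squares, and play the hypothesis off against the Riemann--Lebesgue lemma; the paper then finishes by citing Zaharescu's Theorem 1, which gives an increasing sequence $(n_k)$ with $\|n_k^2\beta\|<n_k^{-\delta}$ for any fixed $\delta<2/3$, so that $|\hat h_{n_k^2}|\ge C/3\pi$ does not tend to zero. The one point to fix in your write-up is the form in which you quote the Diophantine input: the uniform statement $\min_{1\le n\le N}\|n^2\beta\|\le C_{\beta,\epsilon}N^{-2/3+\epsilon}$ \emph{for all} $N$ is stronger than what is actually known --- Zaharescu's uniform bound has exponent $4/7+\epsilon$, and the exponent $2/3$ is established only in the ``infinitely many $n$'' form $\|n^2\beta\|<n^{-2/3+\epsilon}$. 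Fortunately that infinitely-often version is precisely what your argument needs (you only want $\liminf_m m^\delta\|m^2\beta\|<\infty$), and using it directly also makes your extraction of the minimizers $m_N$ and the unboundedness argument unnecessary. With the citation corrected to the infinitely-often statement, your proof coincides with the paper's; your side remark that $\delta>\frac12$ is only there to keep the hypothesis compatible with $f\in L_2$ is also accurate.
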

\begin{proof} Assume $f =(I-T_\beta)h$ with $h \in L_1$. Then, as in (**), 
$|\hat h_n| \ge  \frac{|\hat f_n|}{2\pi\|n\beta\|}$. By Zaharescu \cite[Theorem 1]{Za},
there exists an increasing subseequence $(n_k)$ with $\|n_k^2\beta\| < n_k^{-\delta}$. 
Then for $n_k>N$ we have
$$
|\hat h_{n_k^2}| \ge \frac{|\hat f_{n_k^2}|}{2\pi\|n_k^2\beta\|} \ge 
\frac{|\hat f_{n_k^2}|n_k^\delta}{2\pi} \ge \frac{C}{3\pi},
$$
which contradicts the Riemann-Lebesgue lemma. Hence $f \not\in (I-T_\beta)L_1$.
\end{proof}

{\bf Remarks.} 1. The requirement $\delta >0.5$ follows from $f \in L_2$.

2. Propositions \ref{large-coeff} and \ref{squares} are not comparable. 
In Proposition \ref{squares} we may have  $\hat f_n =0$ for infinitely many  $n>0$, while 
in Proposition \ref{large-coeff}(i), which holds also if $\liminf_{n\to\infty} n|\hat f_n|>0$,
this assumption implies $\hat f_n \ne 0$ from some place on. The price we pay in Proposition 
\ref{squares} is that the coefficients at $n^2$ have to be larger, of order $1/n^\delta$
(instead of $1/n^2$).
\medskip

{\bf Definition.} A pair $(\alpha,\beta)$ of irrational numbers is said to be 
{\it badly approximable} if
\begin{equation} \label{bad}
C(\alpha,\beta):= \liminf \sqrt{q} \max\{\|q\alpha\|,\|q\beta\|\} >0.
\end{equation}
The set ${\bf Bad}_2$ of bad pairs is not empty, by Perron \cite{Pe}. A consequence of 
Khintchine's theorem is  that it has Lebesgue measure  zero. ${\bf Bad}_2$ is uncountable, 
since it has maximal Hausdorff dimension 2 \cite{PV}. Moreover, for a given bad $\alpha$, 
$\ \dim_H\{\beta: (\alpha,\beta) \in {\bf Bad}_2\} =1$, see \cite[p. 1840]{BPV}.

For badly approximable pairs of irrationals, the method of proof of Theorem 
\ref{irrational-pair} and Corollary \ref{no-measurable} allows the construction of many 
more joint coboundaries in $C(\mathbb T)$ which are not  measurable double coboundaries.
%in $L_1(\mathbb T)$.

\begin{prop}
Let $(\alpha,\beta)$ be a badly approximable pair. Then there exists an infinite sequence 
of positive integers $(q_k)_{k \ge 1}$ with $\sum_k \frac1{\sqrt{q_k}} < \infty$, such that 
whenever  $\sum_k|a_k| < \infty$ and $\limsup\sqrt{q_k}|a_k|>0$, the function 
$f \in C(\mathbb T)$, with $\hat f_{q_k}=a_k$ and $\hat f_n=0$ for $n \not\in (q_k)$, yields 
a joint coboundary $(I-T_\alpha)f \in C(\mathbb T)$ which is not a measurable double coboundary.
%in $L_1(\mathbb T)$.
\end{prop}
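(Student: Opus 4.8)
The plan is to rerun the constructions from the proofs of Theorem~\ref{irrational-pair} and Corollary~\ref{no-measurable}, but now keeping the Fourier coefficients $\hat f_{q_k}=a_k$ as a free sequence (constrained only by $\sum_k|a_k|<\infty$ and $\limsup_k\sqrt{q_k}\,|a_k|>0$) instead of fixing them to $\|q_k\beta\|$. For this to go through one must choose $(q_k)$ in advance so that it is lacunary, satisfies $\sum_k q_k^{-1/2}<\infty$, and admits absolute constants $C',M$ with $\|q_k\beta\|\le C' q_k^{-1/2}$ and $\|q_k\alpha\|\le M\,\|q_k\beta\|$ for all $k$; equivalently, both $\|q_k\alpha\|$ and $\|q_k\beta\|$ are comparable to $q_k^{-1/2}$. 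Producing such a $(q_k)$ is the only non‑routine point, and it is exactly where the hypothesis that $(\alpha,\beta)$ is a badly approximable pair is used.

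First I would select $(q_k)$. By the two‑dimensional Dirichlet theorem \cite[Theorem~200]{HW}, used already in the proof of Theorem~\ref{irrational-pair}, there are infinitely many positive integers $q$ with $\max\{\|q\alpha\|,\|q\beta\|\}<q^{-1/2}$; by \eqref{bad}, all large such $q$ also satisfy $c\,q^{-1/2}\le\max\{\|q\alpha\|,\|q\beta\|\}$ for a fixed $c>0$, so along this set $\max\{\|q\alpha\|,\|q\beta\|\}\asymp q^{-1/2}$. One then passes to the infinite subset of these $q$ for which the larger of the two norms is $\|q\beta\|$, where $\|q\beta\|\asymp q^{-1/2}$ and $\|q\alpha\|\le\|q\beta\|<q^{-1/2}$, and thins it further to a lacunary subsequence with $q_{k+1}/q_k\ge Q>1$ and $\sum_k q_k^{-1/2}<\infty$. \emph{The main obstacle is precisely the claim that $\|q\beta\|$ is the larger coordinate for infinitely many Dirichlet‑good $q$.} I would establish this from the boundedness of the $\mathrm{diag}({\rm e}^{2t},{\rm e}^{-t},{\rm e}^{-t})$–orbit of the associated lattice (the Dani correspondence): for a badly approximable pair this orbit stays in a compact set, which forces the best simultaneous–approximation vectors to have both nontrivial coordinates of order $q^{-1/2}$ infinitely often; this is the step I would develop carefully, or else cite from the structure theory of badly approximable pairs.

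Given $(q_k)$, fix any $(a_k)$ with $\sum_k|a_k|<\infty$ and $\limsup_k\sqrt{q_k}\,|a_k|>0$, and set $\hat f_n=a_k$ for $n=q_k$ and $\hat f_n=0$ otherwise, so $f(z)=\sum_k a_k z^{q_k}\in C(\mathbb T)$; since some $a_k\ne 0$, $f\not\equiv0$ and $(I-T_\alpha)f\not\equiv0$. Putting $\hat g_{q_k}=\hat f_{q_k}(1-{\rm e}^{2\pi iq_k\alpha})/(1-{\rm e}^{2\pi iq_k\beta})$ and $\hat g_n=0$ otherwise, the computation of the display $(*)$ in the proof of Theorem~\ref{irrational-pair}, using $2x\le\sin(\pi x)\le\pi x$ on $(0,\tfrac12]$ together with $\|q_k\alpha\|\le\|q_k\beta\|$, gives $|\hat g_{q_k}|=|a_k|\,\dfrac{\sin(\pi\|q_k\alpha\|)}{\sin(\pi\|q_k\beta\|)}\le\dfrac{\pi}{2}|a_k|$, so $g(z)=\sum_k\hat g_{q_k}z^{q_k}\in C(\mathbb T)$ and $(I-T_\beta)g=(I-T_\alpha)f$. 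Hence $(I-T_\alpha)f\in(I-T_\alpha)C(\mathbb T)\cap(I-T_\beta)C(\mathbb T)$ is a joint coboundary in $C(\mathbb T)$, with lacunary Fourier series (supported on $(q_k)$).

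Finally, suppose $(I-T_\alpha)(I-T_\beta)h=(I-T_\alpha)f$ for some measurable $h$. Ergodicity of $T_\alpha$ forces $f-(I-T_\beta)h$ to be a constant; since $\hat f_0=0$ and $(I-T_\beta)h$ is then continuous, Anosov's theorem \cite[Theorem~1]{An} makes the constant zero, so $f=(I-T_\beta)h$, and Herman's theorem \cite{He} (applicable because $f$ is continuous with lacunary Fourier series, as in Corollary~\ref{no-measurable}) yields $h\in L_2(\mathbb T)\subset L_1(\mathbb T)$. Comparing Fourier coefficients at $n=q_k$ exactly as in the display $(**)$ of the proof of Theorem~\ref{irrational-pair} gives $|\hat h_{q_k}|=\dfrac{|a_k|}{2\sin(\pi\|q_k\beta\|)}\ge\dfrac{|a_k|}{2\pi\|q_k\beta\|}>\dfrac{\sqrt{q_k}}{2\pi}|a_k|$, using $\|q_k\beta\|<q_k^{-1/2}$, whence $\limsup_k|\hat h_{q_k}|\ge\dfrac1{2\pi}\limsup_k\sqrt{q_k}\,|a_k|>0$, contradicting the Riemann–Lebesgue lemma for $h\in L_1(\mathbb T)$. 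This contradiction shows $(I-T_\alpha)f$ is not a measurable double coboundary, completing the proof.
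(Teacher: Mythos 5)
Your analytic core is exactly the paper's argument and is correct \emph{conditional on the choice of $(q_k)$}: defining $\hat g_{q_k}$ through \eqref{relation} and bounding $|\hat g_{q_k}|\le\frac{\pi}{2}|a_k|$ as in the display $(*)$ of Theorem \ref{irrational-pair} (which needs $\|q_k\alpha\|\le M\|q_k\beta\|$), the Riemann--Lebesgue contradiction as in $(**)$ (which needs $\|q_k\beta\|\le C' q_k^{-1/2}$ together with $\limsup_k\sqrt{q_k}|a_k|>0$), and the reduction of a measurable solution to $f=(I-T_\beta)h$ via ergodicity of $T_\alpha$ and Anosov's theorem, followed by Herman's theorem for the lacunary series, exactly as in Corollary \ref{no-measurable}.

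The genuine gap is the step you yourself flag as the main obstacle: producing the sequence $(q_k)$. You reduce it to the claim that $\|q\beta\|$ is the larger coordinate for infinitely many Dirichlet-good $q$ (or that both $\|q\alpha\|$ and $\|q\beta\|$ are of order $q^{-1/2}$ infinitely often), and you only sketch a justification via boundedness of a diagonal orbit, without a proof or a citable statement; for a \emph{fixed} labelling of the pair neither claim is evident, and nothing in the definition \eqref{bad} singles out $\beta$. The paper's selection needs no such input: take $(q_k)$ along which $\sqrt{q_k}\max\{\|q_k\alpha\|,\|q_k\beta\|\}\to C(\alpha,\beta)=C\in(0,1]$ (the upper bound $C\le1$ being Dirichlet's theorem), note that one of the two alternatives $\|q_k\beta\|\ge\|q_k\alpha\|$ or $\|q_k\alpha\|>\|q_k\beta\|$ occurs for infinitely many $k$, relabel $\alpha$ and $\beta$ if necessary so that it is the first, and pass to that subsequence, thinned to be lacunary with $\sum_k q_k^{-1/2}<\infty$; then automatically $\tfrac{C}{2}\le\sqrt{q_k}\|q_k\beta\|\le 2C$ and $\|q_k\alpha\|/\|q_k\beta\|\le1$, which is all your subsequent computation uses. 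You have in fact put your finger on a real subtlety hidden in the paper's ``without loss of generality'': since the statement attaches the free coefficients $a_k$ to the function on which $I-T_\alpha$ acts, the relabelling proves the conclusion for at least one of the two orderings of the pair. But the intended remedy is this interchange of names (the hypothesis being symmetric in $\alpha,\beta$), not a new result in simultaneous Diophantine approximation, and as written your proof is incomplete at its crucial arithmetic step, with no argument supplied for the claim on which everything else rests.
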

\begin{proof}
By the two-dimensional Dirichlet theorem, $C(\alpha,\beta) \le 1$. Let 
$$\lim_k \sqrt{q_k} \max\{\|q_k\alpha\|,\|q_k\beta\|\} =C(\alpha,\beta)=C>0.
$$
Without loss of generality, we may assume $\|q_k\beta\| \ge \|q_k\alpha\|$
for infinitely many $q_k$.  We take an increasing subsequence of these $q_k$, still
denoted by $(q_k)$, such that 
$$
\frac{C}2 \le \sqrt{q_k} \|q_k\beta\| \le 2C \quad \forall k>0, \quad \text{and }
\sum_k \frac1{\sqrt{q_k}} < \infty.
$$ 
We take a further subsequence, still denoted by $(q_k)$, such that 
$inf_k \frac{ q_{k+1}}{q_k}\ge Q >1$ ($(q_k)$ is lacunary). By the choice of $q_k$, we have
$\frac{\|q_k\alpha\|}{\|q_k\beta\|}\le 1$, so for any $f$ with $\hat f_{q_k}=a_k$ and 
$\hat f_n=0$ otherwise, we get, as in (*), that $g$ with $\hat g_n$ defined by (\ref{relation}) 
is in $C(\mathbb T)$, and $(I-T_\alpha)f=(I-T_\beta)g$. The condition $\limsup \sqrt{q_k}|a_k|$
is used to obtain that $(I-T_\alpha)f$ is not a double coboundary in $L_1$, by a contradiciton
to the Riemann-Lebesgue lemma, similar to (**). If $(I-T_\alpha)f=(I-T_\alpha)(I-T_\beta)h$,
then $(I-T_\beta)h=f$ (since $\hat f_0=0$).  Since $(q_k)$ is lacunary, by Herman \cite{He}
$h \in L_2$, contradicting the fact that $(I-T_\alpha)f$ is not an $L_1$ double cobundary.
%the proof of
%Corollary \ref{no-measurable} shows that $(I-T_\alpha)f$ is not a measurable double coboundary.
\end{proof}
\bigskip

%\section{A degenerate CLT for stationary random fields}
\section{Coboundary sums and uniform ergodicity of commuting contractions}

Let $\theta$ and $\tau$ be commuting ergodic measure preserving transformations of the probabilty 
space $(\Omega, \mathcal B,\mathbb P)$, and let $f \in L_2(\mathbb P)$ with 
$\int_\Omega f \ d\mathbb P=0$.  The central limit theorem (CLT) problem is to find conditions 
for the convergence in distribution of 
$\frac1n \sum_{k=0}^{n-1}\sum_{j=0}^{n-1} f(\theta^k\tau^j \omega)$. The Koopman operators
$Tg=g\circ \theta$ and $Sg=g\circ \tau$  commute, so the CLT problem is the convergence in 
distribution of $\frac1n \sum_{k=0}^{n-1}\sum_{j=0}^{n-1} T^kS^jf$. When this latter expression
converges in $L_2$-norm to zero, we have a degenerate CLT (a zero {\it asymptotic variance}).
This motivates the results of this section.

\begin{prop} \label{degenerate}
Let $T$ and $S$ be commuting mean ergodic contractions on a Banach space $X$ with $F(T)=F(S)$,
and let $z := (I-T)x+(I-S)y$. Then $\|\frac1n \sum_{k=0}^{n-1}\sum_{j=0}^{n-1} T^kS^jz\|$
converges to zero.
\end{prop}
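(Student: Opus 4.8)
The plan is to use commutativity to telescope the double sum into a single Cesàro average applied to a ``small'' vector, and then invoke the mean ergodic theorem together with the hypothesis $F(T)=F(S)$ to kill the resulting boundary term. By linearity it suffices to treat $z=(I-T)x$ and $z=(I-S)y$ separately, and by the symmetry of the roles of $T$ and $S$ only one of the two cases needs a detailed argument.

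First I would write, using that $T$ and $S$ commute and the telescoping identity $\sum_{k=0}^{n-1}T^k(I-T)=I-T^n$,
\[
\frac1n\sum_{k=0}^{n-1}\sum_{j=0}^{n-1}T^kS^j(I-T)x
=\Big(\frac1n\sum_{j=0}^{n-1}S^j\Big)(I-T^n)x
=A_n(S)(I-T^n)x=(I-T^n)A_n(S)x,
\]
the last equality again by commutativity. Since $S$ is mean ergodic, $A_n(S)x\to E_Sx$, where $E_S$ is the mean ergodic projection of $S$ onto $F(S)$ along $\overline{(I-S)X}$. Here is the one place the hypothesis enters: $E_Sx\in F(S)=F(T)$, so $T^nE_Sx=E_Sx$ for every $n$, whence $(I-T^n)E_Sx=0$ identically in $n$. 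Therefore $(I-T^n)A_n(S)x=(I-T^n)\big(A_n(S)x-E_Sx\big)$, and since $T$ is a contraction, $\|(I-T^n)(A_n(S)x-E_Sx)\|\le 2\,\|A_n(S)x-E_Sx\|\to 0$.

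By the identical computation with the roles of $T$ and $S$ interchanged, $\frac1n\sum_{k=0}^{n-1}\sum_{j=0}^{n-1}T^kS^j(I-S)y=(I-S^n)A_n(T)y\to 0$, using now $E_Ty\in F(T)=F(S)$. Adding the two contributions gives $\|\frac1n\sum_{k=0}^{n-1}\sum_{j=0}^{n-1}T^kS^jz\|\to 0$. There is essentially no obstacle here: the only point requiring care is that the exponent $n$ in $T^n$ and the averaging length in $A_n(S)$ move together, but this causes no difficulty precisely because $(I-T^n)E_Sx$ vanishes for all $n$, so no uniform estimate is needed. Recognizing that $F(T)=F(S)$ is exactly what forces this vanishing is the crux of the proof.
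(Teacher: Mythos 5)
Your proof is correct and follows essentially the same route as the paper's: the same telescoping of the double sum into $A_n(S)(I-T^n)x$, the same appeal to mean ergodicity of $S$, and the same use of $F(T)=F(S)$ to make $E_Sx$ fixed under $T$ (the paper just subtracts $E_Sx$ from $x$ at the outset rather than after applying $A_n(S)$). Nothing further is needed.
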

\begin{proof}
It is enough to prove when $z=(I-T)x$.  Let $E_S:= \lim_n \frac1n\sum_{j=0}^{n-1} S^j$ (in the 
strong operator topology). Since $E_Sx$ is $S$-invariant it is also $T$-invariant, and  we may 
replace $x$ by $x -E_Sx$, so we assume $E_Sx=0$. 
%Since $F(T)=F(S)$, by \cite[Remark 2.5]{CL}, $\overline{(I-T)X}=\overline{(I-S)X}$, so also $E_Tx=0$.
Then
$$
\Big\|\frac1n \sum_{k=0}^{n-1}\sum_{j=0}^{n-1} T^kS^j z \Big\| =
\Big\| \frac1n \sum_{j=0}^{n-1} S^j (I-T^n)x \Big\| \le
\Big\|\frac1n \sum_{j=0}^{n-1} S^jx \Big\|+\Big\| T^n \big(\frac1n \sum_{j=0}^{n-1} S^jx \big) \Big\| 
\le 
$$
$$2\Big\| \frac1n \sum_{j=0}^{n-1} S^j x \Big\| \to \|E_Sx\|=0.
$$
\end{proof}

{\bf Remarks.} 1. If $F(S)\ne F(T)$, then for $x=Sx\ne Tx$, the proposition fails when
$T^nx \not\to x$. For example, on $X$ reflexive take $S=I$ and $T$ such $T^n \to 0$ in the
weak operator topology.

2. $z=(I-T)x+(I-S)y$ is a joint coboundary if and only if both $(I-T)x$ and $(I-S)y$ are.

3. If $T$ and $S$ are induced by commuting invertible ergodic probability preserving
transformations and $S \ne T^k$ for any $k \in \mathbb Z$, then \cite{Ko} there exists $y$ 
such that $(I-S)y \not\in (I-T)X$, so for any $x$, $\ z=(I-T)x+(I-S)y$ is not a coboundary 
of $T$, so $z$ is not a joint coboundary.

4. A special case of the result of Lind \cite{Li} is that if $T$ and $S$ are induced by 
commuting invertible probability preserving transformations such that $T^mS^n \ne I$ when 
$m \ne n$, then for every measurable $h$ there exist measurable $f$ and $g$ such that 
$h=(I-T)f +(I-S)g$. Proposition \ref{degenerate} requires $f$ and $g$ to be in the same
Banach space as $h$ (e.g $L_p$).

5. The proof of Proposition \ref{degenerate} can be easily modified to show that for any 
sector $\mathcal S:=\{(m,n) \in \mathbb N^2: 0<\alpha \le \frac mn \le \beta <\infty\}$ 
we have
$$ \lim_{n\wedge m \to \infty, \ (m,n) \in \mathcal S} 
\Big\| \frac1{\sqrt{mn}} \sum_{k=0}^{m-1}\sum_{j=0}^{n-1} T^kS^jz \Big\| =0.
$$
\smallskip

The following proposition is well-known, and its proof is similar to the proof for a single
operator; the Hahn-Banach theorem is used to show the "only if" in (i).

\begin{prop} \label{squares-se}
let $T$ and $S$ be commuting power-bounded operators on a Banach space $X$. Then:

(i) $\displaystyle{ \big\|\frac1{n^2}\sum_{k=0}^{n-1}\sum_{j=0}^{n-1} T^kS^jz \big\| \to 0}$ 
if and only if $z \in \overline{(I-T)X+(I-S)X}$.

(ii) $\displaystyle{ \frac1{n^2}\sum_{k=0}^{n-1}\sum_{j=0}^{n-1} T^kS^j}$ converges stronlgy
if and only if 
$$X=[F(T)\cap F(S)] \oplus \overline{(I-T)X+(I-S)X}.$$
\end{prop}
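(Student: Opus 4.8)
The plan is to reduce everything to the behaviour of the product $M_n(T)M_n(S)$, where $M_n(T):=\frac1n\sum_{k=0}^{n-1}T^k$ and $M_n(S):=\frac1n\sum_{j=0}^{n-1}S^j$. Commutativity gives $\frac1{n^2}\sum_{k=0}^{n-1}\sum_{j=0}^{n-1}T^kS^j=M_n(T)M_n(S)$, and power-boundedness gives a uniform bound $\sup_n\|M_n(T)M_n(S)\|\le(\sup_k\|T^k\|)(\sup_j\|S^j\|)=:C<\infty$, which will be used repeatedly to pass from subspaces to their closures.

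First I would prove (i). For the ``if'' direction I would verify the claim on the generators of $(I-T)X+(I-S)X$: from the telescoping identity $M_n(T)(I-T)=\frac1n(I-T^n)$ one gets $\|M_n(T)M_n(S)(I-T)x\|=\|M_n(S)\tfrac1n(I-T^n)x\|\le \frac{C}{n}\,(1+\sup_k\|T^k\|)\|x\|\to 0$, and symmetrically for $z=(I-S)y$; by linearity the limit is $0$ on all of $(I-T)X+(I-S)X$, and the uniform bound $C$ upgrades this to the closure by a routine $\varepsilon/3$ approximation. For the ``only if'' direction, suppose $M_n(T)M_n(S)z\to 0$ but $z\notin\overline{(I-T)X+(I-S)X}$. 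By the Hahn-Banach theorem there is $\phi\in X^*$ annihilating $(I-T)X+(I-S)X$ with $\phi(z)\ne 0$; annihilation forces $T^*\phi=\phi=S^*\phi$, so $\phi(M_n(T)M_n(S)z)=\frac1{n^2}\sum_{k=0}^{n-1}\sum_{j=0}^{n-1}\big((S^*)^j(T^*)^k\phi\big)(z)=\phi(z)$ for every $n$, contradicting $\phi(M_n(T)M_n(S)z)\to 0$. This Hahn-Banach duality step, exactly parallel to the one-operator mean ergodic theorem and to the argument in Theorem \ref{closures}, is the only point that is not pure bookkeeping.

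Then (ii) follows from (i). If $X=[F(T)\cap F(S)]\oplus\overline{(I-T)X+(I-S)X}$, write $z=z_1+z_2$ accordingly: $M_n(T)M_n(S)z_1=z_1$ since $z_1$ is fixed by both $T$ and $S$, and $M_n(T)M_n(S)z_2\to 0$ by (i), so $M_n(T)M_n(S)z\to z_1$ and the averages converge strongly. Conversely, if $M_n(T)M_n(S)\to P$ strongly, then $P$ is bounded by the uniform bound $C$; using $TM_n(T)=M_n(T)+\frac1n(T^n-I)$ gives $TM_n(T)M_n(S)z=M_n(T)M_n(S)z+\frac1n M_n(S)(T^n-I)z\to Pz$, while continuity of $T$ gives $TM_n(T)M_n(S)z\to TPz$, hence $TPz=Pz$; symmetrically $SPz=Pz$, so $PX\subseteq F(T)\cap F(S)$. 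Since $M_n(T)M_n(S)$ fixes every vector of $F(T)\cap F(S)$, $P$ is the identity there, so $P$ is a bounded projection with range exactly $F(T)\cap F(S)$, and by (i) its kernel is precisely $\{z:M_n(T)M_n(S)z\to 0\}=\overline{(I-T)X+(I-S)X}$. Therefore $X=PX\oplus\ker P$ is the asserted direct sum.

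The argument is elementary once (i) is in hand; the main (and essentially the only) obstacle is the Hahn-Banach step in the ``only if'' half of (i), the rest being manipulation with the uniform bound $C$ and the identities $M_n(T)(I-T)=\frac1n(I-T^n)$ and $TM_n(T)=M_n(T)+\frac1n(T^n-I)$.
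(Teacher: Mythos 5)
Your proof is correct and follows exactly the route the paper indicates: the paper gives no detailed argument, remarking only that the statement is proved like the single-operator mean ergodic theorem with the Hahn--Banach theorem handling the ``only if'' in (i), which is precisely what you carry out via the identities $M_n(T)(I-T)=\frac1n(I-T^n)$, the uniform bound, the annihilating functional fixed by $T^*$ and $S^*$, and the limit projection $P$ onto $F(T)\cap F(S)$ with kernel $\overline{(I-T)X+(I-S)X}$. No gaps; your write-up simply supplies the standard details the paper leaves to the reader.
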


\begin{theo} \label{squares-ue}
Let $T$ and $S$ be commuting mean ergodic contractions on a Banach space $X$.
Then the following are equivalent:

(i) $(I-T)X+(I-S)X$ is closed in $X$.

(ii) $\displaystyle{ \frac1{n^2}\sum_{k=0}^{n-1}\sum_{j=0}^{n-1} T^kS^j}$ converges in 
operator-norm, as $n \to \infty$.

(iii) $\displaystyle{\frac1{nm} \sum_{k=0}^{n-1}\sum_{j=1}^{m-1}T^kS^j}$ converges in 
operator-norm  as $\min(n,m) \to \infty$.
\end{theo}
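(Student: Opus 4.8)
The plan is to establish the cycle $(\mathrm{i})\Rightarrow(\mathrm{iii})\Rightarrow(\mathrm{ii})\Rightarrow(\mathrm{i})$, after first pinning down the common limit and the direct-sum decomposition that underlies all three statements. Since $T$ and $S$ are commuting mean ergodic contractions, $A_n(T)\to E_T$ and $A_n(S)\to E_S$ strongly, where $E_T,E_S$ are the projections onto $F(T),F(S)$ with null spaces $\overline{(I-T)X},\overline{(I-S)X}$; a three-$\varepsilon$ argument using $\|A_n(T)\|\le1$ then gives $A_n(T)A_n(S)\to E:=E_TE_S$ strongly. Commutativity makes $E$ a bounded projection commuting with $T$ and $S$, with $E(I-T)=E(I-S)=0$, so $W:=\overline{(I-T)X+(I-S)X}\subseteq\ker E$; one checks (either directly, by chasing a vector annihilated by $E$, or by applying Proposition~\ref{squares-se}(ii) to the strong convergence just obtained) that in fact $X=[F(T)\cap F(S)]\oplus W$, with $E$ the projection onto $F(T)\cap F(S)$ along $W$. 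As $W$ and $F(T)\cap F(S)$ are invariant under every $A_n(T)A_m(S)$, and these operators restrict to the identity on $F(T)\cap F(S)$, each of (i)--(iii) amounts to the corresponding average converging to $E$ in operator norm, which is equivalent to the norm of its restriction to $W$ going to $0$.

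For $(\mathrm{i})\Rightarrow(\mathrm{iii})$: if $(I-T)X+(I-S)X$ is closed it equals $W$, so by the open mapping theorem the surjection $(x,y)\mapsto(I-T)x+(I-S)y$ of $X\times X$ onto $W$ is open; hence there is $c>0$ so that every $w\in W$ has a representation $w=(I-T)x+(I-S)y$ with $\|x\|+\|y\|\le c\|w\|$. Using the telescoping identities $A_n(T)(I-T)=\tfrac1n(I-T^n)$, $A_m(S)(I-S)=\tfrac1m(I-S^m)$ and the commutativity of the four averages, $\|A_n(T)A_m(S)w\|\le\tfrac2n\|x\|+\tfrac2m\|y\|\le\tfrac{4c}{\min(n,m)}\|w\|$, so $\|A_n(T)A_m(S)|_W\|\le 4c/\min(n,m)$ and $A_n(T)A_m(S)\to E$ in operator norm as $\min(n,m)\to\infty$; removing the $j=0$ summand, which contributes $\tfrac1m A_n(T)$ of norm $\le1/m$, gives exactly (iii).

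$(\mathrm{iii})\Rightarrow(\mathrm{ii})$ is immediate: take $m=n$ and add back $\tfrac1n A_n(T)$, whose norm is $\le1/n$. For $(\mathrm{ii})\Rightarrow(\mathrm{i})$ I would use the identity
\[
I-A_n(T)A_n(S)=\bigl(I-A_n(S)\bigr)+\bigl(I-A_n(T)\bigr)A_n(S)=(I-S)D_n(S)+(I-T)\bigl(D_n(T)A_n(S)\bigr),
\]
where $D_n(R):=\tfrac1n\sum_{j=0}^{n-2}(n-1-j)R^j$ satisfies $(I-R)D_n(R)=I-A_n(R)$; this shows that $I-A_n(T)A_n(S)$ maps $X$ into $(I-T)X+(I-S)X$ for every $n$. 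If $\|A_n(T)A_n(S)-E\|\to0$, pick $n_0$ with $\|A_{n_0}(T)A_{n_0}(S)|_W\|<1$; then $\bigl(I-A_{n_0}(T)A_{n_0}(S)\bigr)|_W$ is invertible on $W$ by the Neumann series, so $W=\bigl(I-A_{n_0}(T)A_{n_0}(S)\bigr)W\subseteq(I-T)X+(I-S)X\subseteq W$, i.e.\ $(I-T)X+(I-S)X=W$ is closed.

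The only ingredient beyond bookkeeping is the equivalence $(\mathrm{i})\Leftrightarrow(\mathrm{ii})$: its forward half converts closedness of the coboundary sum into an $O(1/n)$ decay rate on $W$ via the open mapping theorem and telescoping, and its backward half reads closedness off a single $n_0$ at which $I-A_{n_0}(T)A_{n_0}(S)$ is invertible on $W$. I expect the genuinely delicate point to be the preliminary step — verifying that the strong limit $E=E_TE_S$ is precisely the projection with kernel $W$, so that norm convergence to $E$ is the same as $\|A_n(T)A_n(S)|_W\|\to0$; this is exactly where the joint mean ergodicity and commutativity of $T$ and $S$ are used, through the splitting $X=[F(T)\cap F(S)]\oplus W$ of Proposition~\ref{squares-se}(ii).
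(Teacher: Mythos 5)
Your proof is correct, and it takes a genuinely different route from the paper's. The paper proves (i)$\Rightarrow$(ii) by a Banach--Steinhaus argument: for $1<\alpha<2$ the $n^{-\alpha}$-normalized sums tend to zero pointwise on the closed subspace $Y=(I-T)X+(I-S)X$, uniform boundedness then gives a uniform bound, and dividing by a further $n^{2-\alpha}$ yields norm convergence to $0$ on $Y$; the remaining implications ((ii)$\Leftrightarrow$(iii) and each $\Rightarrow$(i)) are delegated to the appendix Theorem \ref{mv-real}, whose proof of (i)$\Rightarrow$(ii) in turn cites the spectral-theoretic result of Mbekhta--Vasilescu and handles real spaces by complexification. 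You instead make the whole cycle self-contained: the open mapping theorem applied to $(x,y)\mapsto(I-T)x+(I-S)y$ gives representations with $\|x\|+\|y\|\le c\|w\|$, and telescoping then yields the explicit bound $\|A_n(T)A_m(S)|_W\|=O(1/\min(n,m))$, so you obtain the two-parameter convergence (iii) directly from (i) with a rate, with no spectral theory and no complexification (the argument is insensitive to the scalar field); your (ii)$\Rightarrow$(i) is in the same spirit as the paper's proof of (iii)$\Rightarrow$(i) in Theorem \ref{mv-real} (restrict to the closure, invert $I$ minus the average by a Neumann series, and exhibit $I-A_{n_0}(T)A_{n_0}(S)$ as an element of $(I-T)B(X)+(I-S)B(X)$), but your identity $I-A_n(T)A_n(S)=(I-S)D_n(S)+(I-T)D_n(T)A_n(S)$ with $(I-R)D_n(R)=I-A_n(R)$ is cleaner than the paper's expansion through $(I-T^k)(I-S^j)$. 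The identification of the limit via $X=[F(T)\cap F(S)]\oplus W$ and $E=E_TE_S$ is exactly where mean ergodicity enters, as in the paper (Proposition \ref{squares-se}(ii)), and your direct verification of $\ker E=W$ is sound. What the paper's route buys is that Theorem \ref{squares-ue} becomes a quick consequence of the appendix result, which is needed anyway; what yours buys is an elementary, fully quantitative proof of this theorem that bypasses \cite{MV} entirely. (Only cosmetic point: saying that each of (i)--(iii) ``amounts to'' norm convergence to $E$ is loose for (i), but your actual implications never use that phrasing.)
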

\begin{proof}
Assume (i), and put $Y:=\overline{(I-T)X+(I-S)X}$. By (i) $Y=(I-T)X+(I-S)X$.
 Fix $1< \alpha <2$. For $z=(I-T)x$ we have
$$
\Big\| \frac1{n^\alpha} \sum_{k=0}^{n-1}\sum_{j=0}^{n-1}T^kS^j z \Big\|=
\Big\|\frac1{n^\alpha} \sum_{j=0}^{n-1}S^j (I-T^n)x\Big\| \le
\Big\|\frac1{n} \sum_{j=0}^{n-1}S^j\Big\| \cdot\Big\|\frac{ (I-T^n)x}{n^{\alpha-1}}\Big\| \to 0.
$$
A similar computation for $z=(I-S)y$ shows that for $z \in (I-T)X+(I-S)X=Y$ we have
$ \frac1{n^\alpha} \sum_{k=0}^{n-1}\sum_{j=0}^{n-1}T^kS^j z \to 0$.  Hence
$\sup_n \big\|\frac1{n^\alpha} \sum_{k=0}^{n-1}\sum_{j=0}^{n-1}T^kS^j \big\|_Y < \infty$
by the Banach-Steinhaus theroem, which yields 
$\big\|\frac1{n^2} \sum_{k=0}^{n-1}\sum_{j=0}^{n-1}T^kS^j \big\|_Y \to 0$.
Since $T$ and $S$ are mean ergodic, by Proposition \ref{squares-se}(ii) (see also
\cite[Lemma 2.2]{CL}) we have $X=[F(T)\cap F(S)] \oplus Y$.
Let $E$ be the corresponding projection on $F(T)\cap F(S)$; then
$\big\|\frac1{n^2} \sum_{k=0}^{n-1}\sum_{j=0}^{n-1}T^kS^j -E \big\| \to 0$.

%Assume (ii), and put $Y:=\overline{(I-T)X+(I-S)X}$. 
%%It is easy to compute that 
%Then Proposition \ref{squares-se}  yields
%${ \frac1{n^2}\sum_{k=0}^{n-1}\sum_{j=0}^{n-1} T^kS^jz \to 0}$ 
%for $z \in Y$, and by (ii), the restrictions of $T$ and $S$  to $Y$ satisfy
%$\|\frac1{n^2}\sum_{k=0}^{n-1}\sum_{j=0}^{n-1} T^kS^j\|_Y \to 0$.
%Hence, for $n$ large enough
%$$
%A_n:= I_Y - \frac1{n^2} \sum_{k=0}^{n-1}\sum_{j=0}^{n-1} T^kS^j=
 %\frac1{n^2} \sum_{k=0}^{n-1}\sum_{j=0}^{n-1}(I- T^kS^j)
%$$
%is invertible  on $Y$. Since $(I-T^k)(I-S^j) =I-T^k +I-S^j -(I-T^kS^j)$, on $Y$ we have
%$$
%A_n=\frac1n\sum_{k=0}^{n-1}(I-T^k) +\frac1n\sum_{j=0}^{n-1}(I-S^j) -
%\frac1{n^2} \sum_{k=0}^{n-1} \sum_{j=0}^{n-1} (I-T^k)(I-S^j).
%$$
%Let $y \in Y$, and fix $n$ large. Then by invertibility of $A_n$ on $Y$ there exists $z \in Y$
%such that $A_nz=y$, and we obtain
%$$
%y=A_n z =
%\frac1n\sum_{k=0}^{n-1}(I-T^k)z +\frac1n\sum_{j=0}^{n-1}(I-S^j)z-
%\frac1{n^2} \sum_{k=0}^{n-1} \sum_{j=0}^{n-1} (I-T^k)(I-S^j)z.
%$$
%The first and last terms of the right hand side are in $(I-T)X$, the middle one is in $(I-S)X$.
%Thus $Y \subset (I-T)X + (I-S)X \subset Y$, which proves (i).
\smallskip

By Theorem \ref{mv-real}, (iii) and (ii) are equivalent, and each implies (i).
\end{proof}

{\bf Remark.} Unlike Theorem \ref{mv-real}, we do not need to assume in (i) that 
$(I-T^*)X^* + (I-S^*)X^*$ is closed in order to obtain (ii), because we have assumed
that $T$ and $S$ are mean ergodic.

\begin{theo} \label{jerome}
Let $T$ and $S$ be commuting mean ergodic contractions on a Banach space $X$ with $F(T)=F(S)$.
%Then $(I-T)X+(I-S)X$ is closed if and only if for every $z \in \overline{(I-T)(I-S)X}$ we have
%\begin{equation} \label{zero-var}
%\Big\|\frac1n \sum_{k=0}^{n-1}\sum_{j=0}^{n-1} T^kS^jz\Big\| \to 0.
%\end{equation}
Then the following are equivalent:

(i) $(I-T)X+(I-S)X$ is closed.

(ii) For every $z \in \overline{(I-T)(I-S)X}$ we have
\begin{equation} \label{zero-var}
\Big\|\frac1n \sum_{k=0}^{n-1}\sum_{j=0}^{n-1} T^kS^jz\Big\| \to 0.
\end{equation}

(iii) $\displaystyle{\frac1{n^2} \sum_{k=1}^n\sum_{j=1}^nT^kS^j}$ converges in operator-norm. 

(iv) $\displaystyle{\frac1{nm} \sum_{k=0}^{n-1}\sum_{j=1}^{m-1}T^kS^j}$ converges in 
operator-norm  as $\min(n,m) \to \infty$.
\smallskip

%then $T$ and $S$ are uniformly ergodic, and $(I-T)(I-S)X$ is closed.
When $(I-T)X+(I-S)X$ is closed, it equals $\overline{(I-T)(I-S)X}$, and the limit in (iii) is
the projection $E$ on $F(T)$ with $\ker(E) =\overline{(I-T)X}$.
\end{theo}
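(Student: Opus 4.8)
The plan is to reduce the theorem to Theorem \ref{squares-ue} together with Proposition \ref{degenerate}, exploiting the hypothesis $F(T)=F(S)$ through the identities
$$
\overline{(I-T)X}=\overline{(I-S)X}=\overline{(I-T)(I-S)X}=\overline{(I-T)X+(I-S)X}
$$
(see (\ref{closure})). Write $Y$ for this common closed subspace; it is $T$- and $S$-invariant, and by mean ergodicity of $T$ we have $X=F(T)\oplus Y$, with $E$ the bounded projection onto $F(T)$ having null space $Y$. Put $M_n:=A_n(T)A_n(S)=\frac1{n^2}\sum_{k=0}^{n-1}\sum_{j=0}^{n-1}T^kS^j$. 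Since $T^kS^j$ fixes $F(T)=F(S)$ pointwise, $M_n$ restricts to the identity on $F(T)$; and since $A_n(T)\to E_T$, $A_n(S)\to E_S$ strongly with $E_T=E_S=E$ (as $F(T)=F(S)$ forces equal ranges and kernels), $M_n\to E$ in the strong operator topology, so whenever $M_n$ converges in operator norm the limit must be $E$.

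First I would settle the equivalences not involving (ii). Because $T$ and $S$ are contractions, $\frac1{n^2}\sum_{k=1}^{n}\sum_{j=1}^{n}T^kS^j$ differs from $M_n$ by an operator of norm at most $4/n$; hence (iii) holds if and only if $M_n$ converges in operator norm, which is exactly condition (ii) of Theorem \ref{squares-ue}. Moreover (iv) is literally condition (iii) of Theorem \ref{squares-ue}. Therefore (i)$\,\Leftrightarrow\,$(iii)$\,\Leftrightarrow\,$(iv) is immediate from Theorem \ref{squares-ue}.

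Next, (i)$\,\Rightarrow\,$(ii): if $(I-T)X+(I-S)X$ is closed, then it equals $Y=\overline{(I-T)(I-S)X}$, so every $z$ in the latter is of the form $z=(I-T)x+(I-S)y$, and Proposition \ref{degenerate} gives $\|\frac1n\sum_{k=0}^{n-1}\sum_{j=0}^{n-1}T^kS^jz\|\to 0$. The remaining — and crucial — implication is (ii)$\,\Rightarrow\,$(iii). The operators $B_n:=nM_n=\frac1n\sum_{k=0}^{n-1}\sum_{j=0}^{n-1}T^kS^j$ map $Y$ into $Y$, and by (ii) (recalling $\overline{(I-T)(I-S)X}=Y$) each sequence $(B_nz)_{n\ge1}$ converges to $0$, hence is bounded, for every $z\in Y$. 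The uniform boundedness principle applied to $\{B_n|_Y\}\subset B(Y)$ then yields $\sup_n\|B_n|_Y\|<\infty$, i.e. $\|M_n|_Y\|=O(1/n)\to 0$. As $M_n$ is the identity on $F(T)$ and $X=F(T)\oplus Y$, this gives $\|M_n-E\|\le\|I-E\|\,\|M_n|_Y\|\to 0$, so $M_n$, and hence $\frac1{n^2}\sum_{k=1}^n\sum_{j=1}^nT^kS^j$, converges in operator norm; that is (iii). Finally, when (i) holds, $(I-T)X+(I-S)X=\overline{(I-T)(I-S)X}$ as noted, and by the first paragraph the operator-norm limit in (iii) is $E$, the projection onto $F(T)$ along $Y=\overline{(I-T)X}$, i.e. the mean ergodic projection of $T$ with $\ker E=\overline{(I-T)X}$.

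The main obstacle is the implication (ii)$\,\Rightarrow\,$(iii): the family $B_n=nM_n$ is a priori only bounded of order $n$, so the pointwise statement in (ii) carries genuine content, and it is precisely the uniform boundedness principle that converts it into the uniform decay rate $\|M_n|_Y\|=O(1/n)$ needed to upgrade to operator-norm convergence. This is the two-parameter analogue of the $n^{-\alpha}$ trick ($1<\alpha<2$) used in the proof of Theorem \ref{squares-ue}.
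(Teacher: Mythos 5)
Your proof is correct and follows essentially the same route as the paper: the closure identities from $F(T)=F(S)$, Proposition \ref{degenerate} for (i)$\Rightarrow$(ii), Banach--Steinhaus on $Y=\overline{(I-T)X}$ for (ii)$\Rightarrow$(iii), and Theorem \ref{squares-ue} for the equivalence of (i), (iii), (iv). You merely make explicit a few details the paper leaves implicit (the index shift between $\sum_{k=1}^{n}$ and $\sum_{k=0}^{n-1}$, the passage from $\|M_n|_Y\|\to 0$ to $\|M_n-E\|\to 0$ via $X=F(T)\oplus Y$, and the identification of the limit through strong convergence).
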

\begin{proof}
by \cite[Remark 2.5]{CL}, the assumption $F(T)=F(S)$ implies 
\begin{equation} \label{Closure}
\overline{(I-T)X}=\overline{(I-S)X}= \overline{(I-T)(I-S)X}.
\end{equation}
\smallskip

Assume (i). By (\ref{Closure}) 
$\overline{(I-T)X + (I-S)X} = \overline{(I-T)X}=\overline{(I-T)(I-S)X}$,
so when $(I-T)X+(I-S)X$ is closed Proposition \ref{degenerate}  yields  (\ref{zero-var})
for every $z \in \overline{(I-T)(I-S)X}$.
\smallskip

Assume (ii). Put $Y=\overline{(I-T)X}$, and assume that (\ref{zero-var}) holds for every 
$z \in Y$.  By (\ref{Closure}), $Y$ is $T$ and $S$ invariant, so we 
restrict ourselves to $Y$. Since
$\sup_n \|\frac1n \sum_{k=0}^{n-1}\sum_{j=0}^{n-1} T^kS^jz\| < \infty$ for every $z \in Y$,
 by the Banach-Steinhaus theorem
$\sup_n \|\frac1n \sum_{k=0}^{n-1}\sum_{j=0}^{n-1} T^kS^j\|_Y < \infty$. Hence
$\|\frac1{n^2} \sum_{k=0}^{n-1}\sum_{j=0}^{n-1} T^kS^j\|_Y  \to 0$. let $E$ be the ergodic 
projection of $T$ on $Y$; then (iii) holds with $E$ the limit.
\smallskip

By Theorem \ref{squares-ue}, (i), (iii) and (iv) are equivalent. 
\end{proof}

%Assume (iii). With the above notation,  the limit is obviously zero on $Y$, 
%so for $n$ large enough
%$$
%A_n:= I_Y - \frac1{n^2} \sum_{k=0}^{n-1}\sum_{j=0}^{n-1} T^kS^j=
 %\frac1{n^2} \sum_{k=0}^{n-1}\sum_{j=0}^{n-1}(I- T^kS^j)
%$$
%is invertible  on $Y$. Since $(I-T^k)(I-S^j) =I-T^k +I-S^j -(I-T^kS^j)$, on $Y$ we have
%$$
%A_n=\frac1n\sum_{k=0}^{n-1}(I-T^k) +\frac1n\sum_{j=0}^{n-1}(I-S^j) -
%\frac1{n^2} \sum_{k=0}^{n-1} \sum_{j=0}^{n-1} (I-T^k)(I-S^j).
%$$
%Let $y \in Y$, and fix $n$ large. Then by invertibility of $A_n$ on $Y$ there exists $z \in Y$
%such that $A_nz=y$, and we obtain
%$$
%y=A_n z =
%\frac1n\sum_{k=0}^{n-1}(I-T^k)z +\frac1n\sum_{j=0}^{n-1}(I-S^j)z-
%\frac1{n^2} \sum_{k=0}^{n-1} \sum_{j=0}^{n-1} (I-T^k)(I-S^j)z.
%$$
%The first and last terms of the right hand side are in $(I-T)X$, the middle one is in $(I-S)X$.
%Thus $Y \subset (I-T)X + (I-S)X \subset Y$, with  the last inclusion by (\ref{Closure}).
%\end{proof}

{\bf Remark.} If in the theorem $T$ is uniformly ergodic, then $(I-T)X+(I-S)X$ is closed, 
since $(I-T)X$ is closed, and
$$
(I-T)X \subset (I-T)X+(I-S)X \subset \overline{(I-T)X}=(I-T)X.
$$

%2. Mbekhta and Vasilescu \cite{MV} proved, for $T$ and $S$ not necessarily mean ergodic and
%without the assumption $F(T)=F(S)$, that (for $T,S$ power-bounded) a necessary and sufficient 
%condition for the operator-norm convergence of 
%$\frac1{nm} \sum_{k=0}^{n-1}\sum_{j=0}^{m-1} T^kS^j$, as $\min(n,m) \to \infty$, is that 
%both $(I-T)X+(I-S)X$ and $(I-T^*)X^* +(I-S^*)X^*$ be closed in $X$ and $X^*$, respectively.

%3. Depauw \cite[p. 168]{De} proved that for $T$ and $S$ induced on $L_2(\mathbb T)$
%by two irrational rotations, there exists $f \in L_2$ with integral zero which cannot be 
%represented as $f =(I-T)g+(I-S)h$ with $g,h \in L_2$. This yields, by Theorem \ref{jerome},
%that there exists $\phi \in L_2(\mathbb T)$ with integral zero such that
%$ \limsup_n \|\frac1n \sum_{k=0}^{n-1}\sum_{j=0}^{n-1} T^kS^j\phi\| >0$.
\medskip

{\bf Examples.} 1. Let $\nu$ be an absolutely continuous probability on $\mathbb T$. By 
\cite[Theorem 3]{Bh} (see also \cite[Corollary 4.2]{AG}), $\|\nu^n -\lambda\| \to 0$ 
(in total variation norm), where $\lambda$ is the normalized Lebesgue measure on $\mathbb T$. 
Let $X=L_2(\mathbb T)$ and define $Tf=\nu*f$. Then $\|T^n-E\|_2 \to 0$ (where 
$Ef =\int f\,d\lambda$), so $T$ is uniformly ergodic. Let $S$ be induced on $L_2(\mathbb T)$ 
by a rotation by $\theta$ irrational. Then $F(T)=F(S)$ and $(I-T)X+(I-S)X$ is closed. 
Since $\sigma(S)=\mathbb T$, $\ S$ is not uniformly ergodic, so by Corollary 
\ref{equivalences} there is $z \in (I-T)X+(I-S)X =(I-T)X$ which is not a joint coboundary.
Note that by Lemma \ref{same}(i), every joint coboundary is a double coboundary.
\smallskip

2. On $[0,1)$ define $\theta t =2t \mod 1$ and $\tau t =3 t \mod 1$, and let $T$ and $S$
be the corresponding isometries induced on $L_2[0,1)$. Put $f(t)=\e^{2\pi i t}$. Then
orthogonality of the exponents yields
$$
\Big\| \frac1n \sum_{k=0}^{n-1}\sum_{j=0}^{n-1} T^kS^j f \Big\|_2^2 =
\frac1{n^2} \Big\| \sum_{k=0}^{n-1}\sum_{j=0}^{n-1}  \e^{2\pi i 2^k3^j t} \Big\|_2^2 = 1.
$$
Since $\int f(t)dt=0$, by Theorem \ref{jerome} $(I-T)L_2+(I-S)L_2$ is not closed in $L_2$.
\medskip

\begin{theo} \label{depauw}
Let $\theta$ and $\tau$ be commuting ergodic measure preserving transformations of a 
non-atomic probability space which generate a free $\mathbb N_0^2$ action.
For $1 \le p< \infty$, let $U$ and $V$ be their corresponding isometries induced on $L_p$. Then:

(i) $(I-U)L_p +(I-V)L_p$ is not closed (both for the real and for the complex $L_p$).

(ii) There exists a real function $f \in L_p$ with integral zero such that
$$\limsup_n \|\frac1n \sum_{k=0}^{n-1}\sum_{j=0}^{n-1} U^kV^j f\|_p >0.$$
\end{theo}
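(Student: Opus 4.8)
The plan is to prove (i) first and deduce (ii) from it by a uniform boundedness argument. For (i), note that $U$ and $V$ are commuting contractions of $L_p$ which are mean ergodic for every $1\le p<\infty$ (the mean ergodic theorem in $L_p$ for operators induced by measure preserving transformations), so by Theorem \ref{squares-ue} it suffices to show that $A_n(U)A_n(V)=\frac1{n^2}\sum_{k=0}^{n-1}\sum_{j=0}^{n-1}U^kV^j$ does \emph{not} converge in operator norm. Let $X_0:=\{f\in L_p:\int_\Omega f\,d\P=0\}$ and let $E_0$ be the bounded projection $E_0f=(\int_\Omega f\,d\P)\mathbf 1$ of $L_p$ onto the constants along $X_0$. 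By ergodicity of $\theta$ and $\tau$, $E_0$ is the common mean ergodic projection of $U$ and of $V$, and $A_n(U)A_n(V)\to E_0$ in the strong operator topology; hence $E_0$ is the only possible operator-norm limit. Since $\mathbf 1$ and $X_0$ are both $U$- and $V$-invariant, $A_n(U)A_n(V)-E_0$ vanishes on the constants and coincides with $A_n(U)A_n(V)$ on $X_0$, so it is enough to prove $\|A_n(U)A_n(V)|_{X_0}\|=\|A_n(U)A_n(V)-E_0\|\not\to 0$.

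Here I would use the joint spectrum, following Corollary \ref{alex2}. For $n\in\N$ put $\zeta_n:=\e^{\pi i/n}\in\T\setminus\{1\}$ and $D_n(z):=\frac1n\sum_{k=0}^{n-1}z^k=\frac{z^n-1}{n(z-1)}$, so that $D_n(\zeta_n)=\frac{-2}{n(\zeta_n-1)}$ and $|D_n(\zeta_n)|=\bigl(n\sin\frac{\pi}{2n}\bigr)^{-1}\to 2/\pi$. By the freeness hypothesis and the proof of Corollary \ref{alex2} (via Proposition \ref{torus} and the Rokhlin--Kakutani lemma for free $\N_0^2$ actions), for every $\zeta\in\T$ one has $(\zeta,\zeta)\in\sigma_\pi(U,V)$, witnessed by \emph{real} unit vectors $u_m$ (built from indicators of Rokhlin sets) with $\|(\zeta I-U)u_m\|_p+\|(\zeta I-V)u_m\|_p\to 0$. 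Fix $n$ and apply this with $\zeta=\zeta_n$. Writing
$$
D_n(s)D_n(t)-D_n(\zeta_n)^2=(s-\zeta_n)Q_1(s,t)+(t-\zeta_n)Q_2(t)
$$
for suitable polynomials $Q_1,Q_2$ (depending on $n$) and substituting $s=U$, $t=V$, the estimate $\|(\zeta_nI-U)u_m\|_p+\|(\zeta_nI-V)u_m\|_p\to 0$ gives $\|A_n(U)A_n(V)u_m-D_n(\zeta_n)^2u_m\|_p\to 0$ as $m\to\infty$; moreover $\|E_0u_m\|_p=|\int_\Omega u_m\,d\P|\to 0$, because $\zeta_n\ne 1$ and $(\zeta_n-1)\int_\Omega u_m\,d\P=\int_\Omega(\zeta_nI-U)u_m\,d\P$ by measure-preservation. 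Hence $D_n(\zeta_n)^2\in\sigma_\pi\bigl(A_n(U)A_n(V)|_{X_0}\bigr)$, so
$$
\|A_n(U)A_n(V)-E_0\|\ \ge\ |D_n(\zeta_n)|^2\ \longrightarrow\ 4/\pi^2\ >\ 0 .
$$
Thus $A_n(U)A_n(V)$ does not converge in operator norm, and by Theorem \ref{squares-ue} the subspace $(I-U)L_p+(I-V)L_p$ is not closed. All vectors, polynomials and operators above are real, so the argument is unchanged on the real $L_p$.

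For (ii), the displayed lower bound shows $\|A_n(U)A_n(V)|_{X_0}\|\ge c>0$ for all large $n$, whence $\sup_n\|R_n|_{X_0}\|=\infty$, where $R_n:=\frac1n\sum_{k=0}^{n-1}\sum_{j=0}^{n-1}U^kV^j=n\,A_n(U)A_n(V)$ (so that $\|R_n|_{X_0}\|=n\|A_n(U)A_n(V)|_{X_0}\|\ge cn$). Applying the Banach--Steinhaus theorem to the operators $R_n$ on the real Banach space $X_0$, there is $f\in X_0$, i.e. a real function $f\in L_p$ with $\int_\Omega f\,d\P=0$, such that $\sup_n\|R_nf\|_p=\infty$; in particular $\limsup_n\|\frac1n\sum_{k=0}^{n-1}\sum_{j=0}^{n-1}U^kV^jf\|_p>0$.

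The substantive difficulty is entirely in part (i). Because $\theta$ and $\tau$ are not assumed invertible, only the approximate point spectrum is directly available (Corollary \ref{alex2} provides $\T^2\subseteq\sigma_\pi(U,V)$ rather than the full Harte spectrum), so the approximate eigenvectors must be carried both through the polynomial $A_n(\cdot)A_n(\cdot)$ and through the reduction to $X_0$ (the perturbation by $E_0$) — this is the heart of the matter, and is where the freeness of the action, hence the Rokhlin--Kakutani lemma, is used. The reduction of (ii) to (i) via Banach--Steinhaus, and the verification that $U$ and $V$ are mean ergodic on $L_p$ for every $1\le p<\infty$ (needed to invoke Theorem \ref{squares-ue}, including the case $p=1$), are routine.
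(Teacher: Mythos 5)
Your argument for the complex $L_p$ is correct, and it takes a genuinely different route from the paper for the heart of part (i). The paper argues by contradiction: if the sum were closed (first in the real, hence in the complex $L_p$), Theorem \ref{jerome} gives operator-norm convergence of $\frac1{n^2}\sum_{k,j<n}U^kV^j$ to the ergodic projection, Corollary \ref{notin-sp} then excludes $(1,1)$ from the Harte spectrum of the restrictions to the mean-zero subspace, and the direct-sum inclusion (\ref{direct}) makes $(1,1)$ an isolated point of $\sigma(U,V)$, contradicting Corollary \ref{alex2}. You instead avoid Corollary \ref{notin-sp} and the isolated-point argument altogether: using only the approximate joint eigenvectors supplied by the Rokhlin--Kakutani construction behind Proposition \ref{torus}/Corollary \ref{alex2}, evaluated at the moving point $(e^{i\pi/n},e^{i\pi/n})$, you get the explicit quantitative bound $\liminf_n\|A_n(U)A_n(V)-E_0\|\ge 4/\pi^2$, and then invoke only the elementary direction of Theorem \ref{squares-ue} (closed $\Rightarrow$ uniform convergence, proved via Banach--Steinhaus, no spectral theory). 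The polynomial factorization of $D_n(s)D_n(t)-D_n(\zeta_n)^2$, the estimate $\|E_0u_m\|\to0$ via $(\zeta_n-1)\int u_m=\int(\zeta_nI-U)u_m$, and the passage to the restriction to $X_0$ are all sound (note only that $\|A_n(U)A_n(V)-E_0\|$ equals $\|A_n(U)A_n(V)|_{X_0}\|$ only up to a bounded factor, but the inequality you actually use, $\|A_n(U)A_n(V)-E_0\|\ge\|A_n(U)A_n(V)|_{X_0}\|$, is trivially true). Your derivation of (ii) by Banach--Steinhaus is in substance the implication already packaged in Theorem \ref{jerome}, which is how the paper gets (ii). What your route buys is a more elementary and quantitative proof; what the paper's route buys is a soft argument that localizes everything at the single point $(1,1)$ of the joint spectrum.

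The one genuine flaw is your treatment of the real $L_p$. The approximate eigenvectors of Proposition \ref{torus} are \emph{not} real: they are the combinations $\sum_{k,j}\lambda^{n-k}\nu^{n-j}U^kV^j 1_{E_n}$ with complex coefficients, and no real vector can approximately witness a non-real joint eigenvalue, since for real $u$ one has $|(\zeta I-U)u|\ge|\operatorname{Im}\zeta|\,|u|$ pointwise, hence $\|(\zeta_nI-U)u\|_p\ge|\operatorname{Im}\zeta_n|\,\|u\|_p$ with $\zeta_n=e^{i\pi/n}\ne\pm1$. So the sentence ``all vectors, polynomials and operators above are real'' is false, and as written your proof only establishes (i) for the complex $L_p$, and your Banach--Steinhaus step for (ii) is carried out on a real $X_0$ on which you have not yet proved the norm blow-up. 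The repair is short and is exactly the paper's own device, run in the contrapositive: if the real sum $(I-U)L_p+(I-V)L_p$ were closed, its complexification would be closed (equivalently, norm convergence of the real operators $A_n(U)A_n(V)$ would transfer to their complexifications), contradicting your complex estimate; and for (ii), either split the complex $f$ produced by Banach--Steinhaus into real and imaginary parts, or note $\|R_n|_{X_0^{\mathbb C}}\|\le 2\,\|R_n|_{X_0^{\mathbb R}}\|$ so the real restricted norms also blow up. With that one-line complexification argument inserted, your proof is complete.
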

\begin{proof}
Assume that $(I-U)L_p+(I-V)L_p$ is closed, for the real $L_p$. Then it is also closed in 
the complex $L_p$, and we apply in that space Theorem  \ref{jerome}, which yields that
$\frac1{n^2}\sum_{k=0}^{n-1}\sum_{j=0}^{n-1} U^kV^j$ converges in operator norm, with 
the limit $E$ a projection onto $F:=F(U)=F(V)=\{complex \ constants\}$ with null space 
$Y:= (I-U)L_p +(I-V)L_p = \{f \in L_p: \int f=0\}$. 
By Corollary \ref{notin-sp}, $(1,1)$ is not in $\sigma(U_Y,V_Y)$, where $U_Y$ and 
$V_Y$ are the restrictions to $Y$.  The restrictions of $U$ and $V$ to $F(U)$ are both the 
identity, and for any complex Banach space $X$ we have $\sigma(I_X,I_X)=\{(1,1)\}$. 
Since $L_p=F(U)\oplus Y$,  by  (\ref{direct}) we have
%by \cite[Lemmas 4.4 and 4.5]{Cu1} (with $B=0$, see also \cite[Lemma 1.2]{Ta}), we have
$$
\sigma(U,V) \subset  \sigma(I_F,I_F) \cup \sigma(U_Y,V_Y) = \{(1,1)\} \cup \sigma(U_Y,V_Y).
%\sigma_{\mathbf T}(U,V) = \sigma_{\mathbf T}(I_F,I_F) \cup \sigma_{\mathbf T}(U_Y,V_Y) =
%\{(1,1)\} \cup \sigma_{\mathbf T}(U_Y,V_Y).
$$
Since the joint spectrum is closed, there is a neighborhood of $(1,1)$ 
which is not in $\sigma(U_Y,V_Y)$, so $(1,1)$ is isolated in $\sigma(U,V)$, a contradiction 
to Corollary \ref{alex2}. Hence (i) holds in the complex $L_p$ and therefore also 
in the real $L_p$. Hence (ii) of Theorem \ref{jerome} fails, which yields (ii) of our theorem.
\end{proof}

{\bf Remarks.} 1. The research leading to Theorem \ref{depauw} was motivated by the result of
Depauw \cite[p. 168]{De}, who proved that for $U$ and $V$ induced on $L_2(\mathbb T)$ by two 
irrational rotations, there exists $f \in L_2$ with integral zero which cannot be represented 
as $f =(I-U)g+(I-V)h$ with $g,h \in L_2$; this is (i) of Theorem \ref{depauw} for $p=2$.
\smallskip

2. Theorem \ref{jerome} yields directly the result for $p=2$, without the theory of joint spectra,
when there exists $0\ne f \in L_2$ with integral zero, such that {\it all} the orbit is 
orthogonal, i.e. the functions $\{ U^kV^jf: k\ge 0,\ j\ge 0\}$ are orthogonal. 
See the second example following Theorem \ref{jerome}.
\medskip

Derriennic and Lin \cite{DL} introduced the notion of fractional coboundaries of contractions.
Let $0<a<1$, and let $(1-t)^a = 1 -\sum_{j=1}^\infty a_j t^j$. It is known that $a_j >0$ with 
$\sum a_j=1$ , so for any contraction $T$ on a Banach space we can define the operator
$(I-T)^a =I -\sum_{j=1}^\infty a_jT^j$. The elements of $(I-T)^a X$ were called in \cite{DL} 
{\it fractional coboundaries}. If $T$ is not uniformly ergodic (i.e. $(I-T)X$ not closed), 
then the spaces $(I-T)^aX \subset (I-T)^bX$ for $0<b<a \le 1$ are all different, with closure 
$\overline{(I-T)X}$.

\begin{theo}
Let $T$ and $S$ be commuting mean ergodic contractions on a Banach space $X$ with $F(T)=F(S)$,
and let  $0\le a \le 1$. If $z := (I-T)^a(I-S)^{1-a}x$, then 
$\|\frac1n \sum_{k=0}^{n-1}\sum_{j=0}^{n-1} T^kS^j z \| \to 0$. 
\end{theo}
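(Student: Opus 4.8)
The plan is to adapt the density argument used for Proposition~\ref{degenerate} (to which this statement reduces when $a=1$, giving $z=(I-T)x$, or $a=0$, giving $z=(I-S)x$). Since $T$ and $S$ commute, the quantity to be estimated is $B_n x$, where
$$
B_n:=n\,A_n(T)A_n(S)(I-T)^a(I-S)^{1-a},\qquad A_n(R):=\tfrac1n\textstyle\sum_{k=0}^{n-1}R^k,
$$
and, for $0<b<1$, the fractional coboundary operator $(I-R)^b=I-\sum_{j\ge1}a_j(b)R^j$ of \cite{DL} (with $a_j(b)>0$, $\sum_j a_j(b)=1$) is a norm limit of polynomials in $R$, hence commutes with $A_n(S)$, $T$ and $S$, and satisfies $\|(I-R)^b\|\le2$. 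First I would show $\sup_n\|B_n\|<\infty$; then $D:=\{x\in X:\ B_nx\to0\}$ is a closed subspace, and it remains to check that $D$ contains a dense subspace of $X$.

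For the uniform bound, write $n=n^{a}\cdot n^{1-a}$ and regroup:
$$
B_n=\bigl[n^{a}A_n(T)(I-T)^{a}\bigr]\bigl[n^{1-a}A_n(S)(I-S)^{1-a}\bigr].
$$
So everything comes down to the estimate: \emph{for a contraction $R$ and $0\le b\le1$, $\|A_n(R)(I-R)^{b}\|\le C_b\,n^{-b}$.} For $b=0$ this is $\|A_n(R)\|\le1$; for $b=1$ it is $\|A_n(R)(I-R)\|=\tfrac1n\|I-R^n\|\le 2/n$; for $0<b<1$ it is the rate estimate for fractional coboundaries in \cite{DL} (or one computes the coefficient of $R^i$ in $nA_n(R)(I-R)^{b}$ and uses $a_j(b)\sim c_b\,j^{-1-b}$ to sum the absolute values to $O(n^{1-b})$). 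Each bracket above then has norm $O(1)$, so $\sup_n\|B_n\|<\infty$ and $D$ is closed.

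Next I would exhibit the dense subspace. If $x\in F(T)=F(S)$, then whichever of $(I-T)^{a}$, $(I-S)^{1-a}$ has positive exponent kills $x$ (and if $a\in\{0,1\}$ the genuine factor $I-T$ or $I-S$ does), so $(I-T)^a(I-S)^{1-a}x=0$ and $B_nx=0$; hence $F(T)\subset D$. If $x=(I-T)(I-S)y$, then
$$
B_nx=n\bigl[A_n(T)(I-T)(I-T)^{a}\bigr]\bigl[A_n(S)(I-S)(I-S)^{1-a}\bigr]y,
$$
and $\|A_n(R)(I-R)(I-R)^{b}\|=\tfrac1n\|(I-R^n)(I-R)^{b}\|\le \tfrac1n\cdot2\cdot2=\tfrac4n$; thus $\|B_nx\|\le n\cdot\tfrac4n\cdot\tfrac4n\|y\|\to0$, so $(I-T)(I-S)X\subset D$. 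Since $D$ is closed, it contains $\overline{(I-T)(I-S)X}$, which equals $\overline{(I-T)X}$ because $F(T)=F(S)$ (\cite[Remark~2.5]{CL}); and $F(T)$ is closed, so $D\supseteq F(T)+\overline{(I-T)X}$. By mean ergodicity of $T$ (equivalently, the ergodic decomposition (\ref{dec}) applied to a single operator), $X=F(T)\oplus\overline{(I-T)X}$, so $D=X$, which is the assertion.

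The only non-routine point is the fractional rate estimate $\|A_n(R)(I-R)^{b}\|=O(n^{-b})$; once it is available — from \cite{DL}, or from the elementary generating-function computation sketched above — the remainder is just the ergodic decomposition together with the identity $\overline{(I-T)X}=\overline{(I-S)X}=\overline{(I-T)(I-S)X}$ valid whenever $F(T)=F(S)$, which is used repeatedly throughout the paper.
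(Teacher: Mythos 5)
Your proof is correct, but it takes a genuinely different route from the paper's. The paper handles $0<a<1$ directly on the given vector: it quotes \cite[Corollary 2.15]{DL} to get, via Banach--Steinhaus, the uniform bound $\sup_n\|n^{-(1-a)}\sum_{k=0}^{n-1}T^k(I-T)^a\|<\infty$, and then applies the same DL result strongly to $S$ on the specific vector $(I-S)^{1-a}x$, so that the double average is dominated by a constant times $\|n^{-a}\sum_{j=0}^{n-1}S^j(I-S)^{1-a}x\|\to 0$; no density argument is needed, and (as the paper remarks) the hypothesis $F(T)=F(S)$ is not used at all when $0<a<1$, only for the extreme cases $a\in\{0,1\}$ via Proposition \ref{degenerate}. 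You instead prove the operator-norm rate $\|A_n(R)(I-R)^b\|\le C_b n^{-b}$ for both factors (which is fine: your generating-function computation with $(1-t^n)(1-t)^{b-1}$, whose coefficient absolute values sum to $O(n^{1-b})$, is valid for any contraction and needs no mean ergodicity), deduce $\sup_n\|B_n\|<\infty$, and then run a closed-subspace/density argument: $F(T)\subset D$, $(I-T)(I-S)X\subset D$ with rate $O(1/n)$, and $D=X$ by the identity $\overline{(I-T)X}=\overline{(I-S)X}=\overline{(I-T)(I-S)X}$ from \cite[Remark 2.5]{CL} together with $X=F(T)\oplus\overline{(I-T)X}$. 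What each approach buys: yours is more self-contained (only coefficient asymptotics of $(I-R)^b$, no appeal to the DL convergence theorem) and yields a clean uniform bound on the operators $nA_n(T)A_n(S)(I-T)^a(I-S)^{1-a}$; the paper's is shorter and, for $0<a<1$, does not require $F(T)=F(S)$, whereas your density step genuinely uses that hypothesis (and mean ergodicity of both operators) even in the interior range of $a$. Since the theorem assumes $F(T)=F(S)$, this costs nothing here, but it does hide the extra generality the paper points out in its closing remark.
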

\begin{proof}
Proposition \ref{degenerate}, which requires $F(T)=F(S)$, yields the extreme cases $a=0$ and $a=1$, 
so we assume $0<a<1$. It was proved in \cite[Corollary 2.15]{DL} That if $T$ is a mean ergodic 
contraction and $y \in (I-T)^aX$, then $\| (1/n^{1-a}) \sum_{k=1}^n T^ky\| \to 0$. It follows that
\begin{equation} \label{uniform-bound}
\sup_{n\ge 1} \Big \| \frac 1{n^{1-a}} \sum_{k=0}^{n-1} T^k(I-T)^a \Big\| = K < \infty.
\end{equation}
If $x \in F(T)$, then $(I-T)^ax=0$, so we may assume $E_Tx=0$, i.e $x \in \overline{(I-T)X}$.
%Since $F(T)=F(S)$, by \cite[Remark 2.5]{CL}, $\overline{(I-T)X}=\overline{(I-S)X}$, so also $E_Sx=0$.
By (\ref{uniform-bound}) and an application of \cite[Corollary 2.15]{DL} to $S$ we obtain
$$
\Big\|\frac1n \sum_{k=0}^{n-1}\sum_{j=0}^{n-1} T^kS^j z \Big\| =
\Big\|\frac1{n^a} \frac1{n^{1-a}}\sum_{k=0}^{n-1}\sum_{j=0}^{n-1} T^kS^j (I-T)^a(I-S)^{1-a}x\Big\| \le
$$
$$
K \Big\| \frac1{n^a} \sum_{j=0}^{n-1}S^j(I-S)^{1-a} x \Big\| \to 0.
$$
Note that for $0<a<1$, the assumption $F(T)=F(S)$ is not used.
\end{proof}

{\bf Remark.} By the theorem, if
$z =\sum_{\ell=1}^L (I-T)^{a_\ell}(I-S)^{1-a_\ell} x_\ell$, with $0\le a_1 <a_2 <\dots a_L \le 1$,
then $\|\frac1n \sum_{k=0}^{n-1}\sum_{j=0}^{n-1} T^kS^j z \| \to 0$.

\begin{cor}
Let $\theta$ and $\tau$ be commuting ergodic measure preserving transformations of the 
probabilty space $(\Omega, \mathcal B,\mathbb P)$, and let $f \in L_2(\mathbb P)$ with 
$\int_\Omega f \ d\mathbb P=0$. Let $T$ and $S$ be the corresponding Koopman operators. If
$$
f =\sum_{\ell=1}^L (I-T)^{a_\ell}(I-S)^{1-a_\ell} g_\ell,
$$
with $0\le a_1 <a_2 <\dots a_L \le 1$  and $g_1,\dots,g_L \in L_2$, then
$\|\frac1n \sum_{k=0}^{n-1}\sum_{j=0}^{n-1} f(\theta^k\tau^j \omega)\|_2 \to 0$. 
\end{cor}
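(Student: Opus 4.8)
The plan is to read off the corollary as an immediate specialization of the theorem just established, together with the remark following it. First I would record the standing facts about the Koopman operators: since $\theta$ and $\tau$ are invertible measure preserving transformations, $T$ and $S$ are commuting unitary operators on $L_2(\mathbb P)$, in particular power-bounded contractions on a reflexive space, hence mean ergodic; moreover $(T^kS^jf)(\omega)=f(\theta^k\tau^j\omega)$, so the average in the statement is, as an element of $L_2$, exactly $\frac1n\sum_{k=0}^{n-1}\sum_{j=0}^{n-1}T^kS^jf$. The fractional powers $(I-T)^{a_\ell}$ and $(I-S)^{1-a_\ell}$ are well defined by the norm-convergent binomial series recalled before the theorem, because $T$ and $S$ are contractions.

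Next I would verify the hypothesis $F(T)=F(S)$ demanded by the theorem. Ergodicity of $\theta$ forces $F(T)$ to consist of the constant functions only, and ergodicity of $\tau$ does the same for $F(S)$; hence $F(T)=F(S)=\{\text{constants}\}$. (For the summands with $0<a_\ell<1$ this identification is not even needed, by the remark in the proof of the theorem; it is used only for the endpoint exponents $a_\ell\in\{0,1\}$, through Proposition \ref{degenerate}.) I would also note, though it is not strictly necessary, that the hypothesis $\int_\Omega f\,d\mathbb P=0$ is automatic from the representation: measure-invariance of $T$ gives $\int_\Omega (I-T)^a h\,d\mathbb P=\bigl(1-\sum_{j=1}^\infty a_j\bigr)\int_\Omega h\,d\mathbb P=0$ for $a>0$, and for $a_\ell=0$ or $a_\ell=1$ one of the two factors is $I-S$ or $I-T$, whose range also integrates to zero.

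Finally I would invoke the remark immediately after the theorem, which asserts that if $z=\sum_{\ell=1}^L (I-T)^{a_\ell}(I-S)^{1-a_\ell}x_\ell$ with $0\le a_1<a_2<\dots<a_L\le 1$ and $F(T)=F(S)$, then $\|\frac1n\sum_{k=0}^{n-1}\sum_{j=0}^{n-1}T^kS^jz\|\to 0$. Applying this with $z=f$ and $x_\ell=g_\ell$ yields $\|\frac1n\sum_{k=0}^{n-1}\sum_{j=0}^{n-1}f(\theta^k\tau^j\omega)\|_2\to 0$, which is the claim. Since every ingredient---Proposition \ref{degenerate} for the extreme exponents and the Derriennic--Lin estimate from \cite{DL} for the interior ones---is already in place, there is essentially no genuine obstacle here; the only points deserving a line of argument are the identification $F(T)=F(S)=\{\text{constants}\}$ via ergodicity and, if one wishes to address it, the redundancy of the mean-zero assumption.
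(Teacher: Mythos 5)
Your argument is correct and is exactly the paper's intended one: the corollary is stated without separate proof, as an immediate application of the preceding theorem and its remark, once one notes that the Koopman operators are commuting contractions on the reflexive space $L_2$ (hence mean ergodic) with $F(T)=F(S)=\{\text{constants}\}$ by ergodicity. Your extra observations (invertibility of $\theta,\tau$ is not needed since mean ergodicity already follows from reflexivity, and the mean-zero hypothesis is automatic from the representation) are accurate but not essential.
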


\bigskip

\section{\sc Appendix: The uniform ergodic theorem for commuting contractions}
\medskip

Mbekhta and Vasilescu \cite{MV} extended the uniform ergodic theorem of \cite{L} to $d$ 
commuting operators on a {\it complex} Banach space. A special case of their result is 
the following.
\begin{theo} \label{mv}
Let $T$ and $S$ be commuting power-bounded operators on a {\em complex} Banach space $X$.
Then the following are equivalent:

(i) $(I-T)X+(I-S)X$ and $(I-T^*)X^*+ (I-S^*)X^*$ are closed in $X$ and $X^*$, respectively.

(ii) $\displaystyle{ \frac1{nm}\sum_{k=0}^{n-1}\sum_{j=0}^{m-1} T^kS^j}$ converges in 
operator-norm, as $\min(n,m) \to \infty$.
\end{theo}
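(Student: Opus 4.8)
The plan is to prove the two implications separately, following the template of the single-operator uniform ergodic theorem of \cite{L}. Write $C:=\sup_n\max(\|T^n\|,\|S^n\|)<\infty$, $M:=(I-T)X+(I-S)X$, $N:=F(T)\cap F(S)$, and (so that $I-A_n(T)=(I-T)B_n$) set $B_n:=\frac1n\sum_{k=0}^{n-1}\sum_{i=0}^{k-1}T^i$, with $C_m$ defined analogously from $S$. A preliminary observation, used in both directions, is that $A_n(T)A_m(S)\to 0$ strongly on $\overline M$: on the dense subspace $M$ this follows from $A_n(T)A_m(S)(I-T)=\frac1n(I-T^n)A_m(S)$ and $A_n(T)A_m(S)(I-S)=\frac1m A_n(T)(I-S^m)$, and the bound $\|A_n(T)A_m(S)\|\le C^2$ extends it to $\overline M$; in particular $N\cap\overline M=\{0\}$, since a vector of the intersection is fixed by every average and is also their strong limit.

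For \emph{(ii)$\Rightarrow$(i)}, let $E:=\lim_{\min(n,m)\to\infty}A_n(T)A_m(S)$ in operator norm. From $TA_n(T)A_m(S)=A_n(T)A_m(S)+\frac1n(T^n-I)A_m(S)$ we get $TE=E$, and similarly $SE=E$; since every $A_n(T)A_m(S)$ commutes with $T$ and $S$, so does $E$, and $E$ fixes $N$ pointwise, so $E$ is a bounded projection onto $N$ with $E(I-T)=E(I-S)=0$. The identity $I-T^kS^j=(I-T)\sum_{i<k}T^i+T^k(I-S)\sum_{i<j}S^i$ gives $x-A_n(T)A_m(S)x\in M$ for all $x$, hence $\ker E=\overline M$. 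The key step is that $\|A_n(T)A_m(S)|_{\ker E}\|=\|A_n(T)A_m(S)-E\|\to0$, so for $n,m$ large $I-A_n(T)A_m(S)$ is invertible on $\ker E$, say with inverse $W$; writing $I-A_n(T)A_m(S)=(I-T)B_n+A_n(T)(I-S)C_m$ and noting that $B_n$, $C_m$, $A_n(T)$ commute with $E$, hence preserve $\ker E$, every $z\in\ker E$ satisfies $z=(I-T)(B_nWz)+(I-S)(C_mA_n(T)Wz)\in M$. Thus $M=\ker E$ is closed. Finally $(A_n(T)A_m(S))^{*}=A_n(T^{*})A_m(S^{*})$ converges in operator norm and $T^{*},S^{*}$ are commuting and power-bounded, so the same argument on $X^{*}$ shows $(I-T^{*})X^{*}+(I-S^{*})X^{*}$ is closed; this is (i).

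For \emph{(i)$\Rightarrow$(ii)}, I would first reduce everything to the topological direct sum decomposition $X=N\oplus M$. Granting it, let $P$ be the projection onto $M$ along $N$, bounded by the closed graph theorem; since $A_n(T)A_m(S)$ is the identity on $N$ we have $A_n(T)A_m(S)=(I-P)+A_n(T)A_m(S)P$, and because $M$ is closed the open mapping theorem writes each $z\in M$ as $z=(I-T)u+(I-S)v$ with $\|u\|+\|v\|\le K\|z\|$, whence $\|A_n(T)A_m(S)z\|\le\frac{(1+C)CK}{\min(n,m)}\|z\|$; therefore $\|A_n(T)A_m(S)-(I-P)\|\le\frac{(1+C)CK\|P\|}{\min(n,m)}\to0$, which is (ii). To obtain the decomposition one uses the closed range theorem: $M$ closed gives $M={}^{\perp}(F(T^{*})\cap F(S^{*}))$, while applying the theorem to the map $x\mapsto((I-T)x,(I-S)x)$ and its adjoint, closedness of $(I-T^{*})X^{*}+(I-S^{*})X^{*}$ yields both $(I-T^{*})X^{*}+(I-S^{*})X^{*}=N^{\perp}$ and a bound $\|(I-T)x\|+\|(I-S)x\|\ge c\operatorname{dist}(x,N)$; combined with $N\cap\overline M=\{0\}$ and the same reasoning applied one level up to $T^{*},S^{*}$ on $X^{*}$, this should force $N+M=X$.

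The hard part is precisely this decomposition $X=N\oplus M$ in the direction (i)$\Rightarrow$(ii): unlike in the one-operator case, closedness of $M$ in $X$ alone is not enough, and one must genuinely exploit the hypothesis on $X^{*}$; closing the gap between "$N+M$ is dense" and "$N+M=X$" seems to require either a quantitative positive-angle estimate between $N$ and $M$ squeezed out of the two closed-range conditions, or a more structural route through the joint (Taylor or Harte) spectrum of $(T,S)$ and a Kato-type decomposition at the point $(1,1)$, in the spirit of the spectral arguments used elsewhere in this paper. By contrast, the direction (ii)$\Rightarrow$(i) is routine and in fact \emph{produces} the decomposition and the $X^{*}$-side closedness for free, which is exactly why hypothesis (i) must be stated two-sidedly.
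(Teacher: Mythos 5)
Your direction (ii)$\Rightarrow$(i) is correct and complete, and it is essentially the same elementary, non-spectral argument that the paper itself uses for the corresponding implication (iii)$\Rightarrow$(i) of Theorem \ref{mv-real} (and Corollary \ref{notin-sp}): norm convergence makes $I-A_n(T)A_m(S)$ invertible on the kernel of the limit projection, the factorization $I-A_n(T)A_m(S)=(I-T)B_n+(I-S)A_n(T)C_m$ then exhibits every element of that kernel as an element of $(I-T)X+(I-S)X$, and the adjoint averages give the dual statement. Bear in mind that the paper never proves Theorem \ref{mv} itself: it quotes it from \cite{MV}, whose proof of the hard direction is spectral-theoretic. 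Your (i)$\Rightarrow$(ii), however, is not a proof. Everything is made to hinge on the topological decomposition $X=N\oplus M$, and the closed-range manipulations you list only yield $M={}^{\perp}\bigl(F(T^*)\cap F(S^*)\bigr)$, $N^{\perp}=(I-T^*)X^*+(I-S^*)X^*$, density of $N+M$ and $N\cap M=\{0\}$; they do not yield closedness of $N+M$ (equivalently a positive angle, i.e.\ boundedness of the projection), and ``the same reasoning one level up'' merely transfers the identical question to $X^{**}$. So the implication that carries the substance of the theorem is left open, as your own ``should force'' concedes.

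The irony is that the Lin template from \cite{L} which you announce at the outset does close this gap directly, without producing the decomposition first and in fact without the dual half of (i). Write $D_{n,m}:=A_n(T)A_m(S)$. Your identity $I-D_{n,m}=(I-T)B_n+(I-S)A_n(T)C_m$ shows $\operatorname{ran}(I-D_{n,m})\subset M$ for all $n,m$. Since $M$ is closed, the open mapping theorem applied to $(u,v)\mapsto(I-T)u+(I-S)v$ gives a constant $K$ such that every $z\in M$ admits a representation with $\|u\|+\|v\|\le K\|z\|$, whence $\|D_{n,m}z\|\le C(1+C)K\|z\|/\min(n,m)$, i.e.\ $\|D_{n,m}|_M\|\to 0$. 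Now the commutation identity $D_{n,m}-D_{n',m'}=D_{n,m}(I-D_{n',m'})-(I-D_{n,m})D_{n',m'}$ gives $\|D_{n,m}-D_{n',m'}\|\le(1+C^2)\bigl(\|D_{n,m}|_M\|+\|D_{n',m'}|_M\|\bigr)$, so the family $D_{n,m}$ is uniformly Cauchy as $\min(n,m)\to\infty$ and converges in operator norm; that the limit is the projection onto $F(T)\cap F(S)$ along $M$ then follows exactly as in your (ii)$\Rightarrow$(i) argument, and the decomposition $X=N\oplus M$ comes out a posteriori. In particular your closing diagnosis --- that closedness of $M$ alone cannot suffice and that the $X^*$ hypothesis must drive (i)$\Rightarrow$(ii) --- is mistaken: the dual condition in (i) is there because it is part of what (ii) implies (your correct direction), not because it is needed for this one. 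If you insist on the decomposition-first route you must supply the missing positive-angle estimate, which is precisely where \cite{MV} invoke multivariable spectral theory.
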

The proof in \cite {MV} uses spectral theory, so does not apply directly in real Banach spaces.

\begin{theo} \label{mv-real}
Let $T$ and $S$ be commuting power-bounded operators on a real or complex Banach space $X$.
Then the following are equivalent:

(i) $(I-T)X+(I-S)X$ and $(I-T^*)X^*+ (I-S^*)X^*$ are closed in $X$ and $X^*$, respectively.

(ii) $\displaystyle{ \frac1{nm}\sum_{k=0}^{n-1}\sum_{j=0}^{m-1} T^kS^j}$ converges in 
operator-norm, as $\min(n,m) \to \infty$.

(iii) $\displaystyle{ \frac1{n^2}\sum_{k=0}^{n-1}\sum_{j=0}^{n-1} T^kS^j}$ converges in 
operator-norm, as $n \to \infty$.
\end{theo}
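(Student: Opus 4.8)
The plan is to establish the cycle $(i)\Rightarrow(ii)\Rightarrow(iii)\Rightarrow(i)$. The implication $(ii)\Rightarrow(iii)$ is immediate: restricting the operator-norm convergence in $(ii)$ to the diagonal $m=n$, along which $\min(n,m)=n\to\infty$, gives $(iii)$. For $(i)\Rightarrow(ii)$ I would invoke Theorem \ref{mv}. When $X$ is complex this is Theorem \ref{mv} itself; when $X$ is real, one passes to a complexification $X_{\mathbb C}$. The routine facts to verify are that $T,S$ extend to commuting power-bounded operators $T_{\mathbb C},S_{\mathbb C}$ on $X_{\mathbb C}$; that $(I-T_{\mathbb C})X_{\mathbb C}+(I-S_{\mathbb C})X_{\mathbb C}$ is exactly the complexification of the real subspace $(I-T)X+(I-S)X$, hence is closed in $X_{\mathbb C}$ whenever that subspace is closed in $X$ (the real and imaginary parts of a convergent sequence in $X_{\mathbb C}$ converge in $X$); and, under the identification $(X_{\mathbb C})^*\cong(X^*)_{\mathbb C}$ with $(T_{\mathbb C})^*$ corresponding to $(T^*)_{\mathbb C}$, that the same applies on the dual spaces. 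Thus $(i)$ for $X$ yields $(i)$ for $X_{\mathbb C}$, Theorem \ref{mv} applies on $X_{\mathbb C}$, and since $\frac1{nm}\sum_{k,j}T_{\mathbb C}^kS_{\mathbb C}^j=\big(\frac1{nm}\sum_{k,j}T^kS^j\big)_{\mathbb C}$, restricting the operator-norm limit back to $X$ gives $(ii)$ for $X$.

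The heart of the proof is $(iii)\Rightarrow(i)$, which I would carry out directly, with no spectral theory, so that it applies uniformly over $\mathbb R$ and $\mathbb C$. Put $M_n:=\frac1n\sum_{k=0}^{n-1}T^k$ and $N_n:=\frac1n\sum_{j=0}^{n-1}S^j$, so $(iii)$ reads $M_nN_n\to E$ in operator norm. From $TM_n=M_n+\tfrac1n(T^n-I)$, the bound $\|\tfrac1n(T^n-I)\|\to 0$ and power-boundedness of $S$, one gets $TE=ET=SE=ES=E$; hence $E$ is idempotent with range $F(T)\cap F(S)$. Writing $I-M_nN_n=(I-M_n)+M_n(I-N_n)$ and using $I-M_n=(I-T)P_n$ and $M_n(I-N_n)=(I-S)M_nR_n$, where $P_n,R_n$ are the obvious averages of powers of $T$, resp.\ $S$, one identifies $\ker E=\overline{(I-T)X+(I-S)X}=:Y$, a closed, $T$- and $S$-invariant subspace, and $X=\bigl(F(T)\cap F(S)\bigr)\oplus Y$. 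On $Y$ we have $M_nN_n\to 0$ in operator norm, so $I-M_nN_n$ is invertible on $Y$ for all large $n$; therefore
$$
Y=(I-M_nN_n)Y\subseteq(I-M_n)Y+M_n(I-N_n)Y\subseteq(I-T)Y+(I-S)Y\subseteq(I-T)X+(I-S)X\subseteq Y,
$$
which forces $(I-T)X+(I-S)X=Y$ to be closed. Finally, taking adjoints in $(iii)$: $T^*,S^*$ are again commuting power-bounded operators, $(M_nN_n)^*=\frac1{n^2}\sum_{k,j}(T^*)^k(S^*)^j$, and $\|(M_nN_n)^*-E^*\|=\|M_nN_n-E\|\to 0$, so $(iii)$ holds for the pair $(T^*,S^*)$ on $X^*$; applying the argument just given to $X^*$ shows $(I-T^*)X^*+(I-S^*)X^*$ is closed. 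This establishes $(i)$.

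I expect the main obstacle to be organizational rather than deep. The whole reason the appendix is needed is that the proof of Theorem \ref{mv} in \cite{MV} uses spectral theory valid only over $\mathbb C$, so one must carefully check that \emph{each} closedness condition in $(i)$ -- both for $X$ and for $X^*$ -- transfers to the complexification, and that operator-norm convergence descends from $X_{\mathbb C}$ back to $X$; these are standard but must be invoked with care. The one genuinely new point, which drives $(iii)\Rightarrow(i)$, is the observation that operator-norm smallness of $M_nN_n$ on $Y$ already makes $I-M_nN_n$ invertible there, turning the ergodic hypothesis into the purely algebraic identity $(I-T)X+(I-S)X=Y$ without any recourse to the joint spectrum.
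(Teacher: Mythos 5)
Your proposal is correct and follows essentially the same route as the paper: (ii)$\Rightarrow$(iii) trivially, (i)$\Rightarrow$(ii) via the Taylor complexification and Theorem \ref{mv}, and (iii)$\Rightarrow$(i) by the same key observation that operator-norm smallness of $\frac1{n^2}\sum_{k,j}T^kS^j$ on $Y=\overline{(I-T)X+(I-S)X}$ makes $I_Y$ minus this average invertible, so that the identity $I-T^kS^j=(I-T^k)+T^k(I-S^j)$ forces $Y\subset (I-T)X+(I-S)X$. Your explicit adjoint step for the dual half of (i) and your identification of the limit $E$ as the projection onto $F(T)\cap F(S)$ along $Y$ merely make explicit what the paper leaves implicit (the paper instead keeps track of the commuting solvents $C,D$ because it needs them for Corollary \ref{notin-sp}).
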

\begin{proof}
Obviously (ii) implies (iii). 
Assume (iii), and put $Y:=\overline{(I-T)X+(I-S)X}$. Then  it 
is easy to compute that ${ \frac1{n^2}\sum_{k=0}^{n-1}\sum_{j=0}^{n-1} T^kS^jz \to 0}$ 
for $z \in Y$ (using power-boundedness), and by (iii), the restrictions to $Y$ satisfy
$\|\frac1{n^2}\sum_{k=0}^{n-1}\sum_{j=0}^{n-1} T^kS^j\|_Y \to 0$.
Hence, for $n$ large enough
$$
A_n:= I_Y - \frac1{n^2} \sum_{k=0}^{n-1}\sum_{j=0}^{n-1} T^kS^j=
 \frac1{n^2} \sum_{k=0}^{n-1}\sum_{j=0}^{n-1}(I- T^kS^j)
$$
is invertible  on $Y$. Since $(I-T^k)(I-S^j) =I-T^k +I-S^j -(I-T^kS^j)$, on $Y$ we have
$$
A_n=\frac1n\sum_{k=0}^{n-1}(I-T^k) +\frac1n\sum_{j=0}^{n-1}(I-S^j) -
\frac1{n^2} \sum_{k=0}^{n-1} \sum_{j=0}^{n-1} (I-T^k)(I-S^j).
$$
%Let $y \in Y$, and 
Fix $n$ large. Denote by $A$ the restriction to $Y$ of $A_n$ and $B=A^{-1}$ (defined on $Y$).
By the Neumann series expansion, $B$ is in the closed subalgebra  of $B(Y)$ generated by the 
restrictions of $T$ and $S$ to $Y$. These restrictions satisfy
%$$
%I_Y = B \frac1n\sum_{k=0}^{n-1}(I-T^k) + B\frac1n\sum_{j=0}^{n-1}(I-S^j) -
%B\frac1{n^2} \sum_{k=0}^{n-1} \sum_{j=0}^{n-1} (I-T^k)(I-S^j)=
%$$
$$
I_Y =BA = B\cdot \frac1n \sum_{k=0}^{n-1}\sum_{\ell=0}^{k-1}T^\ell
\big[I_Y -\frac1n\sum_{j=0}^{n-1}(I_Y-S^j)\big](I_Y-T) +
B\cdot \frac1{n} \sum_{j=0}^{n-1}\sum_{\ell=0}^{j-1} S^\ell(I_Y-S).
$$
Thus we have operators $C$ and $D$ in $B(Y)$, commuting with the restrictions $T_Y$ and $S_Y$, 
such that $I_Y=(I-T_Y)C+(I-S_Y)D$. Hence 
$$
Y =(I-T_Y)Y + (I-S_Y)Y \subset (I-T)X+(I-S)X \subset Y,
$$
%Then by invertibility of $A_n$ on $Y$ there exists $z \in Y$ such that $A_nz=y$, and we obtain
%$$
%y=A_n z =
%\frac1n\sum_{k=0}^{n-1}(I-T^k)z +\frac1n\sum_{j=0}^{n-1}(I-S^j)z-
%\frac1{n^2} \sum_{k=0}^{n-1} \sum_{j=0}^{n-1} (I-T^k)(I-S^j)z.
%$$
%The first and last terms of the right hand side are in $(I-T)X$, the middle one is in $(I-S)X$.
%Thus $Y \subset (I-T)X + (I-S)X \subset Y$. Since  (iii) implies that also
%$\frac1{n^2}\sum_{k=0}^{n-1}\sum_{j=0}^{n-1} T^{*k}S^{*j}$ converges in operator-norm 
%(on $X^*$), applying the above to $T^*$ and $S^*$ yields that $(I-T^*)X^*+(I-S^*)X^*$ is closed,
so (i) holds. Note that this proof is valid for $X$ real or complex, and  unlike \cite{MV}, 
spectral theory is not used.
\medskip

When $X$ is complex, (i) implies (ii) by \cite{MV}. To prove that (i) implies (ii) when 
$X$ is real we will use the complexification of $X$, described below, and deduce the 
result from the complex case. We define $X_C=X\oplus X$, with the identification 
$(x,y) =x+iy$ which allows the definition of the multiplication by complex scalars. 
On $X_C$ we define the {\it Taylor norm} (see \cite[Proposition 3]{MST})
$$
\|(x,y)\|_T:=\sup_{0\le t \le 2\pi} \|x\cos t -y \sin t\| = 
\sup_{\phi \in X^*, \|\phi\|\le 1} \sqrt{\phi(x)^2 +\phi(y)^2}.
$$
In the sequel we write $\|(x,y)\|$ for $\|(x,y)\|_T$. Note that $\|(x,0)\|=\|x\|$ and
$\|(x,-y)\|=\|(x,y)\|$. Clearly 
$$
\max\{\|x\|,\|y\|\} \le \|(x,y)\| \le \sqrt{\|x\|^2+\|y\|^2} \le \|x\|+\|y\|,
$$
which shows that $\{(x_k,y_k)\}$ converges if and only if both $\{x_k\}$ and $\{y_k\}$ 
converge.  Given an operator $T$ on $X$, we extend it to $X_C$ by $T_C(x,y)=(Tx,Ty)$. By 
\cite[Proposition 4]{MST} $\|(T_C)^n\| = \|(T^n)_C\|=\|T^n\|$, so $T_C$ is 
power-bounded when $T$ is.

Assume now that $T$ and $S$  on $X$ satisfy (i). If $(I-T_C)(x_k,y_k) +(I-S_C)(u_k,v_k)$
converges in $X_C$ to $(z,w)$, computations by the definitions yield that 
$(z,w) \in (I-T_C)X_C+(I-S_C)X_C$, so $(I-T_C)X_C +(I-S_C)X_C$ is closed. 

By \cite[Proposition 7]{MST}, $(X_C)^*$ yields a {\it reasonable} complexification of $X^*$,
which by \cite[Proposition 3]{MST} is equivalent to the Taylor complexification of $X^*$.
It is therefore easy to check that $(T_C)^*=(T^*)_C$, and the condition $(I-T^*)X^*+(I-S^*)X^*$
closed implies that $(I-T_C\,^*)X^*+(I-S_C\,^*)X^*$ is closed. Hence $T_C$ and $S_C$ on $X_C$
satisfy (i), so by Theorem \ref{mv} $\frac1{mn} \sum_{k=0}^{n-1}\sum_{j=0}^{m-1}T_C^kS_C^j$
converges in operator norm on $X_C$, which implies (ii) of our theorem.
\end{proof}

The above proof of (iii) implies (i) yields the following corollary.

\begin{cor} \label{notin-sp}
Let $T$ and $S$ be commuting power-bounded operators on a {\em complex} Banach space $X$.
If $\displaystyle{ \frac1{n^2}\sum_{k=0}^{n-1}\sum_{j=0}^{n-1} T^kS^j}$ converges in 
operator-norm, as $n \to \infty$, then $Y:=(I-T)X+(I-S)X$ is closed, and for any 
subalgebra $\mathcal A \subset B(Y)$ containing the restrictions $T_Y$ and $S_Y$, 
the point $(1,1)$ is not in the Harte joint spectrum $\sigma_{\mathcal A}(T_Y,S_Y)$.
\end{cor}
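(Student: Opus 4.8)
The plan is to reuse, essentially verbatim, the argument by which (iii) implies (i) in the proof of Theorem \ref{mv-real}, while keeping track of the subalgebra in which the operators constructed there live. Set $Y:=\overline{(I-T)X+(I-S)X}$, which is invariant under $T$ and $S$, and write $T_Y,S_Y$ for the restrictions to $Y$.

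First I would check that the averages $\frac1{n^2}\sum_{k=0}^{n-1}\sum_{j=0}^{n-1}T^kS^j$ annihilate $Y$. On the dense subspace $(I-T)X+(I-S)X$ this is immediate: for $z=(I-T)x$ one has
$$
\frac1{n^2}\sum_{k=0}^{n-1}\sum_{j=0}^{n-1}T^kS^jz=\frac1{n^2}\sum_{j=0}^{n-1}S^j(I-T^n)x ,
$$
whose norm is $O(1/n)$ by power-boundedness, and symmetrically for $z=(I-S)y$. Since by hypothesis these averages converge in operator norm on $X$, hence are uniformly bounded, an approximation argument gives $\frac1{n^2}\sum_{k,j}T^kS^jz\to0$ for every $z\in Y$, so $\bigl\|\frac1{n^2}\sum_{k,j}T^kS^j\bigr\|_{B(Y)}\to0$. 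Consequently, for all large $n$ the operator
$$
A_n:=I_Y-\frac1{n^2}\sum_{k=0}^{n-1}\sum_{j=0}^{n-1}T^kS^j
$$
is invertible in $B(Y)$, and its inverse $B_n:=A_n^{-1}$, given by the Neumann series $\sum_{m\ge0}\bigl(\frac1{n^2}\sum_{k,j}T^kS^j\bigr)^m$, lies in the closed subalgebra of $B(Y)$ generated by $T_Y$ and $S_Y$.

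Next comes the algebraic step, carried out exactly as in the proof of Theorem \ref{mv-real}: using $I-T^kS^j=(I-T^k)+(I-S^j)-(I-T^k)(I-S^j)$ together with $I-T^k=(I-T)\sum_{\ell=0}^{k-1}T^\ell$ and $I-S^j=(I-S)\sum_{\ell=0}^{j-1}S^\ell$, one rewrites $A_n=(I-T_Y)P_n+(I-S_Y)Q_n$ with $P_n,Q_n$ polynomials in $T_Y$ and $S_Y$. Multiplying on the left by $B_n$ and using that $T_Y$ and $S_Y$ commute (so the commutative closed algebra they generate commutes with $I-T_Y$ and $I-S_Y$) yields
$$
I_Y=(I-T_Y)C+(I-S_Y)D=C(I-T_Y)+D(I-S_Y),
$$
with $C:=B_nP_n$ and $D:=B_nQ_n$ both in the closed subalgebra generated by $T_Y$ and $S_Y$. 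Applying the first identity to an arbitrary $z\in Y$ gives $z=(I-T)(Cz)+(I-S)(Dz)\in(I-T)X+(I-S)X$, so $(I-T)X+(I-S)X=Y$ is closed; and since $C,D$ lie in any subalgebra $\mathcal A\subset B(Y)$ containing $T_Y,S_Y$, the two displayed identities exhibit solutions of equation (\ref{left}) and of equation (\ref{right}) at $(\lambda_1,\lambda_2)=(1,1)$ with coefficients in $\mathcal A$; hence $(1,1)\notin\sigma_{\mathcal A}(T_Y,S_Y)$.

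The $O(1/n)$ estimate and the telescoping bookkeeping are routine and already appear in the paper, so the real content is organizational. The one genuinely delicate point is tracking the subalgebra: one must ensure the inverse $B_n$, and therefore $C$ and $D$, stays inside the prescribed $\mathcal A$, which is exactly why $B_n$ is written as a Neumann series in $T_Y,S_Y$; and one must note that the commutativity of $T_Y$ and $S_Y$ lets a single factorization of $I_Y$ serve simultaneously as a solution of both the ``left'' and the ``right'' Harte equations. Apart from that, the corollary is a direct transcription of the (iii)$\Rightarrow$(i) part of Theorem \ref{mv-real}.
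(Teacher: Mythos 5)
Your proposal is correct and is essentially the paper's own argument: the paper proves the corollary precisely by re-reading the (iii)$\Rightarrow$(i) step of Theorem \ref{mv-real}, where the inverse of $A_n$ on $Y$ is obtained by a Neumann series and hence lies, together with the resulting operators $C$ and $D$ in $I_Y=(I-T_Y)C+(I-S_Y)D$, in the (closed) subalgebra generated by $T_Y$ and $S_Y$, so both Harte equations at $(1,1)$ are solvable in $\mathcal A$ and $Y$ is closed. Your bookkeeping of the subalgebra and the use of commutativity to get the left and right equations simultaneously matches the paper's reasoning.
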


{\bf Remark.}  The result of Corollary \ref{notin-sp} for the Taylor spectrum was deduced in 
\cite[Lemma 4]{MV} from the spectral mapping theorem \cite{Ta1}; the same proof could apply 
also for the Harte spectrum (in $Y$), by \cite[Theorem 4.3]{Ha}.  Our proof for the Harte 
spectrum is simpler, since it uses only its definition \cite{Ha}.

\bigskip

{\bf Acknowledgements.} The authors are grateful to J.P. Conze, C. Cuny and Y. Derriennic 
for their helpful comments, to I. Assani for discussion of the maximal spectral type,
and to D. Markiewicz for discussion of the different joint spectra.

The authors thank the referee for detecting a gap in the original proof of Theorem \ref{mpt}.

%\newpage

\end{document}